\newtheorem{theorem}{Theorem}[section]
\newtheorem{lemma}[theorem]{Lemma}
\newtheorem{corollary}[theorem]{Corollary}
\newtheorem{proposition}[theorem]{Proposition}
\theoremstyle{definition}
\newtheorem{definition}[theorem]{Definition}
\newtheorem{remark}[theorem]{Remark}
\date{\today}
\begin{document}
		\title[Universal minima of discrete potentials for sharp spherical codes]{Universal minima of discrete potentials for sharp spherical codes}
	\author[P. G. Boyvalenkov]{P. G. Boyvalenkov$^1$} \thanks{$^1$The research of this author was supported, in part, 
by Ministry of Education and Science of Bulgaria under Grant no. DO1-387/18.12.2020 “National Centre for High-Performance and Distributed Computing”.}
\address{ Institute of Mathematics and Informatics, Bulgarian Academy of Sciences \\
8 G Bonchev Str., 1113  Sofia, Bulgaria}
\email{peter@math.bas.bg}

\author[P. D. Dragnev]{P. D. Dragnev$^2$}\thanks{$^2$The research of this author was supported, in part, by NSF grant DMS-1936543.}
\address{ Department of Mathematical Sciences, Purdue University \\
Fort Wayne, IN 46805, USA }
\email{dragnevp@pfw.edu}

\author{D. P. Hardin}
\address{ Center for Constructive Approximation, Department of Mathematics \\
Vanderbilt University, Nashville, TN 37240, USA }
\email{doug.hardin@vanderbilt.edu}

\author{E. B. Saff}
\address{ Center for Constructive Approximation, Department of Mathematics \\
Vanderbilt University, Nashville, TN 37240, USA }
\email{edward.b.saff@vanderbilt.edu}

\author[M. M. Stoyanova]{M. M. Stoyanova$^3$} \thanks{$^3$The research of this author was supported, in part, by Bulgarian NSF grant KP-06-N32/2-2019.}
\address{ Faculty of Mathematics and Informatics, Sofia University ``St. Kliment Ohridski"\\
5 James Bourchier Blvd., 1164 Sofia, Bulgaria}
\email{stoyanova@fmi.uni-sofia.bg}
	 \begin{abstract}  
This article is devoted to the study of discrete potentials on the sphere in $\mathbb{R}^n$ for sharp codes. We show that the potentials of most
of the known sharp codes attain the universal lower bounds for polarization for spherical $\tau$-designs previously derived by the authors, where ``universal'' is meant  in the sense of applying to a large class of potentials that includes absolutely monotone functions of inner products. We also extend our universal bounds to $T$-designs and the associated polynomial subspaces determined by the vanishing moments of spherical configurations and thus obtain the minima for the icosahedron, dodecahedron, and sharp codes coming from $E_8$ and the Leech lattice. For this purpose we investigate quadrature formulas for certain subspaces of Gegenbauer polynomials $P^{(n)}_j$ which we call PULB subspaces, particularly those having basis $\{P_j^{(n)}\}_{j=0}^{2k+2}\setminus \{P_{2k}^{(n)}\}.$ Furthermore, for potentials with $h^{(\tau+1)}<0$  we prove that the strong sharp codes and the antipodal sharp codes attain the universal bounds and their minima occur at points of the codes. The same phenomenon is established for the $600$-cell when the potential $h$ satisfies $h^{(i)}\geq 0$, $i=1,\dots,15$, and $h^{(16)}\leq 0.$

\end{abstract}

 		\maketitle

{\bf Keywords}: {\it Discrete potentials, sharp spherical configurations, linear programming, Gauss-Jacobi  quadrature, universal bounds on polarization of codes}.
\vskip 3mm
{\bf MSC 2020}: {\it 05B30, 52C17, 74G65,  94B65; 05E30, 33C45, 52A40}

\section{Introduction}

Let $\mathbb{S}^{n-1}\subset \mathbb{R}^n$ denote the unit sphere. A collection of distinct points $C=\{x_1,x_2,\ldots,x_N\} \subset   \mathbb{S}^{n-1}$ is called {\em a spherical code}. 
For a function $h:[-1,1] \to [0,+\infty]$, continuous and finite on $[-1,1)$, we consider the {\em discrete $h$-potential associated with $C$}
\[ U_h(x,C):=\sum_{y\in C}h(x \cdot y),\]
where $x \in \mathbb{S}^{n-1}$ is arbitrary.  For a fixed cardinality of code and a fixed potential {\em polarization} is an optimization problem where we seek codes that attain the largest possible minimum (i.e. max-min) or the smallest maximum (min-max) on $\mathbb{S}^{n-1}$. In a recent work \cite{BDHSS_P} we derived polarization universal lower and upper bounds (PULB and PUUB)  for spherical codes and designs and related results. Here we continue this investigation by analyzing the minima of potentials generated by sharp codes, which we show attain our PULB. Furthermore, we generalize our previous work by developing what we call {\em second level polarization bounds}. 

Before introducing the polarization notions we remind the reader of the definition of spherical designs introduced in 1977 by Delsarte, Goethals and Seidel 
\cite{DGS} (for comprehensive surveys see \cite{Ban09,Ban17,BBIT}). 

\begin{definition} \label{def-designs-1} A spherical $\tau$-design $C
\subset \mathbb{S}^{n-1}$ is a finite subset of $\mathbb{S}^{n-1}$
such that
\[ \frac{1}{\mu(\mathbb{S}^{n-1})} \int_{\mathbb{S}^{n-1}} f(x) d\mu(x) =
                  \frac{1}{|C|} \sum_{x \in C} f(x) \]
($\mu$ is the surface area measure and $|C|$ denotes cardinality) holds for all polynomials $f(x)
= f(x_1,x_2,\ldots,x_n)$ of degree at most $\tau$ (i.e., the
average of $f$ over the set $C$ is equal to the average of $f$ over
$\mathbb{S}^{n-1}$). 
\end{definition}

This was extended to the concept of spherical $T$-designs by Delsarte and Seidel \cite{DS89} in 1989 (see also \cite[Section 6.1]{Ban17}). 
Given a spherical code $C  \subset   \mathbb{S}^{n-1}$, its {\em $i$-th moment}, $i\in \mathbb{N}$, is defined as
\begin{equation}\label{Moments}
M_i^n(C):=\sum_{x,y\in C} P_i^{(n)} (x\cdot y),
\end{equation}
where $P_i^{(n)} (t)$ are the Gegenbauer polynomials, normalized by $P_i^{(n)} (1)=1$. We remark that one of the alternative 
definitions for a spherical $\tau$-design is that all the moments $M_i^n(C)$, $i=1,\dots,\tau$, vanish.  

\begin{definition}\cite[Definition 6.1]{Ban17} \label{T-design}
Given an {\em index set} $T\subset \mathbb{N}$, we call a spherical code $C \subset \mathbb{S}^{n-1}$ an {\em $T$-design} if $ M_i^n(C)=0$ 
for every $i\in T$. 
\end{definition}

The polarization quantities we are going to investigate are defined as follows. Let 
\begin{equation} \label{max-min}
\mathcal{Q}_h(C):=\inf_{x \in \mathbb{S}^{n-1}} U_h(x,C), \ \ \mathcal{Q}_{h}(n,N):=\sup_{|C|=N, C\subset \mathbb{S}^{n-1}} \mathcal{Q}_h(C).
\end{equation}
Then the max-min and min-min polarization quantities for $T$-designs are defined as
\begin{equation} \label{Q-T-def}
\overline{\mathcal{Q}}_{h}(n ,N,T):=\sup \{ \mathcal{Q}_h(C)\, : \, |C|=N, \, C \ {\rm is\ a }\ T{\mbox -}{\rm design} \},
\end{equation}
and
\begin{equation} \label{Q-T-defUnder}
\underline{\mathcal Q}_h(n,N,T):=\inf \{\mathcal{Q}_h(C): |C|=N, \text{$C$ is a $T$-design}\}. 
\end{equation}
 Clearly, we have $\overline{\mathcal{Q}}_{h}(n ,N,T)\geq  \underline{\mathcal Q}_h(n,N,T)$. When $T=\{1,2,\ldots,\tau\}$ we write $\tau$ instead of $T$ in the above notations. 

When considering spherical codes, the {\em Gegenbauer polynomials} $P_i^{(n)} (t)=P_i^{(0,0)}(t)$ play an intricate role. We remind the reader that these are Jacobi\footnote{The Jacobi polynomials 
$P_i^{\alpha,\beta}(t)$ are orthogonal on $[-1,1]$ with respect to a weight function $(1-t)^\alpha (1+t)^\beta$.} polynomials $P_i^{\alpha,\beta}(t)$ 
with parameters
$\alpha=\beta=(n-3)/2$ normalized so that $P_i^{(n)} (1)=1$. Namely, the Gegenbauer polynomials $P_i^{(n)}(t)$ 
are orthogonal on $[-1,1]$ with orthogonality measure 
\[ d\mu_n(t):=\gamma_n (1-t^2)^{(n-3)/2}\, dt , \]
where the normalization constant $\gamma_n$ is chosen to make $\mu_n$ a probability measure. Recall that the {\em adjacent Gegenbauer polynomials} $P_i^{(a,b)} (t)$, $a,b\in\{0,1\}$, are Jacobi polynomials with 
$\alpha=a+(n-3)/2$ and $\beta=b+(n-3)/2$ similarly normalized by $P_i^{(a,b)} (1)=1$. Any real polynomial $f$ can be written as
\begin{equation} \label{geg-exp} 
f(t)=\sum_{i=0}^{\deg(f)} f_i P_i^{(n)}(t) 
\end{equation}
with Gegenbauer coefficients $f_i$ given by
\[f_i := \int_{-1}^1 f(t) P_i^{(n)}(t)\, d \mu_n(t)/\|P_i^{(n)}\|^2, \quad i=0,\dots, \deg(f). \]
We note that Gegenbauer polynomials are even/odd functions for even/odd $i$. 

We recall the polarization universal bounds derived in \cite{BDHSS_P}. The following equivalent definition of a spherical design 
facilitates our approach to the polarization problem (see \cite{DGS}, \cite[Equation (1.10)]{FL}).

\begin{definition} \label{def-des}  A code
$C \subset \mathbb{S}^{n-1}$ is a spherical $\tau$-design if and
only if for any point $x \in \mathbb{S}^{n-1}$ and any real 
polynomial $f(t)$ of degree at most $\tau$, the equality
\begin{equation}
\label{defin_f}
U_f(x,C)=\sum_{y \in C}f(x \cdot y) = f_0|C|
\end{equation}
holds, where $f_0=\int_{-1}^1 f(t) \, d \mu_n(t)$ is the constant coefficient in the Gegenbauer expansion  \eqref{geg-exp} of $f$. Similarly, $C$ is a spherical $T$-design if and only if \eqref{defin_f} holds for any $f\in \mathcal{P}_T$ (see Proposition \ref{prop23}), where
\begin{equation}\label{P_T}
\mathcal{P}_T:=\text{span } \{P_i^{(n)} (t):i\in T\cup \{0\}\}.
\end{equation}
\end{definition}

 For $x\in \mathbb{S}^{n-1}$ and a code $C \subset \mathbb{S}^{n-1}$, let $I(x,C):=\{x\cdot y :y\in C\}=\{u_i\}_{i=1}^\ell$ and let $r_i$ denote the relative    frequency of occurrence of $u_i$; i.e., $u_i = x\cdot y$ for $|C|r_i$ many distinct $y\in C$.  Observe that 
\begin{equation} \label{QRcode}
U_f(x,C)=\sum_{y \in C}f(x \cdot y) = |C|\sum_{i=1}^\ell r_i f(u_i).
\end{equation}
 
Note that \eqref{defin_f} asserts that the polarization $f$-potential for $C$ is constant on ${\mathbb S}^{n-1}$. This fact serves as the foundation in obtaining lower and upper linear programming (LP) bounds for polarization. In this article we are interested in lower bounds, so we  summarize the following PULB result from \cite{BDHSS_P}.

\begin{theorem}\label{PULB}  {\rm (\cite[Theorems 3.4 and 3.7, Corollaries 3.9 and 3.10]{BDHSS_P})} Suppose $C$ is   a spherical $\tau$-design of cardinality $N$ on $\mathbb{S}^{n-1}$, where $\tau=:2k-1+\epsilon$, $\epsilon\in \{0,1\}$, and that the potential $h$ is continuous on $[-1,1]$, finite on $(-1,1)$,
and has a derivative $h^{(2k+\epsilon)}$ of constant sign on $(-1,1)$.
Then 
\begin{equation}\label{PolarizationULB}
{Q}_h(C) \geq N \sum_{i\in I} \rho_i h(\alpha_i),
\end{equation}
where the index set $I$, the quadrature nodes $\{\alpha_i\}_{i\in I} $, and the positive weights $\{\rho_i\}_{i\in I}$   are determined as follows:
\begin{itemize}
\item[] (i) When $h^{(\tau+1)}(t)\geq 0$ on $(-1,1)$, then $I:=\{1-\epsilon,\dots,k\}$ and $\{\alpha_j\}_{j\in I}$ are the zeros of the (possibly adjacent) Gegenbauer polynomials $(1+t)^\epsilon P_k^{(0,\epsilon)}(t)$;
\item[] (ii) When $h^{(\tau+1)}(t)\leq 0$ on $(-1,1)$, then $I=\{0,1,\dots,k\}$ and $\{\alpha_j\}_{j\in I}$ are the zeros of the polynomials $(t-1)(t+1)^{1-\epsilon}P_{k-1+\epsilon}^{1,1-\epsilon}(t)$.
\end{itemize}
The weights  are positive, sum to 1, and are  given by
\begin{equation} \label{RhoWeights}
\rho_i:= \int_{-1}^1 \ell_i (t) \, d \mu_n(t)=\int_{-1}^1 \ell_i^2 (t) \, d \mu_n(t),
\end{equation} where
$\ell_i(t)$ denotes the Lagrange basic polynomials\footnote{ $\ell_i(\alpha_j)=\delta_{ij}$, the Kronecker delta.} associated with the nodes $\{ \alpha_j \}_{j=1-\epsilon}^k$.

Moreover, the bound \eqref{PolarizationULB} is the best that can be attained by linear programming via polynomials $f$
of degree at most $\tau$ for which $f\leq h$ on $[-1,1]$.
 
In addition, if a spherical $\tau$-design $C$ attains the bound \eqref{PolarizationULB}, then
there exists a point $\widetilde{x} \in \mathbb{S}^{n-1}$ such that the set $I(\widetilde{x},C)$ of all inner products between $\widetilde{x}$ and the points of $C$ coincides with the set
 $\{\alpha_i\}_{i\in I}$, and the multiplicities of these inner products are $\{N\rho_i \}_{i\in I}$, respectively. In particular, the numbers $N\rho_i$, $i\in I$,
are positive integers.
\end{theorem}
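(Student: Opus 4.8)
The plan is to obtain \eqref{PolarizationULB} as a linear programming bound and to certify its optimality through an explicit Gauss--Jacobi quadrature for $\mu_n$. The starting point is the design characterization in Definition~\ref{def-des}: for every polynomial $f$ with $\deg f\le \tau$ one has $U_f(x,C)=f_0N$ for all $x\in\mathbb{S}^{n-1}$, so the $f$-potential is constant. Hence whenever $f\le h$ on $[-1,1]$ we get $U_h(x,C)\ge U_f(x,C)=f_0N$ pointwise, and taking the infimum over $x$ gives $\mathcal{Q}_h(C)\ge f_0N$. The task therefore reduces to maximizing $f_0=\int_{-1}^1 f\,d\mu_n$ over polynomials $f$ of degree at most $\tau$ with $f\le h$ on $[-1,1]$, and to producing the maximizer explicitly.

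For each sign case I would introduce the interpolatory quadrature
\[
\int_{-1}^1 p\,d\mu_n=\sum_{i\in I}\rho_i\,p(\alpha_i),
\]
whose nodes are the zeros of the prescribed (adjacent) Gegenbauer polynomial. According as zero, one, or two endpoints are forced to be nodes, these are the classical Gauss, Gauss--Radau, and Gauss--Lobatto rules for $\mu_n$, the endpoint pattern being exactly the one dictated by $\epsilon$ and by which of (i), (ii) holds. A degree count shows that in every case the rule is exact for all polynomials of degree at most $\tau$. The weights are the interpolatory weights $\rho_i=\int_{-1}^1\ell_i\,d\mu_n$; exactness applied to $\ell_i^2$ then gives the second equality in \eqref{RhoWeights}, which simultaneously yields positivity, while $\sum_i\rho_i=1$ follows by integrating the constant $1$.

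The heart of the matter is the construction of the extremal $f$ and the sign of the interpolation error. I would take $f$ to be the Hermite interpolant of $h$ matching $h$ at every node, with a double node at each interior $\alpha_i$ and a simple node at each forced endpoint; the multiplicities are arranged so that $\deg f=\tau$. With $\omega(t):=\prod_{i}(t-\alpha_i)^{m_i}$ the associated nodal polynomial, the Hermite remainder formula gives
\[
h(t)-f(t)=\frac{h^{(\tau+1)}(\xi_t)}{(\tau+1)!}\,\omega(t),\qquad \xi_t\in(-1,1).
\]
The decisive observation is that the prescribed endpoint factors make $\omega\ge 0$ on $[-1,1]$ in case (i), where $h^{(\tau+1)}\ge0$, and $\omega\le 0$ on $[-1,1]$ in case (ii), where $h^{(\tau+1)}\le0$: the interior double nodes contribute a perfect square, while the boundary factor $(1+t)^\epsilon$, respectively $(t-1)(t+1)^{1-\epsilon}$, supplies precisely the boundary sign needed. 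Hence $h-f\ge 0$, so $f$ is admissible, and because $\deg f=\tau$ the quadrature is exact on $f$, whence $f_0=\sum_i\rho_i f(\alpha_i)=\sum_i\rho_i h(\alpha_i)$. This proves \eqref{PolarizationULB}. Optimality among all degree-$\tau$ competitors is then immediate from the positivity of the weights, since any admissible $g$ satisfies $g_0=\sum_i\rho_i g(\alpha_i)\le\sum_i\rho_i h(\alpha_i)=f_0$.

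For the equality statement, suppose $C$ attains \eqref{PolarizationULB}, and let $\widetilde x$ minimize $U_h(\cdot,C)$. Then $U_h(\widetilde x,C)=f_0N=U_f(\widetilde x,C)$, so $\sum_{y\in C}\bigl(h(\widetilde x\cdot y)-f(\widetilde x\cdot y)\bigr)=0$ with every summand nonnegative; thus $h(\widetilde x\cdot y)=f(\widetilde x\cdot y)$ for all $y\in C$. Since the error $h-f$ vanishes only at the nodes, every inner product $\widetilde x\cdot y$ lies in $\{\alpha_i\}_{i\in I}$. Writing $n_i$ for the number of $y\in C$ with $\widetilde x\cdot y=\alpha_i$ and applying the design identity $U_p(\widetilde x,C)=Np_0=N\sum_i\rho_i p(\alpha_i)$ to $p=\ell_j$ (admissible since $\deg\ell_j\le\tau$) forces $n_j=N\rho_j$, so each $N\rho_i$ is a positive integer and every node is actually attained. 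The main obstacle I anticipate is the sign analysis in the third paragraph: one must choose the forced endpoint nodes so that $\omega$ matches the sign of $h^{(\tau+1)}$ while simultaneously keeping $\deg f=\tau$ and the quadrature exact to degree $\tau$ — it is this interlocking of the endpoint pattern, the degree count, and the remainder sign that makes cases (i) and (ii) emerge with the stated nodes.
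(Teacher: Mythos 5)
Your strategy is the right one, and it is in fact the same framework used in \cite{BDHSS_P}, where this theorem is actually proved (the present paper only quotes it): the design identity converts any $f\le h$ of degree at most $\tau$ into the bound $\mathcal{Q}_h(C)\ge Nf_0$; the extremal $f$ is the Hermite interpolant of $h$ at the nodes of the Gauss/Radau/Lobatto rule for $\mu_n$ (double interior nodes, simple endpoint nodes); the endpoint factors $(1+t)^\epsilon$, resp.\ $(t-1)(t+1)^{1-\epsilon}$, give the nodal polynomial exactly the sign that matches the sign of $h^{(\tau+1)}$; and the attainment analysis (every inner product of a minimizer must be a node because the summands of $U_h-U_f$ are nonnegative, with multiplicities $N\rho_j$ read off by applying the design identity to the Lagrange polynomials $\ell_j$) is as you describe. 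Those parts of your write-up are correct.

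One step fails as written, however: your proof of positivity of the weights and of the second equality in \eqref{RhoWeights} by ``exactness applied to $\ell_i^2$''. This argument is valid in case (i) and in the Radau sub-case of (ii) (where $\epsilon=1$), but not in the Lobatto sub-case of (ii) with $\epsilon=0$: there the rule has $k+1$ nodes ($\pm1$ together with the $k-1$ zeros of $P_{k-1}^{(1,1)}$), so $\deg \ell_i^2=2k$, while the rule is exact only up to degree $\tau=2k-1$, and the identity $\int_{-1}^1\ell_i^2\,d\mu_n=\rho_i$ is genuinely false. For instance, with $n=3$, $k=1$ the nodes are $\pm1$ and $\rho_0=\int_{-1}^1\ell_0\,d\mu_3=1/2$, whereas $\int_{-1}^1\ell_0^2\,d\mu_3=1/3$; this also shows that \eqref{RhoWeights} as quoted should be read as referring to the case (i) node set $\{\alpha_j\}_{j=1-\epsilon}^k$, consistent with its footnote. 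Positivity in the Lobatto case needs the separate classical argument: for an interior node test the rule on $(1-t^2)\prod_{j\ne i}(t-\alpha_j)^2$ (degree $2k-2$), and for the endpoint $-1$ test it on $(1-t)\prod_{j}(t-\alpha_j)^2$ over the interior nodes (degree $2k-1$); each has degree at most $\tau$, is nonnegative, and vanishes at every node but one, which forces the remaining weight to be positive. This hole matters because your LP-optimality paragraph uses precisely the positivity of the $\rho_i$. A secondary caveat: your claim that $h-f$ ``vanishes only at the nodes'' requires $h^{(\tau+1)}$ to be nonvanishing off the nodes, not merely of constant sign (if $h$ is itself a polynomial of degree $\le\tau$ the remainder vanishes identically and the multiplicity conclusion breaks down); this degeneracy is inherited from the statement as quoted, but you should make explicit the strictness you are invoking.
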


\begin{remark}\label{RemarkMinima}
We note that when a spherical $\tau$-design of cardinality $N$ on $\mathbb{S}^{n-1}$ exists, then Theorem~\ref{PULB} implies the following bounds \begin{equation}\label{PULB_N}
{\mathcal{Q}}_{h}(n,N) \geq  \underline{\mathcal Q}_h(n,N,\tau) \ge N\sum_{i\in I} \rho_i h(\alpha_i).
\end{equation}
\end{remark}

A by-product of Theorem \ref{PULB} is an alternative proof of the Fazekas-Levenshtein bound on the covering radius of spherical designs \cite[Theorem 2]{FL},
namely that the minimal (in terms of inner products) covering radius of a spherical $\tau$-design of cardinality $N$
(in fact, the Fazekas-Levenshtein bound does not depend on the cardinality) is at least as large as the 
largest quadrature node in \eqref{PolarizationULB}. In this regard, we showed in \cite{BDHSS_P} that the vertices of the cube on $\mathbb{S}^2$ and the $24$-cell on $\mathbb{S}^3$ both attain the bound \eqref{PolarizationULB} for potentials satisfying case (i), and as such the Fazekas-Levenshtein bound. The simplex and the cross-polytope on $ \mathbb{S}^{n-1}$ attain the PULB bound as well. In this article we show that the same is true for most of the known sharp codes.  However, prominent configurations, such as the icosahedron, dodecahedron, and the kissing configurations of  $E_8$, and the Leech lattice do not attain \eqref{PolarizationULB}. Thus, one of our goals is to enhance the PULB by considering polynomials $f$ with degree higher than $\tau$. As a consequence, we obtain stronger bounds, or second level PULB, which are attained by these codes.

In this paper we focus on {\em sharp codes} which are spherical $\tau$-designs which admit at most $[(\tau+1)/2]$ distinct 
inner products between their (distinct) points. All known sharp codes are listed in several papers, cf. \cite{Lev92,Lev,CK}, and no new sharp codes 
were discovered since 
1980's. We shall prove that all known sharp codes, except for the infinite family defined on the last row of Table 2, attain what we call first or second level PULB (the bounds \eqref{PolarizationULB} and
\eqref{level-2-bound}, respectively), and therefore the location of their minima are independent of the potential, and hence universal. The infinite family of the last row, along with the $600$-cell for potential satisfying case (i) of Theorem \ref{PULB}, will be considered in a future work.

This article is structured as follows. In Section \ref{prel} we introduce the needed preliminaries. Section \ref{pos-derivative} contains the first main result Theorem \ref{sharp-pos-level-1}, and is devoted to 
the analysis of the known sharp codes attaining the PULB \eqref{PolarizationULB} for potentials satisfying case (i) of Theorem \ref{PULB};
this is the first level PULB. 
We look into the structure of the corresponding codes and find suitable points to exhibit the universal minima.
In Section \ref{T-designs} we consider the general case of {\em spherical $T$-designs} for the case $T=\{1,2,\ldots,2k+2\} \setminus \{2k\}$
and present a Skip 1-Add 2 framework to enhance the max-min polarization bounds from Theorem \ref{PULB}.  We introduce the notion of PULB-space and prove that the polynomial subspace spanning the Gegenbauer polynomials of degree $i\in T\cup\{0\}$ is a PULB space in the second main result Theorem \ref{P_T PULB-space}. The 
second level PULB is given in Theorem \ref{level-2-T}.
Examples of codes attaining the new enhanced polarization bounds, namely the icosahedron, the dodecahedron, and the famous kissing configurations in $8$ and $24$ dimensions\footnote{M. Viazovska received a Fields medal in 2022: ``For the proof that the $E_8$ lattice provides the densest packing of identical spheres in 8 dimensions, and further contributions to related extremal problems and interpolation problems in Fourier analysis." - 
https://www.mathunion.org/imu-awards/fields-medal/fields-medals-2022}, are presented by Theorem 5.1 in Section \ref{sec-level2}. We enumerate the classification of the universal minima of the  known  sharp codes via the closest facet to such minima in Section \ref{classification}. Section \ref{neg-derivative} establishes that strongly sharp and antipodal sharp configurations attain the PULB \eqref{PolarizationULB} 
for potentials satisfying case (ii) of Theorem \ref{PULB}, as well as the minima for the $600$-cell is found for potentials $h$ that satisfy $h^{(i)}\geq 0$, $i=1,\dots,15$, and $h^{(16)}\leq 0$.

\section{Preliminaries} \label{prel}

\subsection{Quadrature rules and spherical designs}

\begin{definition}
\label{QuadRule}
Let $n$ be a positive integer and  $\Lambda$ a linear space of univariate polynomials that contains the constant polynomials.  We say that $\{\alpha_i\}_{i\in I}\subset [-1,1]$ and $\{\rho_i\}_{i\in I}\subset (0,1)$,  for some finite index set $I$,   form a {\em quadrature rule exact on $\Lambda$}   if
\begin{equation}\label{QR1}
f_0=\sum_{i\in I} \rho_i f(\alpha_i),\qquad f \in \Lambda.\end{equation} 
\end{definition}
If $C$ is a $T$-design, it follows from \eqref{defin_f} and \eqref{QRcode} that  $I(x,C)=\{\alpha_1,\ldots,\alpha_\ell\}$ with relative frequencies $\{\rho_1,\ldots, \rho_\ell\}$ form a quadrature rule that is exact on $\mathcal{P}_T$.

\subsection{The Delsarte-Goethals-Seidel bound, Levenshtein's $1/N$-quadrature rule and universal lower bounds (ULB) for energy}

The cardinality of spherical $\tau$-designs is bounded from below by the following Fisher-type bound, cf.
\cite[Theorems 5.11, 5.12]{DGS}. If $C\subset\mathbb{S}^{n-1}$ is a $\tau$-design, 
$\tau=2k-1+\epsilon$, $k \in \mathbb{N}$, $\epsilon \in \{0,1\}$, then 
\begin{equation}
\label{DGS-bound}
|C| \geq D(n,\tau):={n+k-2+\epsilon \choose n-1}+{n+k-2 \choose n-1}.
\end{equation}

The existence of $\tau$-designs on $\mathbb{S}^{n-1}$ with cardinality $N \geq D(n,\tau)$ is not guaranteed and Yudin \cite{Y-designs} showed
that the bound \eqref{DGS-bound} can be improved in some cases (see also \cite{BBD99}). On the other hand, Seymour and Zaslavsky \cite{SZ}
showed that there exists  $\tau$-designs on $\mathbb{S}^{n-1}$ with all large enough cardinalities, and Bondarenko, Radchenko, and Viazovska
\cite{BRV13,BRV15} solved a long-standing conjecture by proving that there exist spherical $\tau$-designs on $\mathbb{S}^{n-1}$ 
with all cardinalities $N \geq C_n \tau^{n-1}$, where $C_n$ depends only on the dimension $n$.

For every cardinality $N$, we let $\tau_{n,N}$ be the largest $\tau$ for which a spherical $\tau$-design of $N$ points on $\mathbb{S}^{n-1}$ exists. Then the definitions \eqref{max-min} and \eqref{Q-T-def} immediately imply the bound
\[ \mathcal{Q}_{h}(n,N)\geq \overline{\mathcal{Q}}_{h}(n,N,\tau_{n,N}). \]

Denote the maximal possible cardinality of a spherical code on $C\subset \mathbb{S}^{n-1}$ of prescribed maximal
inner product $s$ with
$$A(n,s):=\max\{|C| \colon C \subset \mathbb{S}^{n-1}, \langle x,y \rangle \leq s, \,   x\neq y \in C\}.$$
When introducing his bound on the quantity $A(n,s)$, Levenshtein utilized Gauss-type $1/N$-quadrature rules 
that we now briefly review (cf. \cite[Section 4]{Lev92}, \cite[Section 5]{Lev}). Given a
real number (possibly cardinality) $N$, there exists a unique $\tau=2k-1+\epsilon$, $\epsilon\in \{0,1\}$, such that  $ N \in (D(n,\tau),D(n,\tau+1)]$, where $D(n,\tau)$ are the Delsarte-Goethals-Seidel numbers \eqref{DGS-bound}. Let $\alpha_k=s$ be the maximal (unique in a certain interval) 
solution of the equation $N=L_{\tau}(n,t)$, where $L_{\tau}(n,t)$ is the Levenshtein function \cite[Section 5]{Lev}. Then there exist
uniquely determined quadrature nodes and nonnegative weights 
\[ -1 \leq \alpha_{1-\epsilon} < \cdots <\alpha_k < 1,\quad \rho_{1-\epsilon},\ldots,\rho_k \in \mathbb{R}^+, \]
such that the Radau/Lobatto $1/N$-quadrature holds:
\begin{equation} 
\label{defin_qf}
f_0= \frac{f(1)}{N}+\sum_{i=1-\epsilon}^{k} \rho_i f(\alpha_i), \ \ \mbox{ for all}\  f\in \mathcal{P}_{\tau},
\end{equation}
where $\mathcal{P}_\tau$ denotes the collection of polynomials of a single real variable of degree at most $\tau$.
When $\epsilon=1$, then $\alpha_0=-1$ and \eqref{defin_qf} is Lobatto quadrature, otherwise it is Radau quadrature.
The nodes $\alpha_i$, $i=1,\ldots,k$, are the roots of the equation
\[ P_k^{(\epsilon,1)}(t)P_{k-1}^{(\epsilon,1)}(\alpha_k) - P_{k}^{(\epsilon,1)}(\alpha_{k})P_{k-1}^{(\epsilon,1)}(t)=0,\label{op_eq1} \]
and the weights are found to determine the required accuracy.

\subsection{Sharp codes} We consider sharp codes, 
that is {\em spherical $\tau$-designs with $[(\tau+1)/2]$ distinct inner products among distinct points in the code}. Tables with all known sharp 
codes appeared earlier in the literature, as Levenshtein \cite[Table 9.1]{Lev92}, \cite[Table 6.2]{Lev} exhibited them as the all known codes 
attaining his upper bound on $A(n,s)$, and Cohn and Kumar \cite[Table 1]{CK} showed that they are universally optimal, i.e. they possess, 
for their dimension $n$ and cardinality $N$, the minimum possible $h$-energy for all absolutely monotone potentials $h$.   

All sharp codes attain the universal lower bound for $h$-energy as explained in the next subsection. In this paper 
we derive and explore one more aspect of the universality of the sharp codes by proving their optimality for Theorem 
\ref{PULB}, case (i). To this end, we exploit their combinatorial nature and analyze, case by case, how they attain the first and second level
polarization bounds \eqref{PolarizationULB} and \eqref{level-2-bound}, respectively.

We will use the notation $C=(n,N,\tau)$ for the sharp codes; that is, these are codes $C\subset \mathbb{S}^{n-1}$ of cardinality 
$|C|=N$ and design strength $\tau$ that have $[(\tau+1)/2]$ distinct inner products among distinct points in the code. If $\tau=2k$
(when $n \geq 3$, this may happen for $k=1$ and 2 only) then $C$ is called strongly sharp \cite{B}.

\subsection{Universal lower bound on energy of spherical codes}

As the sharp codes attain the analogous universal lower bound (ULB) for energy, it is beneficial to provide a comparison and the relevant context. In this subsection we briefly introduce the energy counterpart as developed in \cite{BDHSS}. 

It turns out that the Levenshtein's $1/N$-quadrature \eqref{defin_qf} plays an important role in bounding potential energy. Given a code $C\subset \mathbb{S}^{n-1}$ with cardinality $|C|=N$, the potential energy (or $h$-energy) of $C$ is defined as 
\[ E_h (n,C):=\sum_{x,y \in C,x\not= y} h(x\cdot y).\]
The optimization quantity 
\[ \mathcal{E}(n,N;h):=\inf_{|C|=N} E_h (n,C) \]
arises  in many areas such as crystallography, material science, information theory, etc. The following theorem holds.
\begin{theorem}[\cite{BDHSS}, Theorems 2.3 and 3.1]\label{THM_ULB}
Let $h$ be an absolutely monotone potential function on $[-1,1)$, $\{(\alpha_i, \rho_i)\}_{i=1}^{k}$ be the Levenshtein's $1/N$-quadrature rule 
(that is exact on $\mathcal{P}_\tau$). Then the universal lower bound holds
\begin{equation}\label{ULB1}
\mathcal{E}(n,N;h)\ge  N^2\sum_{i=1}^{k} \rho_i h(\alpha_i).
\end{equation}
\end{theorem}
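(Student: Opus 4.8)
The plan is to prove Theorem~\ref{THM_ULB} as a linear programming bound resting on a single auxiliary polynomial together with the $1/N$-quadrature \eqref{defin_qf}. First I would isolate the two properties an auxiliary polynomial $f$ of degree at most $\tau$ must possess: (a) $f(t)\le h(t)$ for every $t\in[-1,1)$, and (b) its Gegenbauer coefficients in \eqref{geg-exp} satisfy $f_j\ge 0$ for $j=1,\dots,\tau$. Granting such an $f$, for any code $C$ with $|C|=N$ I split the energy over ordered pairs; since $x\cdot y\in[-1,1)$ whenever $x\neq y$, property (a) gives
\[
E_h(n,C)=\sum_{x\neq y}h(x\cdot y)\ \ge\ \sum_{x\neq y}f(x\cdot y)=\sum_{x,y\in C}f(x\cdot y)-Nf(1).
\]
Expanding $f$ and recalling the moments \eqref{Moments} yields $\sum_{x,y\in C}f(x\cdot y)=\sum_{j\ge 0}f_j\,M_j^n(C)$; by the classical positive-definiteness (Schoenberg) of the $P_j^{(n)}$ we have $M_j^n(C)\ge 0$ for $j\ge 1$ and $M_0^n(C)=N^2$, so property (b) forces $\sum_{x,y\in C}f(x\cdot y)\ge f_0N^2$ and hence $E_h(n,C)\ge f_0N^2-Nf(1)$.

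Second, I would turn this into the asserted quadrature sum. Because $\deg f\le\tau$, the exactness of \eqref{defin_qf} gives $f_0=f(1)/N+\sum_i\rho_i f(\alpha_i)$, whence $f_0N^2-Nf(1)=N^2\sum_i\rho_i f(\alpha_i)$. Thus the entire bound collapses to $N^2\sum_i\rho_i f(\alpha_i)$, and it only remains to arrange that $f$ matches $h$ at every node, i.e. $f(\alpha_i)=h(\alpha_i)$.

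Third, I would exhibit $f$ as the Hermite interpolant of $h$ at the nodes of \eqref{defin_qf}: each interior node $\alpha_i$ with $i\ge 1$ taken with multiplicity two, and, when $\epsilon=1$, the node at $-1$ taken with multiplicity one. This imposes exactly $\tau+1$ conditions and determines a polynomial of degree at most $\tau$ with $f(\alpha_i)=h(\alpha_i)$ for all $i$. Property (a) is then read off the Hermite remainder
\[
h(t)-f(t)=\frac{h^{(\tau+1)}(\xi_t)}{(\tau+1)!}\,(1+t)^{\epsilon}\prod_{i=1}^{k}(t-\alpha_i)^2,
\]
since $(1+t)^{\epsilon}\prod_{i=1}^{k}(t-\alpha_i)^2\ge 0$ on $[-1,1)$ and $h^{(\tau+1)}\ge 0$ by absolute monotonicity. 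Combining with the previous steps, every code then satisfies $E_h(n,C)\ge N^2\sum_i\rho_i h(\alpha_i)$, and passing to the infimum over $|C|=N$ yields \eqref{ULB1} --- modulo property (b).

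Finally, the \emph{main obstacle} is property (b): that this Hermite interpolant has nonnegative Gegenbauer coefficients $f_j\ge 0$ for $1\le j\le\tau$. Here the quadrature is of no direct use, since $\deg\!\big(fP_j^{(n)}\big)$ generally exceeds $\tau$, so it cannot be applied to $fP_j^{(n)}$. Instead I would lean on the arithmetic of the node system: the $\alpha_i$ are the roots of the quasi-orthogonal combination of the adjacent Gegenbauer polynomials $P_k^{(\epsilon,1)}$ and $P_{k-1}^{(\epsilon,1)}$ fixed by the choice of $\alpha_k$, and this structure permits a representation of each coefficient $f_j$ through the $1/N$-quadrature data and divided differences of $h$. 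Nonnegativity of $f_j$ would then be inherited from the absolute monotonicity of $h$ (all of whose divided differences are nonnegative) once the sign pattern carried by the Gegenbauer/Jacobi basis is controlled. This is the step in which both hypotheses --- absolute monotonicity of $h$ and the precise Levenshtein node configuration --- are genuinely used, and where essentially all of the technical work resides.
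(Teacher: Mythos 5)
Your strategy is the same one used in \cite{BDHSS} (and echoed in this paper's own proof of Theorem \ref{P_T PULB-space}): the Delsarte--Yudin linear programming bound, collapsed through the Levenshtein $1/N$-quadrature \eqref{defin_qf}, applied to the Hermite interpolant of $h$ at the Levenshtein nodes (interior nodes doubled, the node $-1$ simple when $\epsilon=1$), with $f\le h$ read off the Hermite remainder and absolute monotonicity. Steps one through three of your write-up are correct: the pair-splitting of the energy, the use of Schoenberg positive definiteness to get $\sum_{x,y}f(x\cdot y)\ge f_0N^2$, the algebra $f_0N^2-Nf(1)=N^2\sum_i\rho_i f(\alpha_i)$, and the sign of the Hermite remainder are all as in the cited source. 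You also correctly observe that the quadrature cannot be applied to $fP_j^{(n)}$ to extract the coefficients $f_j$.

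However, there is a genuine gap, and you name it yourself: property (b), the nonnegativity of the Gegenbauer coefficients of the Hermite interpolant, is never proved --- and that is precisely the content of Theorem 3.1 of \cite{BDHSS}, i.e.\ the substance of the statement you were asked to prove. Your sketch (``a representation of each coefficient $f_j$ through the $1/N$-quadrature data and divided differences \dots once the sign pattern carried by the Gegenbauer/Jacobi basis is controlled'') is not an argument; without it the bound \eqref{ULB1} does not follow. The actual resolution is structural rather than coefficient-by-coefficient: one writes the interpolant in Newton form $f(t)=\sum_{j=0}^{\tau}h[t_1,\dots,t_{j+1}]\,u_j(t)$ over the node multiset, notes that absolute monotonicity makes every divided difference nonnegative, and then proves that each partial product $u_j(t)=(t-t_1)\cdots(t-t_j)$ is itself positive definite (has nonnegative Gegenbauer coefficients). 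That last fact is where the Levenshtein node structure enters: it uses the representation of the nodes as roots of the quasi-orthogonal combination of $P_k^{(\epsilon,1)}$ and $P_{k-1}^{(\epsilon,1)}$, Christoffel--Darboux-type identities, the location of the largest node to the right of the zeros of the lower-degree Gegenbauer polynomials (a Cohn--Kumar style argument, cf.\ the use of \cite[Proposition 3.2]{CK} in this paper's Lemma \ref{PosDef}), and Gasper's linearization theorem \cite{G} to propagate positive definiteness through products. With that lemma in hand, your steps one through three complete the proof exactly as in \cite{BDHSS}; without it, the proposal establishes only the easy half of the theorem.
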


\begin{center}
\begin{table}
\scalebox{0.75}{
\begin{tabular}{|c|c|c|c|c|c|c|}
\hline
dim & Cardinality & Strength & Energy (ULB bound)  \\ 
$n$ & $N$ & $\tau$ & $\mathcal{E}(n,N;h)/N$ \\
\noalign{\smallskip}\hline\noalign{\smallskip}
$2$ & $N=2k$ & $2k-1$ & $h(-1) + 2\sum\limits_{j=1}^{k-1} h(\cos(2j\pi/N))$  \\ 
\noalign{\smallskip}\hline\noalign{\smallskip}
$2$ & $N=2k+1$ & $2k$ & $2\sum\limits_{j=1}^k h(\cos(2j\pi/N))$  \\ 
\noalign{\smallskip}\hline\noalign{\smallskip}
$n$ & $N \leq n$ & 1 & $(N-1)h(-1/(N-1))$  \\ 
\noalign{\smallskip}\hline\noalign{\smallskip}
$n$ & $n+1$ & 2 & $nh(-1/n)$  \\ 
\noalign{\smallskip}\hline\noalign{\smallskip}
$n$ & $2n$ & 3 & $h(-1) + 2(n-1)h(0)$  \\ 
\noalign{\smallskip}\hline\noalign{\smallskip}
3 & 12 & 5 & $h(-1) + 5h(-1/\sqrt{5}) + 5h(1/\sqrt{5})$  \\ 
\noalign{\smallskip}\hline\noalign{\smallskip}
5 & 16 & 3 & $5h(-3/5) + 10h(1/5)$  \\ 
\noalign{\smallskip}\hline\noalign{\smallskip}
6 & 27 & 4 & $10h(-1/2) + 16h(1/4)$  \\ 
\noalign{\smallskip}\hline\noalign{\smallskip}
7 & 56 & 5 & $h(-1) + 27h(-1/3) + 27h(1/3)$  \\ 
\noalign{\smallskip}\hline\noalign{\smallskip}
8 & 240 & 7 & $h(-1) + 56h(-1/2) + 126h(0) + 56h(1/2)$  \\ 
\noalign{\smallskip}\hline\noalign{\smallskip}
21 & 112 & 3 & $30h(-1/3) + 81h(1/9)$  \\ 
\noalign{\smallskip}\hline\noalign{\smallskip}
21 & 162 & 3 & $56h(-2/7) + 105h(1/7)$  \\ 
\noalign{\smallskip}\hline\noalign{\smallskip}
22 & 100 & 3 & $22h(-4/11) + 77h(1/11)$  \\ 
\noalign{\smallskip}\hline\noalign{\smallskip}
22 & 275 & 4 & $112h(-1/4) + 162h(1/6)$  \\ 
\noalign{\smallskip}\hline\noalign{\smallskip}
22 & 891 & 5 & $42h(-1/2) + 512h(-1/8) + 336h(1/4)$  \\ 
\noalign{\smallskip}\hline\noalign{\smallskip}
23 & 552 & 5 & $h(-1) + 275h(-1/5) + 275h(1/5)$  \\ 
\noalign{\smallskip}\hline\noalign{\smallskip}
23 & 4600 & 7 & $h(-1) + 891h(-1/3) + 2816h(0) + 891h(1/3)$  \\ 
\noalign{\smallskip}\hline\noalign{\smallskip}
24 & 196560 & 11 & $h(-1) + 4600h(-1/2) + 47104h(-1/4) + 93150h(0) + 47104h(1/4) + 4600h(1/2)$  \\
\noalign{\smallskip}\hline\noalign{\smallskip}
$q(q^3+1)/(q+1)$ & $(q^3+1)(q+1)$ & 3 & $q(q^2+1)h(-1/q)+q^4h(1/q^2)$ \\
$q$ -- a prime power &&& \\
\hline
\end{tabular}
}
\bigskip
\caption{The energy ULB bound for sharp codes}
\label{EnergyULB_Table}
\end{table}
\end{center}

Table 1 displays the scaled energy $\mathcal{E}(n,N;h)/N$ of all sharp codes, which coincides with the ULB \eqref{ULB1} divided by the cardinality $N$. 
Namely, the inner products are given by $\alpha_i$ (the Levenshtein nodes) and the numbers of occurrences of these inner products are the positive
integers $N\rho_i$ (the Levenshtein weights). We note that these spectral parameters (inner products and number of their occurrences) determine uniquely the sharp codes of dimension $n$ and cardinality $N$ (see \cite[Appendix A]{CK}, \cite{BS81,CK-2}; the uniqueness of smaller codes follows from the uniqueness of corresponding strongly regular graphs). Therefore, we will use the information from Table 1 in order to identify the sharp codes via their distance distribution as shown by the coefficients in front
of the values of the potential $h$. 

\subsection{Strongly regular graphs} Our analysis of the universal minima of the the sharp codes will utilize the notion of {\em strongly regular graphs} srg$(v,\ell,\lambda,\mu)$, that is graphs with $v$ vertices, $\ell$ edges stemming out of each vertex, $\lambda$ neighbors to any two adjacent vertices, and $\mu$ neighbors to any non-adjacent pair. Our main reference to the properties of these fascinating mathematical structures will be the recent comprehensive book of Brouwer and Van Maldeghem \cite{BrSRG}. The relation between the strongly regular graphs and sharp codes was described by Cameron, Goethals and Seidel in \cite{CGS}. 

\section{Universal minima for sharp codes: $h^{(\tau+1)}(t)\geq 0$ case}\label{pos-derivative}

In this section we shall provide a detailed analysis on the relation between our PULB \eqref{PolarizationULB} in the case (i) of Theorem \ref{PULB}
and the sharp codes in the case $T=\{1,2,\ldots,\tau\}$. 

Table 2 lists the parameters of the PULB \eqref{PolarizationULB} when $h^{(\tau+1)} \geq 0$,  the case (i) of Theorem \ref{PULB}, for the sharp codes $C=(n,N,\tau)$. 
As we shall see in this section, most sharp codes attain \eqref{PolarizationULB} with point(s) $\widetilde{x}$ and distance distributions exactly as shown in Table 2. 
Yet there are some notable exceptions, such as the icosahedron, and the kissing configurations of the $E_8$ lattice and 
the Leech lattice. Indeed, their coefficients $N\rho_i$ in the table are non-integer, so they cannot attain \eqref{PolarizationULB}. However, these exceptions 
serve as a motivation to develop the framework for an enhanced PULB for $T$-designs which will be considered in next sections.  

 We use the data from Table 2 to identify the structure of the sharp codes with respect to the sought point $\widetilde{x}$. Consequently,
we are able to present  $\widetilde{x}$ explicitly in all cases under consideration. 

\begin{center}
\begin{table}
\scalebox{0.5}{
\begin{tabular}{|c|c|c|c|c|c|c|}
\hline
dim & Cardinality & Strength & Polarization (PULB bound) \\ 
$n$ & $N$ & $\tau$ & $h^{(\tau+1)} \geq 0$ \\
\noalign{\smallskip}\hline\noalign{\smallskip}
$2$ & $N=2k$ & $2k-1$ & $2\sum\limits_{j=1}^k h(\cos((2j-1)\pi/N))$ \\
\noalign{\smallskip}\hline\noalign{\smallskip}
$2$ & $N=2k+1$ & $2k$ & $h(-1) + 2\sum\limits_{j=1}^k h(\cos(2j\pi/N))$ \\
\noalign{\smallskip}\hline\noalign{\smallskip}
$n$ & $N \leq n$ & 1 & $Nh(0)$ \\ 
\noalign{\smallskip}\hline\noalign{\smallskip}
$n$ & $n+1$ & 2 & $h(-1) + nh(1/n)$ \\ 
\noalign{\smallskip}\hline\noalign{\smallskip}
$n$ & $2n$ & 3 & $nh(-1/\sqrt{n}) + nh(1/\sqrt{n})$ \\ 
\noalign{\smallskip}\hline\noalign{\smallskip}
$3^{*}$ & 12 & 5 & $4[(5/6)h(-\sqrt{3/5}) + (4/3)h(0) + (5/6)h(\sqrt{3/5})]$ \\ 
\noalign{\smallskip}\hline\noalign{\smallskip}
5 & 16 & 3 & $8h(-1/\sqrt{5}) + 8h(1/\sqrt{5})$ \\ 
\noalign{\smallskip}\hline\noalign{\smallskip}
6 & 27 & 4 & $h(-1) + 16h(-1/4) + 10h(1/2)$ \\ 
\noalign{\smallskip}\hline\noalign{\smallskip}
7 & 56 & 5 & $12h(-1/\sqrt{3}) + 32h(0) + 12h(1/\sqrt{3})$ \\ 
\noalign{\smallskip}\hline\noalign{\smallskip}
$8^{*}$ & 240 & 7 & $240[((6 -\sqrt{15})/24)h(-\sqrt{25+5\sqrt{15}}/10) +((6 +\sqrt{15})/24)h(-\sqrt{25-5\sqrt{15}}/10) +((6+\sqrt{15})/24) h(\sqrt{25-5\sqrt{15}}/10) +((6 -\sqrt{15})/24)h(\sqrt{25+5\sqrt{15}}/10)]$ \\ 
\noalign{\smallskip}\hline\noalign{\smallskip}
21 & 112 & 3 & $56h(-1/\sqrt{21}) + 56h(1/\sqrt{21})$ \\ 
\noalign{\smallskip}\hline\noalign{\smallskip}
21 & 162 & 3 & $81h(-1/\sqrt{21}) + 81h(1/\sqrt{21})$ \\ 
\noalign{\smallskip}\hline\noalign{\smallskip}
22 & 100 & 3 & $50h(-1/\sqrt{22}) + 50h(1/\sqrt{22})$ \\ 
\noalign{\smallskip}\hline\noalign{\smallskip}
22 & 275 & 4 & $h(-1) + 162h(-1/6) + 112h(1/4)$ \\ 
\noalign{\smallskip}\hline\noalign{\smallskip}
22 & 891 & 5 & $162h(-1/\sqrt{8}) + 567h(0) + 162h(1/\sqrt{8})$ \\ 
\noalign{\smallskip}\hline\noalign{\smallskip}
23 & 552 & 5 & $100h(-\sqrt{3}/5) + 352h(0) + 100h(\sqrt{3}/5)$ \\ 
\noalign{\smallskip}\hline\noalign{\smallskip}
23 & 4600 & 7 & $275h(-\sqrt{5}/5) + 2025h(-\sqrt{5}/15) + 2025h(\sqrt{5}/15) + 275h(\sqrt{5}/5)$ \\ 
\noalign{\smallskip}\hline\noalign{\smallskip}
$24^{*}$ & 196560 & 11 & $(1207.983)h(-0.577) + (21794.872)h(-0.349) + (75277.144)h(-0.117) + (75277.144)h(0.117) + (21794.872)h(0.349) + (1207.983)h(0.577)$ \\
\noalign{\smallskip}\hline\noalign{\smallskip}
$\frac{q(q^3+1)}{q+1}^{*}$ & $(q^3+1)(q+1)$ & 3 & $\frac{(q^3+1)(q+1)}{2}[h(-\sqrt{\frac{1}{q^3-q^2+q}}) + h(\sqrt{\frac{1}{q^3-q^2+q}})]$ \\
   &      &      &  \\
\hline
\end{tabular}
}
\bigskip
\caption{Universal minima for sharp codes, $h^{(\tau+1)}(t)\geq 0$ case. Codes that do not attain the PULB \eqref{PolarizationULB} (the icosahedron, the kissing configurations of $E_8$ and the Leech lattice) are indicated with *.}
\end{table}
\end{center}

Regarding the infinite family of sharp codes on the last row of Table 2, we comment that for $q=2$ and $q=3$ these are the rows with $N=27$ and $N=112$. For $q$ a power of a prime number, a strongly regular graph srg$((q^3+1)(q+1) ,q(q^2+1),q-1,q^2+1)$ exists (see \cite[Chapter 3]{BrSRG}, \cite[Chapter 9]{EZ}). For  $q=2^m$, $m\geq 2$ the coefficients are not integer and so the first-level bound is not attained. This family, along with the $600$-cell will be considered in detail in a future work.

The following theorem outlines the main result of this section with the specific details for each code following in separate subsections. More detailed classification of the points where the universal minima are attained is summarized in Section \ref{classification} and Theorem \ref{6_1}. 

\begin{theorem} \label{sharp-pos-level-1}
All unmarked sharp codes from Table 2 attain the bound \eqref{PolarizationULB}  when $h^{(\tau+1)} \geq 0$, case (i) of Theorem \ref{PULB}, with inner products and distance distributions following the data from that table 
and points $\widetilde{x}$ as described below. 
\end{theorem}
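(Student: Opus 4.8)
The plan is to read Theorem~\ref{PULB} in its converse direction. Since \eqref{PolarizationULB} is a \emph{lower} bound for $\mathcal{Q}_h(C)=\inf_{x}U_h(x,C)$, to prove that a code $C$ attains it, it suffices to exhibit a single point $\widetilde{x}\in\mathbb{S}^{n-1}$ at which $U_h(\widetilde{x},C)$ equals the right-hand side of \eqref{PolarizationULB}, simultaneously for every admissible potential $h$. By \eqref{QRcode} we have $U_h(\widetilde{x},C)=N\sum_i r_i h(u_i)$, where $\{u_i\}=I(\widetilde{x},C)$ and the $r_i$ are the relative frequencies; this equals $N\sum_{i\in I}\rho_i h(\alpha_i)$ for all $h$ precisely when the multiset of inner products $\{\widetilde{x}\cdot y:y\in C\}$ consists of the quadrature nodes $\{\alpha_i\}_{i\in I}$ with multiplicities $\{N\rho_i\}_{i\in I}$. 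Combined with the lower bound this forces $\mathcal{Q}_h(C)=N\sum_{i\in I}\rho_i h(\alpha_i)$ and identifies $\widetilde{x}$ as a minimizer. Thus the whole theorem reduces to producing, for each unmarked code, one such distinguished point and verifying its distance distribution.

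First I would assemble the data for each row of Table~2. For $\tau=2k-1+\epsilon$ the nodes $\{\alpha_i\}$ are the zeros of $(1+t)^\epsilon P_k^{(0,\epsilon)}(t)$ and the weights are the Gauss--Jacobi weights \eqref{RhoWeights}; a preliminary step is to record these explicitly and to confirm that every $N\rho_i$ is a positive integer (this is exactly what fails for the starred codes, which is why they are excluded here). Since the largest node satisfies $\alpha_k<1$, the self inner product $1$ never occurs, so the sought point always lies off the code, $\widetilde{x}\notin C$. When $\epsilon=1$ the node $\alpha_0=-1$ is present, which forces $-\widetilde{x}\in C$; as each of the relevant $\epsilon=1$ configurations is non-antipodal, the natural candidate is $\widetilde{x}=-v$ for a suitable code point $v$. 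When $\epsilon=0$ the candidate is a highly symmetric external point---for instance a vector orthogonal to the code's span for the under-filled simplices ($N\le n$), the normalized all-ones direction for the cross-polytope, and an edge-bisecting direction for the even polygons.

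The substance of the argument is then case by case. For the small, explicitly coordinatized codes (polygons, simplices, cross-polytope, and the $2n$-point families) the verification is direct trigonometry or linear algebra. For the structured codes I would exploit their combinatorial description recorded in Table~1 and Section~\ref{prel}: the distance distribution of a sharp code is governed by an association scheme, and for the two-distance codes in dimensions $21$, $22$, $23$ (and the $27$- and $56$-point codes) by a strongly regular graph $\mathrm{srg}(v,\ell,\lambda,\mu)$. For such a code I would take $\widetilde{x}$ to be a distinguished point adapted to the graph, partition $C$ according to its inner product with $\widetilde{x}$, and compute the class sizes from the parameters $(v,\ell,\lambda,\mu)$; the content is to show that these sizes are exactly $\{N\rho_i\}$ and the inner products exactly $\{\alpha_i\}$. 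A useful consistency check at each stage is that, because $C$ is a $\tau$-design, the inner-product distribution seen from \emph{any} point already matches the quadrature moments through degree $\tau$ by \eqref{defin_f}; the distinguished points are precisely those at which this distribution degenerates onto the minimal support $\{\alpha_i\}_{i\in I}$.

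The main obstacle is this last degeneration: finding, for each combinatorially defined code, an external point whose inner products with the code collapse onto the few quadrature nodes with the predicted integer multiplicities, and proving the collapse rigorously rather than numerically. Unlike the energy setting, where the Levenshtein nodes coincide with the code's own pairwise inner products, here the polarization nodes $\{\alpha_i\}$ are genuinely different from the inner products recorded in Table~1, so $\widetilde{x}$ cannot be a code point and must be located inside the configuration's geometry (a deep hole, or a fixed point of a large symmetry subgroup). I expect the heaviest lifting to be for the $56$-point code on $\mathbb{S}^6$ and the strongly regular graph codes in dimensions $21$--$23$, where pinning down $\widetilde{x}$ and counting the resulting distance classes requires detailed use of the graph parameters together with the uniqueness results cited after Table~1.
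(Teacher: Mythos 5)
Your reduction is exactly the one the paper uses: since \eqref{PolarizationULB} is a lower bound on $\mathcal{Q}_h(C)$, attainment for every admissible $h$ is equivalent to exhibiting a single point $\widetilde{x}$ whose inner products with $C$ are precisely the nodes $\{\alpha_i\}_{i\in I}$ with multiplicities $\{N\rho_i\}_{i\in I}$. Your candidates for the elementary rows (a vector orthogonal to the span when $N\le n$, the normalized all-ones direction for the cross-polytope, mid-arc points for the even $N$-gon, and $\widetilde{x}=-v$ for the $\epsilon=1$ codes $(n,n+1,2)$, $(6,27,4)$, $(22,275,4)$, where the node $-1$ indeed forces $-\widetilde{x}\in C$) are the same ones the paper verifies, and your remark that the $\tau$-design property plus uniqueness of the quadrature pins down the multiplicities once the inner products collapse onto $\{\alpha_i\}$ is a device the paper uses repeatedly.

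The genuine gap is that for the remaining codes --- which are the substance of the theorem --- your proposal stops at ``take $\widetilde{x}$ to be a distinguished point adapted to the graph,'' and this is precisely the step that cannot be carried out from the parameters $\mathrm{srg}(v,\ell,\lambda,\mu)$ alone. The paper's proof of each such case is an explicit construction. For $(5,16,3)$ and $(7,56,5)$ it builds coordinate models in which the code splits into layers at heights $\pm 1/\sqrt{5}$ (respectively, two $12$-point cross-polytopes at heights $\pm 1/\sqrt{3}$ plus a $32$-point equatorial set) relative to $\widetilde{x}$. For $(22,100,3)$ it realizes the code from the $77$ weight-$7$ Golay codewords beginning with $1$, and $\widetilde{x}$ is manufactured from one of the other $176$ weight-$7$ codewords; the resulting half-half split of the Higman--Sims graph into two Hoffman--Singleton graphs is extra structure, not a consequence of $\mathrm{srg}(100,22,0,6)$. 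For $(21,162,3)$ and $(21,112,3)$, $\widetilde{x}=(g-e)/\|g-e\|$, where $g$ is the centroid of the $81$ (respectively $56$) common neighbors of two non-adjacent vertices of the (complement of the) MacLaughlin graph and $e$ is the centroid of the derived code --- note the minima form a family indexed by the $112$ (respectively $162$) vertices of the other subconstituent, so they are not located by an evident symmetry or deep-hole heuristic. For $(23,552,5)$, $(23,4600,7)$, and $(22,891,5)$ the paper writes $\widetilde{x}$ explicitly in Leech-lattice/Golay coordinates (e.g.\ $\widetilde{x}=(1/5,\dots,1/5,-\sqrt{3}/5)$ and $\widetilde{x}=\frac{1}{\sqrt{24}}[-1,1,-1,1,\dots,1]$) and checks the collapse of inner products against the solved values \eqref{xy_values2}--\eqref{ab_values2}. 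As written, your proposal is a correct framing plus a plan, but the constructive core that constitutes the proof --- producing these points and rigorously verifying the degeneration of $I(\widetilde{x},C)$ onto the nodes --- is acknowledged as ``the main obstacle'' and then left undone.
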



\subsection{Universal minima for $N$-gons, simplexes, and cross-polytopes} In this subsection we summarize what is already known in the literature about polarization of sharp codes, namely that the codes in the first five lines of Table 2 all attain our PULB \eqref{PolarizationULB}.

The spherical codes for which $N\leq n$ are considered in \cite[Proposition 14.2.1]{BHS}, where it is shown that the necessary and sufficient condition for a spherical code to attain its max-min polarization among all configurations of cardinality $N$ is that the code is a $1$-design  (see also \cite[Example 6.1]{BDHSS_P}). Since the sharp codes in this case are embedded simplexes with design strength one, they attain our PULB \eqref{PolarizationULB}.

The max-min polarization optimality of the regular simplex was only recently established by Su \cite{Su} for the case $n=3$ and for general $n$ by Borodachov in \cite{B2} \textcolor{black}{(see also \cite{NR2})}. Using our PULB and PUUB results, an alternative proof was provided in \cite[Example 6.2]{BDHSS_P}.

The next class of sharp codes, the cross-polytopes $C_{2n}$ of $2n$ points on $\mathbb{S}^{n-1}$, also attain \eqref{PolarizationULB}. This was shown in \cite{NR2} for the case of power potentials and in \cite[Example 6.5]{BDHSS_P} for general potentials satisfying $h^{(\tau+1)}(t)\geq 0$. 

At the end of this subsection we consider the code from the first two lines of Table 2, the regular $N$-gon $C_N$. In this case the configuration is an optimal max-min polarization code on the unit circle (see \cite[Section 14.3]{BHS}). We shall derive this optimality utilizing our PULB \eqref{PolarizationULB}. 
Consider when $N$ is even, i.e. $N=2k$. In this case the regular $N$-gon is a $(2k-1)$-design on the circle $\mathbb{S}^1$ and Theorem \ref{PULB}, Case (i) with $\epsilon=0$ holds. The polynomials $P_k^{(0,0)}(t)=\cos(k\arccos t)$ are the Chebyshev polynomials with nodes $\alpha_i =\cos((2i-1)\pi/(2k))$ and equal weights $\rho_i=1/k$, $i=1,\dots,k$. Clearly, a mid-point $\widetilde{x}$ of any arc induced by the points $\alpha_i$ will satisfy the PULB \eqref{PolarizationULB} with equality; i.e., 
\[ \mathcal{Q}_h (C_N) =U_h (\widetilde{x},C_N)= \sum_{i=1}^k 2h(\cos((2i-1)\pi/N)\quad (=\underline{\mathcal Q}_h(n,N,\tau)). \]
The case of odd $N$ is similar.

\subsection{The sharp code $(5,16,3)$} As noted by Cohn-Kumar in \cite{CK} the codes in rows 7-9 of Table 2 are obtained as {\em kissing spherical caps} configurations of the  codes below them. Namely, fixing a point in the $(8,240,7)$-code derived from the $E_8$ lattice, the $56$ ``nearest neighbors" that have the largest inner product $1/2$ (see Table 1) form a scaled version of the $(7,56,5)$-code. Similarly, the $27$ ``nearest neighbors" 
of a fixed point on $(7,56,5)$-code form a scaled version of the $(6,27,4)$-code and the $16$ ``nearest neighbors" of a fixed point on $(6,27,4)$-code 
form a scaled version of the $(5,16,3)$-code. 

Table 1 reveals that if we fix any point in the $(5,16,3)$-code, there are $10$ points at inner product $1/5$ and $5$ points at an inner product of $-3/5$ with the fixed point. This property defines uniquely the code.

Upon inspection of Table 2, we conclude that should $C_{16}:=(5,16,3)$ attain \eqref{PolarizationULB}, there has to be a point $\widetilde{x}\in \mathbb{S}^4$ such that $C_{16}$ splits into two subsets $A:=\{a_i\}_{i=1}^8$ and $B:=\{b_j\}_{j=1}^8$, such that $\widetilde{x}\cdot a_i =1/\sqrt{5}$, $i=1,\dots, 8$, and $\widetilde{x}\cdot b_j=-1/\sqrt{5}$, $j=1,\dots, 8$.

Without loss of generality we may assume $\widetilde{x}=(0,0,0,0,1)$. Define the points $a_i$ to be the eight permutations 
\[A:=\{ (\pm \frac{2}{\sqrt{5}},0,0,0,\frac{1}{\sqrt{5}})\}=\{a_i\}_{i=1}^8 ,\] 
where the last coordinate stays fixed and the other non-zero coordinate switches  positions.  
We next define the points 
\[ B:=\{ (\pm \frac{1}{\sqrt{5}},\pm \frac{1}{\sqrt{5}},\pm \frac{1}{\sqrt{5}},\pm \frac{1}{\sqrt{5}},-\frac{1}{\sqrt{5}})\}= \{b_i\}_{i=1}^8, \]
where the number of negative signs is even. 
Obviously, $\widetilde{x} \cdot a_i=1/\sqrt{5}=-\widetilde{x} \cdot b_i$, i.e. the point $\widetilde{x}$ has the required properties.

We next verify that the constructed code has the same inner product distribution as $C_{16}$. Fix $a_1=(2/\sqrt{5},0,0,0,1/\sqrt{5})$. 
Then there is $a_1\cdot a_i$ takes on the values of $-3/5$ and $1/5$ of multiplicities $1$ and $6$ respectively. 
Similarly, we can compute $a_1\cdot b_j$, which takes on the values of $-3/5$ and $1/5$ of multiplicities $4$ and $4$ respectively. Thus, the total inner product distribution is the same. The symmetry implies that the same is true for all points in the code.

\subsection{The sharp codes $(6,27,4)$ and $(22,275,4)$ } In these cases the determination of $\widetilde{x}$ is straightforward. 
Table 1 reveals that a fixed point $y$ in $(6,27,4)$ has inner products with the other points in the code $-1/2$ and $1/4$ with multiplicity $10$ and $16$ respectively. Then the antipodal point $\widetilde{x}=-y$ has the required inner products with matching multiplicity for the quadrature rule from Table 2. The same argument applies to the second code $(22,275,4)$. We note that both codes are strongly sharp and not antipodal.

\subsection{The sharp code $(7,56,5)$ -- equiangular lines} This code is obtained as the intersection of a famous set of 
$28$ equiangular lines in $\mathbb{R}^7$ with the unit sphere $\mathbb{S}^6$ (see \cite[Chapter 11]{GR}). In this case our considerations go along the lines of Subsection 3.2. Let us select a point $\widetilde{x}=(0,0,0,0,0,0,1)$. Informed by Table 2 we seek to split $C_{56}:=(7,56,5)$ into three subsets 
\[ C_{56}=A\cup E\cup B, \]
where $A$ and $B$ will have altitude (last coordinate) $1/\sqrt{3}$ and $-1/\sqrt{3}$, respectively, and cardinalities $|A|=|B|=12$ and where $E$ is on the Equator (last coordinate $0$) and $|E|=32$. Let $A$ and $B$ be scaled down cross-polytopes at altitudes $1/\sqrt{3}$ and $-1/\sqrt{3}$, namely consider the twelve possible permutations for each of the sets (last coordinate fixed)
\[A:=\left\{ \left(\pm \sqrt{\frac{2}{3}},0,0,0,0,0,\frac{1}{\sqrt{3}}\right) \right\} =\left\{a_i\right\}_{i=1}^{12},\quad B:=\left\{ \left(\pm \sqrt{\frac{2}{3}},0,0,0,0,0,-\frac{1}{\sqrt{3}}\right) \right\} =\left\{b_j\right\}_{j=1}^{12}.\] 

Define the set $E$ to consist of the $32$ permutations of the type
\[ E:=\left\{ \left(\pm \frac{1}{\sqrt{6}},\pm \frac{1}{\sqrt{6}},\pm \frac{1}{\sqrt{6}},\pm \frac{1}{\sqrt{6}},\pm\frac{1}{\sqrt{6}},\pm \frac{1}{\sqrt{6}},0\right)\right\}=\{ e_\ell \}_{\ell=1}^{32},\]
where we either have all positive (1) or all negative (1) signs, or two negative (15) or four negative (15) signs. Fix $a_1:=(\sqrt{2/3},0,0,0,0,0,\sqrt{1/3})$. Then: 
\begin{itemize}
\item $a_1\cdot a_i$ has values $-1$, $-1/3$ and $1/3$ of multiplicities $0$, $1$ and $10$ respectively; 
\item $a_1\cdot b_j$ has values $-1$, $-1/3$ and $1/3$ of multiplicities $1$, $10$, and $1$ respectively;
\item  $a_1\cdot e_\ell$ has values $-1$, $-1/3$ and $1/3$ of multiplicities $0$, $16$, and $16$ respectively,
\end{itemize}
thus $I(a_1,C_{56})=\{-1,-1/3,1/3\}$ with multiplicities $1,27,27$, respectively. It is similarly straightforward to see that all other points of $C$ have 
the same distance distribution, so the uniqueness implies that this sharp code also attains the PULB bound \eqref{PolarizationULB}.

\subsection{The sharp code $(22,100,3)$ from the binary Golay code and the Higman-Sims graph} The next sharp code has a remarkable connection to the {\em Higman-Sims graph} discovered by Dale Mesner in 1956 \cite{Me}  and re-discovered in a different setting by Higman and Sims in 1968 \cite{HiSi}
(see Figure 1). This is a unique strongly regular graph srg$(100,22,0,6)$ with $100$ vertices and $1100$ edges \cite{BrSRG}. Each vertex has degree $22$. Any two adjacent vertices share no common neighbor (triangle-free graph) 
and any two non-adjacent vertices have exactly $6$ common neighbors. We shall explicitly construct the 
$C_{100}:=(22,100,3)$ code and find all points $\widetilde{x}\in \mathbb{S}^{21}$ whose inner products to half the points of the code are $-1/\sqrt{22}$ and to the other half are $1/\sqrt{22}$, exactly as the row of $C_{100}$ in Table 2 suggests.

The description of the construction of $C_{100}$ starts with the {\em binary Golay code} $\mathcal{C}_{23}$, ``probably the most important of all codes, for both practical and theoretical reasons" 
\cite{MWS}. We describe the necessary properties of the Golay codes following the book \cite{MWS} (Chapter 2, \textsection 2.6, Chapter 20). 
The code  $\mathcal{C}_{23}$ is linear (i.e., a subspace of $\{0,1\}^{23}$), has length 23, dimension $12$ (i.e., cardinality $2^{12}=4096$) 
and minimum distance $7$ (i.e. every two distinct codewords differ in at least 7 positions) \cite[Theorems 6 and 7]{MWS}. The number of
the codewords of minimum weight (i.e., the codewords with exactly 7 ones) is 253. 

\begin{figure}[htbp]
\centering
\includegraphics[width=3 in]{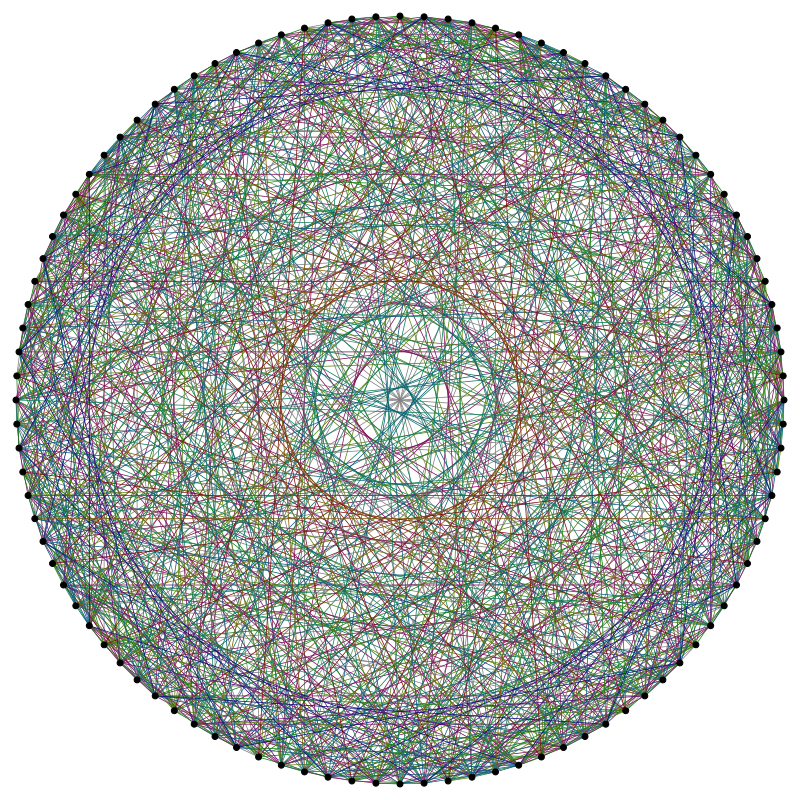}
\caption{Higman-Sims graph (By Claudio Rocchini - Own work, CC BY 3.0, https://commons.wikimedia.org/w/index.php?curid=4242731)}
\label{fig:1}
\end{figure}

The starting point of our construction will be the $253$ binary codewords of length $23$ from $\mathcal{C}_{23}$
that have weight $7$. There are $77$ out of these $253$ that start with $1$. Fix these $77$ binary codewords and 
eliminate their first digit $1$. Thus, we are left with $77$ binary words $\alpha_i$, $i=1,\dots,77$, of 
length $22$ and weight $6$, a set we denote by $\mathcal{A}$. Moreover, any two distinct words from $\mathcal{A}$
may intersect in common ones at $0$ or $2$ positions only. We remark that the remaining $253-77=176$ points of minimum weight 
in $\mathcal{C}_{23}$ still have to play important roles -- they will define the $2 \cdot 176=352$ points on $\mathbb{S}^{21}$, 
where the extrema of the polarization is attained. 


After this preparation we are ready to explicitly describe the construction of the spherical code $C_{100}$. 
Let us fix a point $c\in C_{100}$. Table 1 reveals that we need to find $77$ points $A:=\{a_i\}_{i=1}^{77}$ in $C_{100}$ that have inner product $1/11$ and $22$ points $B:=\{b_j\}_{j=1}^{22}$ that have inner product $-4/11$, i.e. $c\cdot a_i=1/11$, $i=1,\dots,77$ and $c\cdot b_j=-4/11$, $j=1,\dots,22$. Consider the points of $C_{100}$ as vertices of a graph and adjoin two vertices with an edge when the inner product between the corresponding points is $-4/11$. Now 
the construction of $C_{100}$ will follow closely the $1+22+77$ construction of the Higman-Sims graph  \cite{BrSRG}.

We now determine the set $A\subset \mathbb{S}^{21}$ by substituting $1$'s and $0$'s in $\alpha_i$ with $x$ and $y$
 to form the two letter $22$-length words $a_i$. We require that the inner product between $a_i$ and $a_j$ be $-4/11$ 
when they have no intersection and $1/11$ when they intersect at $2$ positions. Thus, we obtain the following system
\begin{equation}\label{xy_system}
\begin{split}
12 xy+10y^2&= - \frac{4}{11}\\
2x^2 + 8xy+ 12 y^2&=  \frac{1}{11}
\end{split}
\end{equation}
(observe that $6x^2+16y^2=1$, so $a_i\in\mathbb{S}^{21}$). Solving the system \eqref{xy_system} we obtain
\[ x=\frac{8\sqrt{5}-1}{11\sqrt{22}} \approx 0.32733, \quad \quad  y=-\frac{3\sqrt{5}+1}{11\sqrt{22}} \approx -0.1494. \]

We next construct the points of $B$ to be $22$-length words over the alphabet $\{z,u\}$ with exactly one $z$ and $21$ $u$'s. Requiring the inner product among points of $B$ to be $1/11$ and that they lie on $\mathbb{S}^{21}$ leads to the system
\begin{equation}\label{zu_system}
\begin{split}
2zu+20u^2&= \frac{1}{11}\\
z^2 + 21u^2&= 1
\end{split}
\end{equation}
with solution
\[ z=\frac{4-21\sqrt{5}}{11\sqrt{22}}, \quad \quad  u=\frac{4+\sqrt{5}}{11\sqrt{22}}. \]
We finally let
\[c:=(-\frac{1}{\sqrt{22}},-\frac{1}{\sqrt{22}}, \dots, -\frac{1}{\sqrt{22}})\in \mathbb{S}^{21}.\]

A straightforward verification yields that the constructed code $C_{100}$ has the same distance distribution as the unique $(22,100,5)$ sharp code 
(cf. the data in Table 1). 

We next construct the points of extremal polarization. For this purpose we consider the $176=253-77$ words of weight $7$ from $\mathcal{C}_{23}$
beginning with 0 and erase that 0. We denote the set of the remaining $22$-length binary codewords of weight $7$ as $\mathcal{B}$. 
These codewords are used to split the Higman-Sims graph into two copies ($1+7+42$ and $15+35$) of the {\em Hoffman-Singleton graph} (see \cite{HS60,Br}) as follows. Given a fixed  codeword of $\mathcal{B}$, form $\widetilde{x}\in \mathbb{S}^{21}$ 
that has $7$ $a$'s at the positions of the 7 
ones and $15$ $b$'s at the positions of the $15$ zeros. Let $C_1$ be the set of $50=1+7+42$ points of $C$ that include the point $c$, the $7$ points in $B$ that have $z$ in common position with an $a$ from  $\widetilde{x}$, and the $42$ points of $A$ that have one $x$ and $a$ in common position. Then the set 
$C_2$ of $50=15+35$ points is made of the other $15$ points from $B$ and the $35$ points from $A$ that have three $x$'s and $a$'s in common positions. 

Utilizing the conditions that $\widetilde{x}\cdot c =-1/\sqrt{22}$ and $\widetilde{x}\in \mathbb{S}^{21}$ we get the system
\[ \begin{split}
7a+15b&= 1\\
7a^2 + 15b^2&= 1,
\end{split} \]
which yields that
\[ a=\frac{5+15\sqrt{5}}{110}, \quad \quad  b=\frac{5-7\sqrt{5}}{110}. \]
 
 We verify directly the following inner product equalities
 \[ a(x+6y)+b(5x+10y)=-\frac{1}{\sqrt{22}}, \quad a(3x+4y) +b(3x+12y)= \frac{1}{\sqrt{22}},\]
 and 
\[ a(z+6u)+15bu=-\frac{1}{\sqrt{22}},\quad 7au+b(z+14u)= \frac{1}{\sqrt{22}}.\]

This implies that the sharp code $C_{100}=(22,100,3)$ attains the PULB \eqref{PolarizationULB}. \hfill $\Box$

\subsection{The sharp code $(22,275,4)$ and the MacLaughlin graph} \label{275} Even though we already have determined 
in subsection 3.3 that the sharp code $C_{275}:=(22,275,4)$ attains the PULB \eqref{PolarizationULB}, we shall explicitly 
construct this code as it will facilitate our analysis of the sharp codes $(21,112,3)$, $(21,162,3)$, and $(23, 552, 5)$. The construction is based on the MacLaughlin graph \cite{MacL}, a strongly regular graph srg$(275,112,30,56)$, with $275$ vertices and $15,400$ edges (see Figure 2). Each vertex has degree $112$, any two adjacent vertices share $30$ common neighbors and any two non-adjacent vertices have exactly $56$ common neighbors. The complementary graph is also a strongly regular graph srg$(275,162,105,81)$ with degree of the vertices $162$, adjoint vertices sharing $105$ common neighbors, while disjoint vertices sharing $81$ neighbors  \cite{BrSRG}. We shall explicitly construct the 
$C_{275}$ sharp code and find all points $\widetilde{x}\in \mathbb{S}^{21}$ whose inner products and frequency match the PULB data in Table 2.

\begin{figure}[htbp]
\centering
\includegraphics[width=3 in]{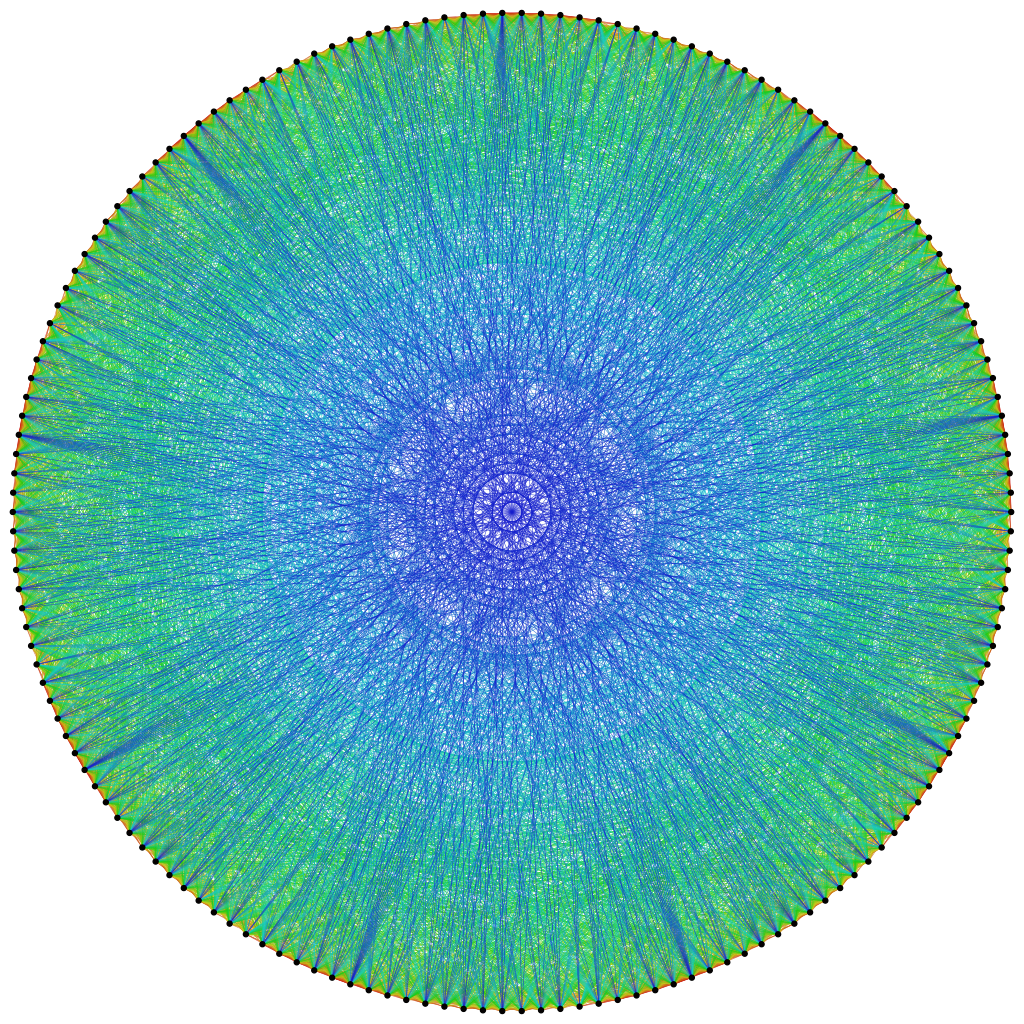}
\caption{McLaughlin graph (By Claudio Rocchini - Own work, CC BY-SA 3.0, https://commons.wikimedia.org/w/index.php?curid=10840948)}
\label{fig:2}
\end{figure}

Our starting point is again the $253$ binary codewords of length $23$ from the Golay code $\mathcal{C}_{23}$ that have weight $7$, which we shall call {\em blocks}.  Denote the collection of $77$ codewords that end with $1$ in the $23$-rd position with $A$ and the $176$ codewords that end in $0$ with $B$. Introduce the collection 
\[P:=\{[1,0,\dots,0],[0,1,\dots,0],\dots, [0,0,\dots,1]\} \]
of $23$ additional codewords of length $23$, which we call {\em points}. The combined $253+22+1=276$ codewords of points and blocks will be regarded as vertices of a graph. A point $p_i$ and a block $b_j$ will be adjacent if they don't share a $1$ on the same position, or $b_j[i]=0$. Two blocks may share $1$'s in only $1$ or $3$ positions (recall that the octads of  $\mathcal{C}_{23}$ intersect in $0$, $2$, or $4$ positions and we eliminated a column of $1$'s), they will be adjacent when they share $1$ in one position only. 

Let us fix the vertex (point) $p_{23}$. It will be connected with the $176$ blocks in $B$ and no other vertices in $P$ or $A$. Any other vertex in $P$ is connected to $120$ vertices in $B$, no vertices in $P$ and $56$ vertices in $A$. Any vertex in $A$ is adjacent to $96$ vertices in $B$, $16$ vertices in $P$, and $16$ in $A$. Finally, any vertex in $B$ is adjacent to $p_{23}$, $70$ other vertices in $B$, $15$ vertices in $P$ and $42$ vertices in $A$. 

We next apply an operation {\em switch} on the neighboring vertices of $p_{23}$, which erases any edge between a neighbor and non-neighbor of $p_{23}$ and creates an edge  between a neighbor and non-neighbor of $p_{23}$ when there is none. So, all edges stemming from $B$ to $A$ and $P$ are erased and all non-edges from $B$ to $A$ and $P$ become edges. Thus, $p_{23}$ becomes isolated. As such we are going to eliminate the last column and obtain codewords of length $22$, which we continue to refer as points and blocks in $P$, $A$, and $B$ having $22$, $77$, and $176$ elements respectively. We summarize the adjacency rules:
\begin{itemize}
\item[1.] A point $p\in P$ is adjacent to a block $a\in A$ if they don't share a $1$ on the same position ($56$ such occurrences), and is non-adjacent when they do; points in $P$ are disjoint; a point $p\in P$ is adjacent to a block $b\in B$, if they share a $1$ on the same position ($56$ such occurrences), and non-adjacent when they do not;  
\item[2.] A block $a\in A$ is adjacent to another block in $a^\prime \in A$ if there are no $1$'s on the same positions  ($16$ such occurrences), and non-adjacent when the two blocks share $1$'s in exactly $2$ positions; A block $a\in A$ is adjacent to a point $p \in P$ if no $1$'s are shared in the same position  ($16$ such occurrences); A block $a\in A$ is adjacent to a block $b\in B$ if exactly three $1$'s have shared positions and non-adjacent when only one $1$ is being shared in position  ($80$ such occurrences);
\item[3.] A block $b\in B$ is adjacent to another block in $b^\prime \in B$ if there is exactly one $1$ in a shared position  ($70$ such occurrences), and non-adjacent when the two blocks share $1$'s in exactly $3$ positions; A block $b\in B$ is adjacent to a point $p \in P$ if one $1$ shares the same position ($7$ such occurrences); A block $b\in B$ is adjacent to a block $a\in A$ if exactly three $1$'s have shared positions and non-adjacent when only one $1$ is being shared in position  ($35$ such occurrences).
\end{itemize}

Observe that the total number of edges for each vertex is $112$.

We next construct the code $C_{275}$ utilizing the MacLaughlin Graph structure as described above. Let $x,y,z,u,a,b$ be real variables in $[-1,1]$. We shall convert $22$-length codewords to points on $\mathbb{S}^{21}$ as follows. Vertices in $P$ will convert to points with one coordinate $z$ for $1$ and $21$ coordinates $u$ for $0$, vertices in $A$ to points with six $x$'s for $1$'s and $16$ $y$'s for $0$, and vertices in $B$ to points with seven $a$'s for $1$ and $15$ $b$'s for $0$. We shall require the inner product between two corresponding points in the code $C_{275}$ to be $-1/4$ when the corresponding vertices are adjacent and $1/6$ otherwise. 

The inner products conditions for adjacent and non-adjacent points corresponding to vertices in $A$ yields a system similar to \eqref{xy_system}
\[ \begin{split}
12 xy+10y^2&= - \frac{1}{4}\\
2x^2 + 8xy+ 12 y^2&=  \frac{1}{6},
\end{split} \]
that has four solutions, we select the one that works for the other inner products  
\begin{equation}\label{xy_values2}
x=\frac{2\sqrt{30}}{33} -\frac{\sqrt{2}}{22} \approx 0.26767, \quad \quad  y=-\frac{\sqrt{30}}{44} -\frac{\sqrt{2}}{22} \approx -0.18876.
\end{equation}

For $z$ and $u$ we get similar to \eqref{zu_system} system
\[ \begin{split}
2zu+20u^2&= \frac{1}{6}\\
z^2 + 21u^2&= 1,
\end{split} \]
with a solution
\begin{equation}\label{zu_values2}
z= \frac{3\sqrt{2}}{44} + \frac{7\sqrt{30}}{44}\approx 0.96780, \quad \quad u= \frac{3\sqrt{2}}{44} - \frac{\sqrt{30}}{132} \approx 0.05492 .
\end{equation}
Finally, inner products conditions for adjacent and non-adjacent points corresponding to vertices in $B$ yields 
\[ \begin{split}
a^2 + 12ab + 9b^2 &= - \frac{1}{4}\\
3a^2 + 8ab + 11b^2&=  \frac{1}{6},
\end{split} \]
with solution
\begin{equation}\label{ab_values2}a=- \frac{5\sqrt{30}}{88} + \frac{\sqrt{2}}{88}\approx -0.29513, \quad \quad b= \frac{7\sqrt{30}}{264} +\frac{\sqrt{2}}{88} \approx 0.16130 .\end{equation}
 
Utilizing the adjacency rules and the values from \eqref{xy_values2}, \eqref{zu_values2}, \eqref{ab_values2} we verify directly that the inner products of a point $p\in P$ with points in $A$ and $B$, are respectively
\[zy+ 6ux + 15uy =-\frac{1}{4}, \ \ zx+ 5ux + 16uy=\frac{1}{6},\] 
\[ za+ 6ua + 15ub=-\frac{1}{4},\ \  zb+7ua + 14ub=\frac{1}{6}.\]
Similarly, we verify the inner products between points in $ A$ and $B$:
\[ 3xa + 4ya + 3xb + 12yb= -\frac{1}{4},\quad xa + 5xb+  6ya + 10yb=\frac{1}{6},\]
and the unit norm conditions
\[6x^2+16y^2 =1,\quad \quad 7a^2+15b^2=1.\]

The frequency of the corresponding inner products from a fixed point in $C_{275}$ of $162$ inner products of $1/6$ and $112$ inner products of $-1/4$ follows from the property of the MacLaughlin graph and the itemized adjacency rules above.

We have also directly verified that for any fixed point $w \in C_{275}$ the inner products of the point $-w$ to the points of the code satisfy the conditions of Theorem \ref{PULB}, so the code $(22,275,4)$ attains the bound \eqref{PolarizationULB}.

\subsection{The sharp codes $(21,112,3)$ and $(21,162,3)$ -- first and second subconstituent of the  MacLaughlin graph}  \label{162} The next two codes are derived from $(22,275,4)$ in the context discussed by Delsarte, Goethals, and Seidel in \cite[Section 8]{DGS} (see also \cite{CGS}). 
Let us fix a point, say $p_{22}$ in the subsection above, and use orthogonal transformation that sends $p_{22}$ in 
$p_{275}:=(0,0,\dots,0,1)\in \mathbb{S}^{21}$, then there are two derived codes, when $\varepsilon=1/6$ and $\varepsilon=-1/4$, denoted with $A$ and $B$, respectively, with $|A|=162$ and $|B|=112$. The first configuration $A$ leads to a code $C_{162}:=(21,162,3)$, a sharp code with $162$ points, inner products $-2/7$ and $ 1/7$, that is a $3$-design. It is related to the second subconstituent of the MacLaughlin Graph, a strongly regular graph srg$(162,56,10,24)$ with vertices the points in the code and adjacency rule when the inner product is $-2/7$.

The second configuration $B$ leads to a code $C_{112}:=(21,112,3)$, one of the infinite family of sharp codes (the last row of Table 2), coming from
combinatorial configurations called generalized quadrangles (see, e.g. \cite{CGS} and references therein). Recall that 
this is a family of spherical $3$-designs in 
dimensions $n=q(q^3+1)/(q+1)$, with cardinalities $N=(q+1)(q^3+1)$ and inner products $-1/q$ and $1/q^2$, where $q$ is a 
power of a prime number. The sharp code $C_{112}$ is obtained when $q=3$, with $n=21$, $N=112$ and the inner products 
are $-1/3$ and $1/9$. It is related to the first subconstituent of the MacLaughlin Graph, a strongly regular graph srg$(112,30,2,10)$ with vertices the points in the code and adjacency rule when the inner product is $-1/3$.

We first focus on $C_{162}$. The points of $A$ are contained in an affine hyperplane orthogonal to $p_{275}$ at a distance $d=1/6$ to the origin and a radius of the circumscribed hypersphere determined by $A$ being $R=\sqrt{35}/6$. Let $e$ denote the center of mass of $A$. 

We fix any point $b\in B$. As the complementary to the MacLaughlin graph is strongly regular \linebreak srg$(275,162,105,81)$, the non-adjacent vertices $p_{275}$ and $b$ have exactly $81$ common neighbors in $A$, say $a_1,\dots ,a_{81}$. Let $g$ be the center of mass of these $81$ points. Denote the rest of the points in $A$ with $a_{82},\dots,a_{162}$. As we have already constructed $C_{275}$, direct verification shows that
\[ \frac{(g-e)\cdot (a_i -e)}{\|g-e\| R}=\frac{1}{\sqrt{21}}, \ i=1,\dots,81,\quad \frac{(g-e)\cdot (a_j -e)}{\|g-e\| R}=-\frac{1}{\sqrt{21}}, \  j=82,\dots,162.\]
Therefore, the point $\widetilde{x}:=(g-e)/\|g-e\|$, as well as its antipodal $-\widetilde{x}$ defines attaining the PULB \eqref{PolarizationULB} 
by the sharp code $C_{162}$. For each $b$ we have different split and since the second subconstituent of the MacLaughlin graph has 112 such splits, these are all of the minima.

The optimality of $C_{112}$ with respect to \eqref{PolarizationULB} is obtained similarly, this time using that two non-adjacent 
vertices in sqr$(275,112,30,56)$ have $56$ common neighbors. \hfill $\Box$

\subsection{The sharp code $(23,552,5)$  -- equiangular lines} Like $(7,56,5)$ the sharp code $C_{552}:=(23,552,5)$ is obtained by 
taking $276$ equiangular lines in $\mathbb{R}^{23}$ intersecting the unit sphere $\mathbb{S}^{22}$. 
Along with the icosahedron, these are the only configurations known to attain the absolute bound (see \cite{DGS}, \cite[Chapter 11]{GR}). 
Since the $(22,275,4)$ (MacLaughlin) code is derived from $C_{552}$, we can recover $C_{552}$ from the points of 
$C_{275}=\{x_1,x_2,\dots,x_{275}\}$ as follows. Let 
$\xi_0:=(0,\dots,0,1)\in \mathbb{S}^{22}$ be one of the points of $C_{552}$. Then the closest $275$ points will be $\xi_i:=(x_i2\sqrt{6}/5,1/5)$, $i=1,\dots,275$, where the first $22$ are coming from $P$, the next $77$ from $A$ and the last $176$ from $B$ in subsection \ref{275}. The other $276$ points are antipodal to the already present points, namely $\xi_{i+275}=-\xi_i$, $i=1,\dots,275$ and $\xi_{551}=(0,\dots,0,-1)=-\xi_0$.

Select $\widetilde{x}:=(1/5,\dots,1/5,-\sqrt{3}/5)\in \mathbb{S}^{22}$. Using \eqref{zu_values2} we compute
\[\widetilde{x}\cdot \xi_i =\frac{2\sqrt{6}(z+21u)}{25}-\frac{\sqrt{3}}{25}=\frac{\sqrt{3}}{5},\quad i=1,\dots,22.\]
Similarly, from \eqref{xy_values2} we get 
\[\widetilde{x}\cdot \xi_i =\frac{2\sqrt{6}(6x+15y)}{25}-\frac{\sqrt{3}}{25}=-\frac{\sqrt{3}}{5},\quad i=23,\dots,99.\]
Utilizing \eqref{ab_values2} we obtain
\[\widetilde{x}\cdot \xi_i =\frac{2\sqrt{6}(7a+15b)}{25}-\frac{\sqrt{3}}{25}=0,\quad i=100,\dots,275.\]
Along with $\widetilde{x}\cdot \xi_0=-\sqrt{3}/5$, and the antipodallity of the code we obtain that  there are 
$100$ points whose inner product with $\widetilde{x}$ is $-\sqrt{3}/5$, $352$ whose inner product is $0$ 
and $100$ with $\sqrt{3}/5$, so $C_{552}$ with $\widetilde{x}$ attains the PULB bound \eqref{PolarizationULB}.

For the next two codes, $(23,4600,7)$  and  $(22,891,5)$, we find it easier to determine $\widetilde{x}$ as they are embedded in $\mathbb{S}^{23}$.

\subsection{The sharp code $(23,4600,7)$ and the Leech lattice} \label{4600} Our next sharp code, $C_{4600}:=(23,4600,5)$, is a kissing 
configuration arising from the Leech lattice sharp code $C_L:=(24,196560,11)$. 

We first describe $C_L$ as the code of $196560$ minimal vectors in the Leech lattice normalized on $\mathbb{S}^{23}$
using the extended binary Golay code $\overline{\mathcal{C}}_{23}$ of length 24 and cardinality 4096. In $C_L$, we have
\begin{itemize}
\item[A.] $2^2 \cdot {24 \choose 2}=1104$ points of type 1: $(1/\sqrt{32})\left[ (\pm 4)^2 \ 0^{22}\right]$;
\item[B.] $24 \cdot 4096=98304$ points of type 2: $(1/\sqrt{32})\left[ (\mp 3)^1 (\pm 1)^{23} \right]$, where the upper signs follow the $1$'s in $\overline{\mathcal{C}}_{23}$; 
\item[C.] $2^7 \cdot 759=97152$ points of type 3: $(1/\sqrt{32})\left[ (\pm 2)^8 0^{16}\right]$, where for every one of the $759$ 
octads in $\overline{\mathcal{C}}_{23}$ we place $\pm 2$ with even number of negative signs.
\end{itemize}

We now fix one of the points in $C_L$, namely $a:=(1/\sqrt{32})[4,4,0,\dots,0]$. 
The $4600$ points $X:=\{ x_i\}_{i=1}^{4600} \subset C_L$, that have inner product $a\cdot x_i=1/2$ are:
\begin{itemize}
\item[1.] $88$ points of the type 1, namely $(1/\sqrt{32})[4,0,(\pm 4)^1\ 0^{21}]$ and $(1/\sqrt{32})[0,4,(\pm 4)^1\ 0^{21}]$;
\item[2.] $2^5 \cdot 77=2464$ points of type 2, namely $(1/\sqrt{32})[2,2,(\pm 2)^6 \ 0^{16}$, where in the $77$ octads having $1$ as first and second coordinates we place $2$ and the other six have $\pm 2$ with even number of negative signs;
\item[3.] $2\cdot 2^{10}=2048$ points of the type 3, namely $(1/\sqrt{32})[3,1,(\pm 1)^{22}]$ and $(1/\sqrt{32})[1,3,(\pm 1)^{22}]$ with signs chosen to follow the codewords in $\overline{\mathcal{C}}_{23}$ starting with $[1,0,\dots]$ or $[0,1,\dots]$.  
\end{itemize}
Note that the centroid of $X$ is $m_X :=(1/\sqrt{32})[2,2,,0,\dots,0]$ and it is a center of a hypersphere of radius $r=\sqrt{3}/2$ that circumscribes $X$. We can then express  
\[ C_{4600} = \left\{ y_i:=2(x_i - m_X)/\sqrt{3} \right\}_{i=1}^{4600}\subset {\mathbb S}^{23}.\]

The PULB quadrature data from Table 2 suggests a sub-configuration $Y$ in $X$ of $275$ points. Considering the structure 
of the McLaughlin graph and the $(22,275,4)$ code described in Subsection \ref{275}, we select the following $22+77+176$ 
construction of the McLaughlin graph-type sub-configuration $Y$. Let
\[P:= \left\{ \frac{1}{\sqrt{24}} [-2,2,(+4)^1,0^{21} ]\right\}, A:= \left\{ \frac{1}{\sqrt{24}}[0,0, (+2)^6 0^{16}] \right\}, B:= \left\{ \frac{1}{\sqrt{24}}[-1,1, (-1)^7 (+1)^{15}] \right\} ,\]
where $|P|=22$, $|A|=77$ and $|B|=176$ (in $B$ we select $-1$ for $1$ in the $176$ binary words of the Golay code $\mathcal{C}_{23}$ 
of length $23$ that have weight $7$ and start with $0$). 

We determine that the center of mass of $Y$ is 
\[ m_Y=\frac{1}{\sqrt{24}}\left[ -\frac{4}{5},\frac{4}{5},\frac{2}{5},\dots,\frac{2}{5}\right] \in \mathbb{R}^{24}.\]
Indeed, the first coordinate is $(22\cdot(-2)+77\cdot0+176\cdot(-1))/(275\sqrt{24})=-4/5\sqrt{24}$ and the second is found similarly. For the the rest of the coordinates we utilize the itemized rule 1 in Subsection \ref{275} to compute that they all will be equal to  $(4+21\cdot 2+56\cdot 0+120 \cdot 1+56\cdot (-1))/(275\sqrt{24})=2/5\sqrt{24}$. Note that $\|m_Y\|=1/\sqrt{5}$. Define
\[ \widetilde{x}:=\frac{1}{\sqrt{30}}[-2,2,1,\dots,1] \in \mathbb{S}^{23}.\]
We evaluate the inner product with points of type 1
\[\widetilde{x}\cdot  \frac{1}{\sqrt{24}} [-2,2,(+ 4)^1,0^{21}] =\frac{1}{\sqrt{5}}, \quad \widetilde{x}\cdot  \frac{1}{\sqrt{24}} [-2,2,(- 4)^1,0^{21}] =\frac{1}{3\sqrt{5}} ;\]
with type 2
\[\widetilde{x}\cdot  \frac{1}{\sqrt{24}} [0,0,(\pm 2)^6,0^{16}] =\left\{ \pm \frac{1}{\sqrt{5}}, \ \pm \frac{1}{3\sqrt{5}} \right\};\]
and type 3 (note that the $-1$'s in the last $22$ coordinates could be $7$, $11$, or $15$)
\[\widetilde{x}\cdot  \frac{1}{\sqrt{24}} [\pm 1,\mp 1,(\pm 1)^{22}] =\left\{ \pm \frac{1}{\sqrt{5}}, \ \pm \frac{1}{3\sqrt{5}} \right\}.\]
 
We directly verify the frequencies of $275$ and $2025$. Alternatively, the uniqueness of the quadrature rule implies the frequencies of the corresponding inner products. Consequently, the code $C_{4600}$ with $\widetilde{x}$ attains the PULB \eqref{PolarizationULB}. \hfill $\Box$

\subsection{The sharp code $(22,891,5)$} We describe the last sharp code in this analysis, $C_{891}:=(22,891,5)$ as derived of
$C_{4600}$. We fix a point $b:=(1/\sqrt{24})[-2,2,4,0,\dots,0]\in C_{4600}$ and determine the set $Z:=\{z_i\}_{i=1}^{891}$ of 
$891$ points from $C_{4600}$ that are closest to $b$, namely that $b\cdot z_i=1/3$. There are: 
\begin{itemize}
\item[a.] $43$ type 1 points, of which $42$ points $(1/\sqrt{24})[-2,2,0,(\pm 4)^1 0^{20}]$ and 1 point $(1/\sqrt{24})[2,-2,4,\linebreak 0,\dots, 0]$;
\item[b.] $2^4 21=336$ type 2 points $(1/\sqrt{24})[0,0,2,(\pm 2)^5 0^{16}]$ (where there are even number of $-2$'s);
\item[c.] $2^9=512$ type 3 points $(1/\sqrt{24})[-1,1,1,(\pm 1)^{21}]$. 
\end{itemize}
The center of mass of $Z$ is  
\[ m_Z=\frac{1}{3}b=\frac{1}{3\sqrt{24}}[-2,2,4,0,\dots,0] \in \mathbb{R}^{24} .\]

As $b$ is also an element of the McLaughlin graph-type code $Y$ constructed in Subsection \ref{4600}, its second subconstituent $U$ with cardinality $|U|=162$ (see Subsection \ref{162}) will be a subset of $Z$. The code $U$ will contain $21$ points $(1/\sqrt{24})[-2,2,0,(+4)^1 0^{20}]$ from $P$, $21$ points  $(1/\sqrt{24})[0,0,2,(+2)^5 0^{16}]$ from $A$, and $120$ points $(1/\sqrt{24})[-1,1,1,(-1)^7 1^{14}]$. We compute the coordinates of the centroid of $U$ to be
\[ m_U = \frac{1}{3\sqrt{24}}\left[ -3,3,3,1,\dots,1 \right] \in \mathbb{R}^{24} .\]

The calculation of the first three coordinates is obvious. For the evaluation of the other coordinates we observe that of the $21$ words in $A$ that share $2$ as a third coordinate there are exactly $5$ words that will share $2$ in any subsequent coordinate, and for any such coordinate there are $40$ words with $-1$ and $80$ words with $1$. 

We now consider the hypersphere with center $m_Z$ and radius $r=2\sqrt{2}/3$ that circumscribes $Z$ as in Subsection \ref{4600} express
\[ C_{891} =\left\{ v_i := 3(z_i-m_Z)/2\sqrt{2} \right\} \subset \mathbb{S}^{23}.\]
We select 
\[ \widetilde{x}:=\frac{m_U-m_Z}{\|m_U-m_Z\|}= \frac{1}{\sqrt{24}}\left[ -1,1,-1,1,\dots,1 \right] \in \mathbb{S}^{23}.\]
Evaluating the inner products of $\widetilde{x}$ with the points in $C_{891}$ we observe that $I(\widetilde{x},C_{891})=\{-1/3,0,1/3\}$, which coupled with the uniqueness of the quadrature rule shows that the frequencies of these inner products will be $162$, $567$, and $162$, respectively (we have also verified it directly). As a conclusion, the sharp code $C_{891}$ with the so chosen point $\widetilde{x}$ attains \eqref{PolarizationULB}.

\section{Max-min polarization of $T$-designs}\label{T-designs}
In this section we develop the framework for deriving enhanced, second-level PULB. This will allow us to find the universal minima for the remaining sharp codes listed in Table 2, but the infinite family to be considered in subsequent manuscript along with the $600$-cell.

\subsection{Spherical $T$-designs}\label{T-designs-1}

The following fundamental lemma sheds light on the relation between the moments \eqref{Moments}, 
the spherical harmonics, and the discrete potentials for Gegenbauer polynomials.

\begin{lemma}\label{BHSLemma} (e.g. \cite[Lemma 5.2.2]{BHS}, \cite{BDK}) Let $C\subset\mathbb{S}^{n-1}$ and $k$ be a positive integer. 
Let $\mathbb{H}_k^n$ be the subspace of spherical harmonics of degree $k\in \mathbb{N}$ and $Z(n,k)$ denote the dimension of this subspace. 
Then the following are equivalent:
\begin{itemize}
\item[(a)] The moment $M_k^n(C) =0$.
\item[(b)] For any orthogonal basis  $\{ Y_{kj}(x) \}_{j=1}^{Z(n,k)}$ of \, $\mathbb{H}_k^n$ 
\[\sum_{x\in C} Y_{k,j}(x)=0, \quad 1\leq j\leq Z(n,k).\]
\item[(c)] For any $x\in \mathbb{S}^{n-1}$,
\[ \sum_{y\in C} P_k^{(n)}(x\cdot y)=0.\]
\end{itemize}
\end{lemma}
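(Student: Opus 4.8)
The lemma asserts the equivalence of three conditions characterizing the vanishing of a single moment $M_k^n(C)$. The plan is to establish the cycle $(a)\Leftrightarrow(b)$ and $(b)\Leftrightarrow(c)$ using the addition formula for spherical harmonics, which is the engine connecting the Gegenbauer polynomial $P_k^{(n)}$ to any orthonormal basis of $\mathbb{H}_k^n$. Recall that this formula reads
\[
P_k^{(n)}(x\cdot y) = \frac{1}{Z(n,k)}\sum_{j=1}^{Z(n,k)} Y_{k,j}(x)\, Y_{k,j}(y)
\]
for any orthonormal basis $\{Y_{k,j}\}$ of $\mathbb{H}_k^n$ (with the normalization $P_k^{(n)}(1)=1$). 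This single identity will drive all three implications, so the first step is to fix notation for such a basis and state the addition formula precisely.

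For $(a)\Leftrightarrow(b)$, I would start from the definition $M_k^n(C)=\sum_{x,y\in C}P_k^{(n)}(x\cdot y)$ and substitute the addition formula. Interchanging the (finite) sums yields
\[
M_k^n(C) = \frac{1}{Z(n,k)}\sum_{j=1}^{Z(n,k)}\left(\sum_{x\in C}Y_{k,j}(x)\right)\left(\sum_{y\in C}Y_{k,j}(y)\right) = \frac{1}{Z(n,k)}\sum_{j=1}^{Z(n,k)}\left(\sum_{x\in C}Y_{k,j}(x)\right)^2.
\]
Since this is a sum of squares (scaled by a positive constant), it vanishes if and only if every inner sum $\sum_{x\in C}Y_{k,j}(x)$ vanishes, which is exactly condition (b). I should remark that this equivalence is independent of the choice of orthonormal basis: any two orthonormal bases are related by an orthogonal transformation, so the vanishing of all coordinate sums in one basis forces it in any other.

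For $(b)\Leftrightarrow(c)$, I would again invoke the addition formula, now reading it as $\sum_{y\in C}P_k^{(n)}(x\cdot y)=\frac{1}{Z(n,k)}\sum_j Y_{k,j}(x)\bigl(\sum_{y\in C}Y_{k,j}(y)\bigr)$. If (b) holds, each inner sum is zero, so the right side vanishes for every $x\in\mathbb{S}^{n-1}$, giving (c). Conversely, if (c) holds then the function $x\mapsto \sum_j c_j Y_{k,j}(x)$ with $c_j:=\sum_{y\in C}Y_{k,j}(y)$ is identically zero on the sphere; since $\{Y_{k,j}\}$ is a linearly independent set in $\mathbb{H}_k^n$, each coefficient $c_j$ must vanish, which is (b). The one point deserving care here is the linear independence step: I would justify it by noting that the $Y_{k,j}$ form a basis of $\mathbb{H}_k^n$, so their only vanishing linear combination is the trivial one. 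The main (though modest) obstacle is purely bookkeeping — ensuring the normalization constants in the addition formula match the convention $P_k^{(n)}(1)=1$ used throughout the paper, and being explicit that the equivalences do not depend on which orthogonal basis is chosen, as stated in (b).
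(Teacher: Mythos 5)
Your proof is correct, and it is essentially the canonical argument: the paper itself states this lemma without proof, citing \cite[Lemma 5.2.2]{BHS}, and the proof there proceeds exactly as yours does, via the addition formula, the sum-of-squares identity $M_k^n(C)=\frac{1}{Z(n,k)}\sum_j\bigl(\sum_{x\in C}Y_{k,j}(x)\bigr)^2$ for (a)$\Leftrightarrow$(b), and linear independence of the $Y_{k,j}$ for (b)$\Leftrightarrow$(c). Your normalization of the addition formula is consistent with $P_k^{(n)}(1)=1$, and your observation that the conditions are basis-independent (since (b) is equivalent to $\sum_{x\in C}Y(x)=0$ for all $Y\in\mathbb{H}_k^n$) disposes of the only subtlety in the statement.
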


The following characterization of spherical $T$-designs follows immediately from Lemma \ref{BHSLemma}. 

\begin{proposition} \label{prop23}
Let $T\subset \mathbb N$ be a nonempty index set. Then $C \subset \mathbb{S}^{n-1}$ is a spherical $T$-design if and only if  \eqref{defin_f} holds for all $f \in \mathcal{P}_{T}$.\end{proposition}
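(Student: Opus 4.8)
The plan is to deduce Proposition~\ref{prop23} directly from Lemma~\ref{BHSLemma}, using the fact that the defining condition \eqref{defin_f} is linear in $f$ and that the polynomials $P_i^{(n)}$, $i \in T \cup \{0\}$, span $\mathcal{P}_T$. First I would observe that, by definition, $C$ is a spherical $T$-design precisely when $M_i^n(C) = 0$ for every $i \in T$. The aim is therefore to show that this collection of moment conditions is equivalent to the assertion that $U_f(x,C) = f_0 |C|$ holds for all $x \in \mathbb{S}^{n-1}$ and all $f \in \mathcal{P}_T$.

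For the forward direction, suppose $C$ is a $T$-design, so $M_i^n(C) = 0$ for each $i \in T$. By the equivalence (a)$\Leftrightarrow$(c) in Lemma~\ref{BHSLemma}, this gives $\sum_{y \in C} P_i^{(n)}(x \cdot y) = 0$ for every $x \in \mathbb{S}^{n-1}$ and every $i \in T$. Now take an arbitrary $f \in \mathcal{P}_T$ and write its Gegenbauer expansion $f = \sum_{i \in T \cup \{0\}} f_i P_i^{(n)}$, which is valid since $\mathcal{P}_T = \text{span}\{P_i^{(n)} : i \in T \cup \{0\}\}$ by \eqref{P_T}. Summing over $y \in C$ and using $P_0^{(n)} \equiv 1$ together with the vanishing of the higher terms, I obtain
\[
U_f(x,C) = \sum_{y \in C} f(x \cdot y) = f_0 \sum_{y \in C} 1 + \sum_{i \in T} f_i \sum_{y \in C} P_i^{(n)}(x \cdot y) = f_0 |C|,
\]
which is exactly \eqref{defin_f}. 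Note that the constant coefficient $f_i$ for $i=0$ is indeed $f_0 = \int_{-1}^1 f\, d\mu_n$, consistent with the normalization $P_i^{(n)}(1)=1$.

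For the converse, suppose \eqref{defin_f} holds for all $f \in \mathcal{P}_T$. I would apply it to the particular choices $f = P_i^{(n)}$ for each $i \in T$; since $i \geq 1$, the orthogonality of the Gegenbauer polynomials against the constant gives constant coefficient $(P_i^{(n)})_0 = 0$, so \eqref{defin_f} reduces to $\sum_{y \in C} P_i^{(n)}(x \cdot y) = 0$ for all $x$. Invoking (c)$\Rightarrow$(a) of Lemma~\ref{BHSLemma} then yields $M_i^n(C) = 0$ for each $i \in T$, so $C$ is a $T$-design. I do not expect any serious obstacle here: the proposition is essentially an unwinding of definitions, and the only substantive input is Lemma~\ref{BHSLemma}, which converts the single-variable potential identity into the bilinear moment condition. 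The one point that needs care is the bookkeeping of the constant coefficient across the two directions—ensuring that $P_0^{(n)}$ contributes the $f_0|C|$ term while each $P_i^{(n)}$ with $i \in T$ has vanishing $0$-th Gegenbauer coefficient—but this is immediate from orthogonality and the normalization conventions fixed in \eqref{geg-exp}.
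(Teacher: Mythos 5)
Your proof is correct and follows exactly the route the paper intends: the paper states that Proposition~\ref{prop23} ``follows immediately from Lemma~\ref{BHSLemma},'' and your argument is precisely that deduction, combining the equivalence (a)$\Leftrightarrow$(c) of Lemma~\ref{BHSLemma} with linearity of $U_f(x,C)$ in $f$ and the Gegenbauer expansion on $\mathcal{P}_T$. Your careful handling of the constant coefficient in both directions is the only bookkeeping required, and you have done it correctly.
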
 

We will use this property to derive enhanced PULBs. 

\begin{definition} \label{LowerClass} 
Let $T\subset \mathbb{N}$ be an index set and $h$ be a potential function. Denote by $\mathcal{L}(n,T,h)$ 
the class of {\em lower admissible polynomials} $f(t)$ such that 
\begin{itemize}
\item[(A1)] $f\in \mathcal{P}_T$;
\item[(A2)] $f(t) \leq h(t)$ for every $t \in [-1,1]$.
\end{itemize}
\end{definition}

Utilizing this definition  and Proposition \ref{prop23} we derive the following Delsarte-Yudin type lower bound on the polarization potential of $h(t)$.

\begin{proposition} \label{prop32}
Let $h(t)$ be a potential function, $T\subset \mathbb{N}$ be an index set, and $f \in {\mathcal L}(n,T, h)$ be a lower admissible polynomial. Then for all spherical $T$-designs $C \subset \mathbb{S}^{n-1}$ the following lower bound holds:
\[ U_h(x,C) \geq  U_f(x,C) = f_0|C|, \quad x\in \mathbb{S}^{n-1}. \]
Consequently,
\begin{equation}\label{DY_LB}
\mathcal{Q}_h(C) \geq \max_{f\in {\mathcal L}(n,T,h)} \{ f_0|C| \}. 
\end{equation}
\end{proposition}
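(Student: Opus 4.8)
The plan is to observe that Proposition \ref{prop32} is essentially a direct consequence of the two defining properties of lower admissible polynomials combined with the design characterization in Proposition \ref{prop23}. First I would fix a spherical $T$-design $C \subset \mathbb{S}^{n-1}$ and an arbitrary lower admissible polynomial $f \in \mathcal{L}(n,T,h)$, and then fix an arbitrary point $x \in \mathbb{S}^{n-1}$. The entire argument rests on comparing the $h$-potential $U_h(x,C)$ against the $f$-potential $U_f(x,C)$ pointwise, and then using the design property to evaluate the latter exactly.

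The key steps, in order, are as follows. Since each inner product $x\cdot y$ lies in $[-1,1]$, property (A2) gives $f(x\cdot y) \leq h(x\cdot y)$ for every $y \in C$; summing over $y \in C$ yields the inequality $U_f(x,C) = \sum_{y\in C} f(x\cdot y) \leq \sum_{y\in C} h(x\cdot y) = U_h(x,C)$. Next, because $f \in \mathcal{P}_T$ by property (A1) and $C$ is a $T$-design, Proposition \ref{prop23} (equivalently, equation \eqref{defin_f}) applies to give the exact evaluation $U_f(x,C) = f_0|C|$, where $f_0$ is the constant Gegenbauer coefficient. Chaining these two facts produces $U_h(x,C) \geq U_f(x,C) = f_0|C|$, which is the displayed pointwise bound. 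Crucially, the right-hand side $f_0|C|$ is independent of $x$, so taking the infimum over $x \in \mathbb{S}^{n-1}$ on the left gives $\mathcal{Q}_h(C) = \inf_{x}U_h(x,C) \geq f_0|C|$. Finally, since this holds for every $f \in \mathcal{L}(n,T,h)$, I would maximize over all such admissible $f$ to obtain the consequence \eqref{DY_LB}.

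There is essentially no serious obstacle here; the result is a clean Delsarte--Yudin-type linear programming bound, and the proof is a short two-inequality chain. The only point requiring mild care is confirming that Proposition \ref{prop23} is being invoked correctly: its conclusion is that \eqref{defin_f} holds for \emph{all} $f \in \mathcal{P}_T$ precisely when $C$ is a $T$-design, so property (A1) is exactly the hypothesis needed to activate the exact quadrature identity. I would also note that continuity and finiteness of $h$ on $[-1,1)$ guarantee the infimum defining $\mathcal{Q}_h(C)$ is well-defined (possibly $+\infty$ only through $h(-1)$), so the bound is meaningful. The passage from the pointwise inequality to the infimum bound is the one genuinely quantified step, but it is immediate because the lower bound $f_0|C|$ carries no dependence on $x$.
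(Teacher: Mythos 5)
Your proof is correct and follows exactly the route the paper takes: the paper's own proof simply declares the result ``immediate from Definition \ref{T-design}, Definition \ref{LowerClass} and Proposition \ref{prop23},'' and your two-step chain (pointwise comparison via (A2), exact evaluation of $U_f$ via (A1) and Proposition \ref{prop23}, then infimum over $x$ and maximum over $f$) is precisely the argument being left implicit there. Nothing further is needed.
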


\begin{proof}
This is immediate from Definition \ref{T-design}, Definition \ref{LowerClass} and Proposition \ref{prop23}.
\end{proof}

The following corollary is an immediate consequence of \eqref{DY_LB}.

\begin{corollary} \label{cor4-5} Let $T\subset \mathbb{N}$ be an index set and suppose that the collection of spherical $T$-designs $C \subset \mathbb{S}^{n-1}$ of cardinality $|C|=N$ is non-empty. Then
\begin{equation} \label{Polarization_LB1}\underline{\mathcal Q}_h(n,N,T) \geq N\max_{f\in {\mathcal L}(n,T,h)}   f_0.
\nonumber
\end{equation}
\end{corollary}

This gives rise to the following linear program:
\begin{align}\label{LP_ULB}
{\rm given}: & \quad n,  \ T, \ h \nonumber \\
{\rm maximize}: & \quad f_0  \\
{\rm subject \ to:} &\quad  f \in {\mathcal L}(n,T,h) \nonumber
\end{align}

\subsection{PULB-spaces}

In \cite{BDHSS_P} we were able to solve this linear program explicitly in the important case of spherical $\tau$-designs, i.e. $T=\{1,\dots,\tau\}$, 
for certain classes of potentials, leading to Theorem \ref{PULB}.  Gauss-Jacobi quadrature formulas play an essential role in the solution.

Thus, the main ingredients in our approach to the program \eqref{LP_ULB} will be the existence of an appropriate quadrature rule 
exact on $\mathcal{P}_T$ and an interpolation polynomial from $\mathcal{P}_T$ staying below the potential $h$. In the case of spherical $\tau$-designs the featured existence on $\mathcal{P}_\tau$ is guaranteed by the classical Gauss-Jacobi quadratures and the Hermite interpolation polynomials. 

In Section \ref{pos-derivative} we showed that most known sharp codes attain the bound \eqref{PolarizationULB} for potentials $h$ 
such that $h^{(\tau+1)} \geq 0$. Yet, there are notable exceptions, such as the icosahedron and the kissing configurations of $E_8$ 
and the Leech lattice that do not attain this first level PULB \eqref{PolarizationULB}. 
However, we will show that they attain an enhanced PULB for properly chosen 
index set $T\supset\{1,\dots,\tau\}$. 

Our framework for deriving an enhanced PULB on $\mathcal{P}_T\supset \mathcal{P}_\tau$ will follow similar approach as the one for potential energy ULB in \cite{BDHSSMathComp}. We start with the concept of a PULB-space.

\begin{definition}
\label{def-PULB-space}
Let $n\geq 2$ be a positive integer and $T\subset \mathbb{N}$ an index set. A space $\mathcal{P}_T$ is a {\em PULB-space} for dimension $n$ associated with the index set $T$
if the following two conditions hold:

\rm{(i)} there exists a quadrature rule with nodes in $[-1,1]$ and positive weights that is exact on $\mathcal{P}_T$.

\rm{(ii)} for any absolutely monotone function $h$ there exists some  $f\in \mathcal{L}(n,T,h)$ that agrees with $h$ at the nodes of the quadrature rule from (i).

\end{definition}

It follows from Theorem~\ref{PULB} that $\mathcal{P}_\tau$ is a PULB-space for the set $T=\{1,2,\ldots,\tau\}$.  Although the index set   can 
be infinite (e.g., for  antipodal codes), we will consider only finite $T$ in this paper.

When $\mathcal{P}_T$ is a PULB-space, then we may solve the linear program \eqref{LP_ULB}. 
\begin{theorem} \label{PULBQR}
Suppose for some $T\subset \mathbb{N}$ that $\mathcal{P}_T$ is a PULB-space for dimension $n$ with quadrature rule given by  nodes $\{\alpha_i\}_{i=1}^\ell$ and weights $\{\rho_i\}_{i=1}^\ell$.    For any potential $h$ absolutely monotone on $[-1,1]$, we have
\begin{equation} \label{PULB1}
\max_{f \in {\mathcal L}(n,T,h)}f_0 =  \sum_{i=1}^{\ell} \rho_i h(\alpha_i).
\end{equation}
\end{theorem}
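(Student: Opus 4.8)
The plan is to establish the identity \eqref{PULB1} by proving two inequalities. For the upper bound $\max_{f \in \mathcal{L}(n,T,h)} f_0 \leq \sum_{i=1}^\ell \rho_i h(\alpha_i)$, I would take an arbitrary lower admissible polynomial $f \in \mathcal{L}(n,T,h)$ and apply the quadrature rule guaranteed by condition (i) of Definition \ref{def-PULB-space}. Since $f \in \mathcal{P}_T$, the quadrature is exact on $f$, so
\[ f_0 = \sum_{i=1}^\ell \rho_i f(\alpha_i). \]
By the admissibility condition (A2), we have $f(\alpha_i) \leq h(\alpha_i)$ at each node, and because the weights $\rho_i$ are positive, it follows that $f_0 = \sum_{i=1}^\ell \rho_i f(\alpha_i) \leq \sum_{i=1}^\ell \rho_i h(\alpha_i)$. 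Taking the supremum over all $f \in \mathcal{L}(n,T,h)$ preserves this bound. This direction is essentially immediate once the positivity of the weights and the exactness of the quadrature are invoked.

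For the reverse inequality, the key is condition (ii) of Definition \ref{def-PULB-space}, which asserts that for the absolutely monotone potential $h$ there exists a specific polynomial $f^\ast \in \mathcal{L}(n,T,h)$ that agrees with $h$ at all the quadrature nodes, i.e.\ $f^\ast(\alpha_i) = h(\alpha_i)$ for $i = 1, \dots, \ell$. Applying the exact quadrature to this interpolating polynomial gives
\[ f^\ast_0 = \sum_{i=1}^\ell \rho_i f^\ast(\alpha_i) = \sum_{i=1}^\ell \rho_i h(\alpha_i), \]
which demonstrates that the value $\sum_{i=1}^\ell \rho_i h(\alpha_i)$ is actually attained by an admissible polynomial. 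Hence $\max_{f \in \mathcal{L}(n,T,h)} f_0 \geq f^\ast_0 = \sum_{i=1}^\ell \rho_i h(\alpha_i)$, and combining with the upper bound yields the claimed equality. Note that existence of the maximizer is furnished directly by (ii), so the supremum in the linear program \eqref{LP_ULB} is in fact a maximum.

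The principal subtlety here is not in the argument for \eqref{PULB1} itself, which is a clean two-sided linear-programming duality estimate, but rather in the fact that the theorem has already offloaded all the analytic difficulty into the hypothesis that $\mathcal{P}_T$ is a PULB-space. In other words, the genuine work lies in verifying Definition \ref{def-PULB-space} for concrete index sets $T$ --- producing the quadrature rule with positive weights exact on $\mathcal{P}_T$ (part (i)) and constructing the interpolating lower admissible polynomial $f^\ast$ agreeing with $h$ at the nodes (part (ii)). The construction of $f^\ast$ in (ii) is where absolute monotonicity of $h$ is crucial: one expects a Hermite-type interpolation scheme that matches $h$ (and appropriate derivatives) at the nodes so that the remainder has a definite sign, guaranteeing $f^\ast \leq h$ on all of $[-1,1]$. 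For the present theorem, however, I would simply invoke the PULB-space hypothesis and carry out the short duality argument above; the heavy lifting is deferred to the verification of Definition \ref{def-PULB-space} for the specific spaces $\mathcal{P}_T$ of interest, notably the Skip 1-Add 2 space with basis $\{P_j^{(n)}\}_{j=0}^{2k+2} \setminus \{P_{2k}^{(n)}\}$ treated later in the paper.
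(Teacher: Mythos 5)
Your proof is correct and follows essentially the same two-sided argument as the paper: exactness of the quadrature plus positivity of the weights and condition (A2) give the upper bound, and the interpolating polynomial from Definition \ref{def-PULB-space}(ii) attains it. Your closing remarks on where the real work lies (verifying the PULB-space hypothesis) are accurate but not part of the proof itself.
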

\begin{proof}Let $h$ be absolutely monotone on $[-1,1]$ and  $f\in \mathcal{L}(n,T,h)$.   Then, 
$$f_0=\sum_{i=1}^{\ell} \rho_i f(\alpha_i)\le \sum_{i=1}^{\ell} \rho_i h(\alpha_i). $$
On the other hand, choosing  $f\in \mathcal{L}(n,T,h)$ that agrees  with $h$ at the nodes  $\{\alpha_i\}_{i=1}^\ell$
shows \eqref{PULB1}.
\end{proof}

\begin{theorem} \label{facetThm}  Suppose $\mathcal{P}_T$ is a PULB-space for dimension $n$ with (unique) quadrature rule given by  nodes $\{\alpha_i\}_{i=1}^\ell$ and weights $\{\rho_i\}_{i=1}^\ell$ and let $\alpha_\ell=\max_i\alpha_i$.  Furthermore, suppose $C\subset \mathbb{S}^{n-1}$ is a spherical $T$-design that attains the PULB 
\begin{equation}\label{tightPULB1}
\mathcal{Q}_h(C)=  \sum_{i=1}^{\ell} \rho_i h(\alpha_i).
\end{equation}

Then  $y\in \mathbb{S}^{n-1}$ is a minimum point of $U_h( \cdot,C)$ (i.e., $U_h( y,C)=\mathcal{Q}_h(C)$)    if and only if the collection of inner products $I(y, C)$, coincides with  $\{\alpha_i \}_{i=1}^\ell$ 
and the frequencies of these inner products are $\{N\rho_i\}_{i=1}^\ell$. 
Furthermore, if $y$ is   a minimum point of $U_h( \cdot,C)$, then:
\begin{itemize}
\item[{(i)}] $y$ is a furthest point in $\mathbb{S}^{n-1}$ from $C$; i.e., $d(y,C)=\max_{x\in\mathbb{S}^{n-1}} d(x,C)$, where $d(x,C):=\min_{z\in C} \|x-z\|$. 
\item[{(ii)}]  the subsets $C_{y,\alpha_i}:=\{z\in C: z\cdot y=\alpha_i\}$ have centroids $\frac{1}{N\rho_i}\sum_{z\in C_{y,\alpha_i}}z=\alpha_i y$, $i=1,\dots,\ell$. 
\item[{(iii)}]  
The convex hull of $C_{y,\alpha_\ell}$ forms a facet of the convex hull of $C$ (and hence $N\rho_\ell \ge n$). 
\end{itemize}
\end{theorem}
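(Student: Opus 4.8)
The plan is to base everything on the equality case of the Delsarte--Yudin bound of Proposition~\ref{prop32}. Let $f\in\mathcal{L}(n,T,h)$ be the admissible polynomial agreeing with $h$ at the nodes $\{\alpha_i\}_{i=1}^\ell$, furnished by property (ii) of Definition~\ref{def-PULB-space}; by Theorem~\ref{PULBQR} its constant coefficient is $f_0=\sum_i\rho_i h(\alpha_i)$, and since $C$ is a $T$-design, $U_f(\cdot,C)\equiv f_0N$ is constant on $\mathbb{S}^{n-1}$ and equals the attained minimum. For the implication ($\Leftarrow$) I would simply compute $U_h(y,C)=\sum_i(N\rho_i)h(\alpha_i)=Nf_0=\mathcal{Q}_h(C)$ from the assumed inner-product distribution, so $y$ minimizes $U_h(\cdot,C)$. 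For ($\Rightarrow$), writing $U_h(y,C)-U_f(y,C)=\sum_{z\in C}\big(h(y\cdot z)-f(y\cdot z)\big)$ as a sum of nonnegative terms that vanishes forces $h(y\cdot z)=f(y\cdot z)$ for every $z\in C$, so each inner product lies in the zero set of $h-f$. Using that $f$ is the Hermite-type interpolant underlying the PULB-space (whence $h-f\ge 0$ with equality only at the nodes) I conclude $I(y,C)\subseteq\{\alpha_i\}$. Finally, because $C$ is a $T$-design, $I(y,C)$ with its relative frequencies is a positive quadrature exact on $\mathcal{P}_T$ (the remark after Definition~\ref{QuadRule}); by the asserted uniqueness of the PULB-space quadrature it must equal $\{(\alpha_i,\rho_i)\}_{i=1}^\ell$, giving frequencies $N\rho_i$.

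For part (i) I would restate the claim metrically: since $I(y,C)=\{\alpha_i\}$ with $\alpha_\ell=\max_i\alpha_i$, one has $d(y,C)^2=2-2\alpha_\ell$, so it suffices to show $\max_{z\in C}x\cdot z\ge\alpha_\ell$ for every $x\in\mathbb{S}^{n-1}$. This is exactly the Fazekas--Levenshtein covering-radius phenomenon noted after Theorem~\ref{PULB}: for any $x$ the set $I(x,C)$ is a positive quadrature exact on $\mathcal{P}_T$, and the largest node of any such quadrature is at least the largest node $\alpha_\ell$ of the extremal (Gauss-type) PULB-space quadrature. Hence $d(x,C)\le d(y,C)$ for all $x$, i.e.\ $y$ is a furthest point.

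For the centroid identity (ii) I would fix $i$, set $v_i:=\sum_{z\in C_{y,\alpha_i}}z$, and test against directions. Splitting an arbitrary $w$ as $w=(w\cdot y)y+w^\perp$ with $w^\perp\perp y$, the component along $y$ is immediate by counting, $y\cdot v_i=\sum_{z\in C_{y,\alpha_i}}(y\cdot z)=N\rho_i\alpha_i$, so it remains to prove $w^\perp\cdot v_i=0$. I would write $w^\perp\cdot v_i=\sum_{z\in C}(w^\perp\cdot z)\,\ell_i(y\cdot z)$, where $\ell_i(\alpha_j)=\delta_{ij}$, and expand $\ell_i$ in Gegenbauer polynomials. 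Each summand $(w^\perp\cdot z)P_m^{(n)}(y\cdot z)$, as a function of $z$, is a sum of spherical harmonics of degrees $m-1$ and $m+1$ only, with vanishing degree-$0$ part by the odd symmetry in $w^\perp$; applying Lemma~\ref{BHSLemma} to the $T$-design $C$ then annihilates each such sum, provided the degrees $m\pm1$ appearing (for $m\le\deg\ell_i=\ell-1$) lie in $T$. This yields $v_i=N\rho_i\alpha_i\,y$.

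Part (iii) then follows cleanly from (i) and (ii). The hyperplane $H=\{w:w\cdot y=\alpha_\ell\}$ supports $\mathrm{conv}(C)$ and meets $C$ exactly in $C_{y,\alpha_\ell}$, so $\mathrm{conv}(C_{y,\alpha_\ell})$ is a face. Were its affine hull of dimension $\le n-2$, then, since by (ii) its centroid is $\alpha_\ell y\in H$, I could pick a unit $u\perp y$ orthogonal to that affine hull, giving $u\cdot z=0$ for all $z\in C_{y,\alpha_\ell}$; along $y_s=\cos(s)y+\sin(s)u$ every nearest point would satisfy $y_s\cdot z=\alpha_\ell\cos s<\alpha_\ell$ while the remaining points keep $y_s\cdot z<\alpha_\ell$ for small $s$, so $\max_z y_s\cdot z<\alpha_\ell$ and $d(y_s,C)>d(y,C)$, contradicting (i). Thus $\mathrm{conv}(C_{y,\alpha_\ell})$ is $(n-1)$-dimensional, a facet, and hence $N\rho_\ell=|C_{y,\alpha_\ell}|\ge n$. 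I expect the technical heart to be (ii): the reduction to $w^\perp\cdot v_i=0$ is clean, but verifying that the harmonic degrees produced by $(w^\perp\cdot z)\ell_i(y\cdot z)$ all fall inside $T$ --- in particular that the omitted degree $2k$ of the Skip-1-Add-2 set never appears (which holds because the node count $\ell$ stays below $2k$ in the relevant cases) --- is the delicate bookkeeping on which the argument rests; pinning down the zero set of $h-f$ in the forward implication is the other point requiring care.
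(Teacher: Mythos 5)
Your harmonic-decomposition argument for (ii) and your self-contained convexity argument for (iii) are genuinely different from the paper's route: the paper proves (i) and (ii) with the Riesz potentials $h_m(t)=(2-2t)^{-m/2}$ --- for (i) sandwiching $N\,d(x,C)^{-m}\ \ge\ U_{h_m}(x,C)\ \ge\ N\sum_i\rho_i h_m(\alpha_i)\ \ge\ N\rho_\ell\, d(y,C)^{-m}$ and letting $m\to\infty$, and for (ii) passing to the limit $m\to\infty$ in the Lagrange-multiplier condition $\nabla U_{h_m}(y,C)=C_m y$, peeling off one inner-product shell at a time --- while (iii) is simply quoted from \cite{BS21}. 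Where your arguments apply they are correct, but as written the proposal has three genuine gaps, all tied to the fact that the theorem is stated for an \emph{arbitrary} PULB-space $\mathcal{P}_T$.

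First, in (i) you invoke the claim that the largest node of \emph{any} positive quadrature exact on $\mathcal{P}_T$ is at least $\alpha_\ell$, attributing it to the Fazekas--Levenshtein remark after Theorem \ref{PULB}. That remark concerns only $\tau$-designs and the first-level Gauss--Jacobi-type quadrature; for a general PULB-space (e.g.\ the Skip 1--Add 2 spaces) the claim is established nowhere before Theorem \ref{facetThm} and is essentially part (i) itself, so your argument is circular unless you prove it --- which one can do, but precisely by the Riesz-limit device above (compare $\sum_j w_j h_m(\beta_j)\ge f_0=\sum_i\rho_i h_m(\alpha_i)\ge\rho_\ell h_m(\alpha_\ell)$ and let $m\to\infty$). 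Second, you fix the frequencies via ``the asserted uniqueness of the PULB-space quadrature''; but the paper's uniqueness corollary establishes uniqueness only among rules satisfying \emph{both} conditions of Definition \ref{def-PULB-space} (i.e.\ rules for which \eqref{PULB1} holds), not among all positive quadratures exact on $\mathcal{P}_T$ --- of which there are many, already for $\mathcal{P}_{2k-1}$ --- so it does not apply to the quadrature generated by $I(y,C)$. The correct fix, once $I(y,C)\subseteq\{\alpha_i\}$ is known, is to apply exactness to the Lagrange basis polynomials $\ell_j$ (this needs $\ell_j\in\mathcal{P}_T$), giving $|C_{y,\alpha_j}|/N=\rho_j>0$ for every $j$. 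Third, your proof of (ii) requires every harmonic degree $1,\dots,\ell$ to lie in $T$, as you yourself flag; this holds for $\mathcal{P}_\tau$ and for $\mathcal{P}_{T^k}$ with $k\ge2$, but it is not a hypothesis of the theorem, so the stated generality is not reached (the paper's gradient argument needs no such condition). A smaller point: the assertion that $h-f$ vanishes only at the nodes, which your forward implication rests on, also needs justification (Hermite error formula plus a nondegeneracy assumption on $h$).
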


\begin{proof} 
(i) Suppose to the contrary that $d(y,C)<d(x,C)$ for some ${x\in\mathbb{S}^{n-1}} $. For the (absolutely monotone) Riesz potentials $h_m (t):=(2-2t)^{-m/2}$ with parameter $m>0$ we have
\[ \frac{N}{d(x,C)^m}\geq \sum_{z\in C}h_m (x\cdot z) \geq N\sum_{i=1}^{\ell} \rho_i h_m(\alpha_i) \geq \frac{N\rho_\ell}{d(y,C)^m}.\]
%
Taking an $m$-th root and letting $m\to \infty$ we derive a contradiction.

(ii) We first prove the statement when $i=\ell$. As a global minimum, $y$ is also a local minimum of $U_{h_m}(x,C)$ on $\mathbb{S}^{n-1}$, so the gradient is a constant multiple of $y$, i.e.
\[ \nabla U_{h_m} (y,C)=\sum_{x\in C}\frac{2y-2x}{\|  y-x\|^{m+2}} = C_m y.\]
To find the constant $C_m$ we take a dot product with $y$ and use that $\|x-y\|^2=2-2x\cdot y$ for any two vectors on the unit sphere to conclude that 
\[C_m=\sum_{x\in C} \frac{1}{\|y-x\|^m}.\]
Thus, we derive that 
\begin{equation}\label{BalancedEq} \sum_{x\in C_{y,\alpha_\ell}}\frac{2y-2x}{d(y,C)^{2}} +\sum_{x\in C\setminus C_{y,\alpha_\ell} }\frac{(2y-2x)d(y,C)^m}{\|  y-x\|^{m+2}}=(N\rho_\ell ) y+ \sum_{x\in C\setminus C_{y,\alpha_\ell}} \frac{d(y,C)^m}{\|y-x\|^m} y .\end{equation}
As we let $m\to \infty$, the second terms on both sides approach zero and we arrive at the formula
\[ \frac{1}{N\rho_\ell}\sum_{x\in C_{y,\alpha_\ell}} x = \left( 1-\frac{d(y,C)^2}{2} \right) y=\alpha_\ell y.\]
This shows the statement for $i=\ell$. Applying this equation to \eqref{BalancedEq} we get 
\[ \sum_{x\in C\setminus C_{y,\alpha_\ell} }\frac{(2y-2x)d(y,C)^m}{\|  y-x\|^{m+2}}=\sum_{x\in C\setminus C_{y,\alpha_\ell}} \frac{d(y,C)^m}{\|y-x\|^m} y \]
and after dividing both sides by $d(y,C)^m$ we can apply similar arguments to conclude the validity for all $i=1,\dots,\ell$.

(iii) This assertion follows from \cite[Lemma 1]{BS21}.
\end{proof}

We shall use the Riesz potentials $h_m$ introduced in the proof above in the following corollary.

\begin{corollary} If $\mathcal{P}_T$ is a PULB-space for  dimension $n$ then the corresponding quadrature rule from Definition \ref{def-PULB-space} is unique.
\end{corollary}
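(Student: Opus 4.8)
The plan is to show that any two quadrature rules witnessing the PULB-space property must coincide, by exploiting the rigidity forced by Theorem \ref{PULBQR} together with the Riesz potentials $h_m$ flagged above. First I would suppose that $\mathcal{P}_T$ admits two quadrature rules satisfying Definition \ref{def-PULB-space}, say $\{(\alpha_i,\rho_i)\}_{i=1}^{\ell}$ and $\{(\beta_j,\sigma_j)\}_{j=1}^{\ell'}$, both with positive weights and nodes in $[-1,1]$. The key observation is that Theorem \ref{PULBQR} identifies the number $\max_{f\in\mathcal{L}(n,T,h)} f_0$ intrinsically, independently of which admissible quadrature rule is used to compute it. Consequently, for every potential $h$ that is absolutely monotone on $[-1,1)$ we must have
\[ \sum_{i=1}^{\ell} \rho_i h(\alpha_i) = \max_{f\in\mathcal{L}(n,T,h)} f_0 = \sum_{j=1}^{\ell'} \sigma_j h(\beta_j). \]
Thus the two positive discrete measures $\sum_i \rho_i \delta_{\alpha_i}$ and $\sum_j \sigma_j \delta_{\beta_j}$ integrate every absolutely monotone function identically, and the whole problem reduces to showing that this forces them to be equal.

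Second, I would feed into this identity the Riesz potentials $h_m(t)=(2-2t)^{-m/2}$, each of which is absolutely monotone on $[-1,1)$, and extract the nodes and weights one at a time by the same $m\to\infty$ asymptotics used in the proof of Theorem \ref{facetThm}. Writing $\gamma:=\max_i \alpha_i$ and $\gamma':=\max_j \beta_j$, the term of largest node dominates each sum because $2-2t$ is smallest there; taking $m$-th roots and letting $m\to\infty$ yields $(2-2\gamma)^{-1/2}=(2-2\gamma')^{-1/2}$, hence $\gamma=\gamma'$. Multiplying the identity through by $(2-2\gamma)^{m/2}$ and again letting $m\to\infty$ makes every term with a strictly smaller node vanish and leaves the weights at $\gamma$, so the weight attached to $\gamma$ is the same on both sides. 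Subtracting these equal leading contributions produces an identity of exactly the same form but with one fewer node on each side, and iterating peels off all nodes and weights in decreasing order.

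The main thing to get right is the bookkeeping of this peeling induction: one must argue that the process cannot terminate with leftover nodes on only one side, since otherwise a strictly positive sum of Riesz potentials would be forced to equal zero, a contradiction. This shows $\ell=\ell'$ and that the two rules agree node-by-node and weight-by-weight, and it is precisely here that the positivity of the weights is essential. A cleaner alternative that avoids the induction altogether is to note that the polynomials $(1+t)^k$, $k\ge 0$, are themselves absolutely monotone on $[-1,1]$ and span all polynomials, so the displayed identity holds for every polynomial; two finitely supported positive measures with identical moments then coincide by a Lagrange-interpolation argument on the union of their supports. Either route establishes uniqueness of the quadrature rule.
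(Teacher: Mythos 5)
Your main argument is essentially the paper's own proof: both start from Theorem \ref{PULBQR} (equation \eqref{PULB1}), which forces
\[
\sum_{i=1}^{\ell}\rho_i h(\alpha_i)=\sum_{j=1}^{\ell'}\sigma_j h(\beta_j)
\]
for every absolutely monotone $h$, and both then feed in the Riesz potentials $h_m(t)=(2-2t)^{-m/2}$ and peel off nodes and weights from the largest node downward. The only cosmetic difference is how the top nodes are matched: you take $m$-th roots and let $m\to\infty$ to get $\gamma=\gamma'$, while the paper multiplies the identity by $(2-2\alpha_{\ell}')^{m/2}$ and derives the contradiction $\rho_{\ell}'=0$ when the largest nodes differ; the subsequent cancellation-and-iterate step, including your remark that positivity of the weights prevents leftover nodes on one side, is exactly the paper's argument. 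Your closing alternative, however, is a genuinely different route that the paper does not take: since each $(1+t)^k$, $k\ge 0$, is absolutely monotone on $[-1,1]$ and these span all polynomials, linearity of the identity in $h$ gives equality of the integrals of every polynomial against the two atomic measures $\sum_i\rho_i\delta_{\alpha_i}$ and $\sum_j\sigma_j\delta_{\beta_j}$, and evaluating on the Lagrange basis supported on the union of the two node sets forces the measures to coincide. That version is shorter and avoids all asymptotic bookkeeping and the peeling induction; the paper's Riesz-potential route is more hands-on but stays entirely within the single one-parameter family $h_m$ already used in the proof of Theorem \ref{facetThm}. Both are correct.
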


\begin{proof} Suppose there are two quadrature rules with nodes $\{\alpha_i \}_{i=1}^\ell, \{\alpha_i^\prime \}_{i=1}^{\ell^\prime}$ (labeled in increasing order) and weights $\{\rho_i \}_{i=1}^\ell$ and $ \{\rho_i^\prime \}_{i=1}^{\ell^\prime}$, respectively. Then \eqref{PULB1} implies that for any absolutely monotone $h$ we have
\begin{equation}\label{UniquenessQR1} \sum_{i=1}^{\ell} \rho_i h(\alpha_i)= \sum_{i=1}^{\ell^\prime} \rho_i^\prime h(\alpha_i^\prime). \end{equation}
We first prove that $\alpha_\ell=\alpha_\ell^\prime$ and $\rho_\ell=\rho_\ell^\prime$. If $\alpha_\ell\not =\alpha_\ell^\prime$, without loss of generality we may assume $\alpha_\ell<\alpha_\ell^\prime$. Using $h(t)=h_m(t)$ in \eqref{UniquenessQR1} we get 
\begin{equation}\label{UniquenessQR2} \sum_{i=1}^{\ell} \rho_i (2-2\alpha_i)^{-m/2}= \sum_{i=1}^{\ell^\prime} \rho_i^\prime (2-2\alpha_i^\prime)^{-m/2}. \end{equation}
Multiplying by $(2-2\alpha_\ell^\prime)^{m/2}$ the equation \eqref{UniquenessQR2} and letting $m\to +\infty$ we derive $\rho_\ell^\prime=0$, which is a contradiction with the assumption that the weights are positive. If $\alpha_\ell=\alpha_\ell^\prime$, the same process will yield that $\rho_\ell=\rho_\ell^\prime$. Once we establish that $\alpha_\ell=\alpha_\ell^\prime$ and $\rho_\ell=\rho_\ell^\prime$, we can cancel the corresponding terms in \eqref{UniquenessQR2} and proceed analogously to deduce the uniqueness.
\end{proof}

\subsection{Quadrature rule on $\mathcal{P}_T$} 
We now extend the Gauss-Jacobi quadrature on $\mathcal{P}_\tau$ to a larger subspace of polynomials.  Motivated by the cases  of sharp codes for which the PULB is not attained, we shall consider $\tau$-designs for {\em odd}  $\tau=2k-1$ that have additional zero moments of higher order. Recall that for any integer $k$ the {\em Gauss-Jacobi quadrature rule} 
\[ \int_{-1}^1 f(t)\, d\mu_n(t)=\sum_{i=1}^k \rho_i f(\alpha_i) \]
holds true for all $f\in \mathcal{P}_{2k-1}$. Here the nodes $\{ \alpha_i\}, i=1,\dots, k$ are the roots of the Gegenbauer polynomial $P_k^{(n)}(t)$ and $\rho_i$ are found as in \eqref{RhoWeights}.

Specifically, we consider  {\em skip one-add two} index set of the form
\[T^k:=\{ 1,\dots, 2k-1, 2k+1, 2k+2\}=\{ 1,2,\dots,  2k+2\}\setminus\{2k\}.\]
The polynomial subspace is given as
\begin{equation}\label{Skip1Add2}
\mathcal{P}_{T^k} := \mathcal{P}_{2k-1}
\oplus  \mbox{span} \left(P_{2k+1}^{(n)}, P_{2k+2}^{(n)}\right).
\end{equation}

We shall focus on necessary conditions for the existence of a quadrature rule exact on  $\mathcal{P}_{T^k}$. In the process, we will establish existence and uniqueness. 

\begin{lemma} \label{lem4-7}
Suppose $n\geq 3$. Any quadrature rule that is exact on $\mathcal{P}_{T^k}$ has at least $k+1$ distinct nodes.
\end{lemma}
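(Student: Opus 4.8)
The plan is to show that $k$ nodes are too few by first pinning down what such a rule would have to be, and then exhibiting a single element of $\mathcal{P}_{T^k}$ that it cannot integrate. Suppose $\{\alpha_i,\rho_i\}_{i=1}^{\ell}$ is a quadrature rule with positive weights and $\ell$ distinct nodes in $[-1,1]$ that is exact on $\mathcal{P}_{T^k}$, and assume for contradiction that $\ell\le k$. Since $\{1,\dots,2k-1\}\subset T^k$, the rule is exact on $\mathcal{P}_{2k-1}$. Set $\omega(t):=\prod_{i=1}^{\ell}(t-\alpha_i)$. If $\ell\le k-1$, then $\omega^2\in\mathcal{P}_{2k-2}\subset\mathcal{P}_{T^k}$ is nonnegative, nonzero, and vanishes at every node, so exactness gives $0<\int_{-1}^{1}\omega^2\,d\mu_n=\sum_i\rho_i\omega^2(\alpha_i)=0$, a contradiction. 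Hence $\ell=k$, and the classical uniqueness of Gauss quadrature applies: testing exactness against $\omega\,p$ with $p\in\mathcal{P}_{k-1}$ shows $\int_{-1}^1\omega\,p\,d\mu_n=\sum_i\rho_i\omega(\alpha_i)p(\alpha_i)=0$, so $\omega\perp\mathcal{P}_{k-1}$ and $\omega$ is the monic Gegenbauer polynomial $\pi_k$ of degree $k$. Thus the nodes are the zeros of $P_k^{(n)}$ and the weights are the Gauss--Jacobi weights.

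\textbf{Reducing the obstruction to $P_{2k+2}^{(n)}$.} The rule must also be exact on $P_{2k+1}^{(n)}$ and $P_{2k+2}^{(n)}$. Because $\mu_n$ is symmetric, the Gauss nodes occur in pairs $\pm\alpha$ with equal weights, so for the odd polynomial $P_{2k+1}^{(n)}$ we automatically get $\sum_i\rho_i P_{2k+1}^{(n)}(\alpha_i)=0=\int P_{2k+1}^{(n)}\,d\mu_n$; exactness there carries no information. The entire contradiction must therefore come from $P_{2k+2}^{(n)}$, and the goal becomes to prove
\[ \sum_{i=1}^{k}\rho_i\,P_{2k+2}^{(n)}(\alpha_i)\ \ne\ 0\ =\ \int_{-1}^{1}P_{2k+2}^{(n)}\,d\mu_n, \]
which violates exactness on $\mathcal{P}_{T^k}\ni P_{2k+2}^{(n)}$ and completes the proof.

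\textbf{Evaluating the error.} Here I would use that the nodes are the zeros of $\pi_k$. Dividing gives $P_{2k+2}^{(n)}=\pi_k^2\,g+r$ with $\deg g=2$, $\deg r\le 2k-1$, both $g,r$ even; since $\pi_k(\alpha_i)=0$ and the rule integrates $r$ exactly,
\[ \sum_{i=1}^{k}\rho_i P_{2k+2}^{(n)}(\alpha_i)=\int_{-1}^{1}r\,d\mu_n=-\int_{-1}^{1}\pi_k^2\,g\,d\mu_n. \]
Writing $g=c_{2k+2}(t^2+\widehat\delta)$ (with $c_{2k+2}>0$ the leading coefficient of $P_{2k+2}^{(n)}$) and using the three--term recurrence $t\pi_j=\pi_{j+1}+\lambda_j\pi_{j-1}$ together with $\|\pi_j\|^2=\lambda_j\|\pi_{j-1}\|^2$, the cross terms drop out and the right-hand side collapses to the nonzero constant $-c_{2k+2}\|\pi_k\|^2$ times $\lambda_k+\lambda_{k+1}+\widehat\delta$. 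Computing $\widehat\delta$ from the coefficients of $t^{2k}$ in $\pi_{2k+2}$ and of $t^{k-2}$ in $\pi_k$ (both equal to minus a partial sum of the $\lambda_j$) yields the clean identity
\[ \lambda_k+\lambda_{k+1}+\widehat\delta=\sum_{j=1}^{k-1}\lambda_j-\sum_{j=k+2}^{2k+1}\lambda_j. \]

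\textbf{Main obstacle: uniform nonvanishing.} The one genuinely delicate point is to show this difference is never zero, for all $k\ge1$ and all $n\ge3$ simultaneously. I would settle it with the explicit ultraspherical value
\[ \lambda_j-\tfrac14=\frac{1-(n-3)^2}{4(2j+n-2)(2j+n-4)}, \]
so that $\lambda_j-\tfrac14$ has constant sign and $|\lambda_j-\tfrac14|$ strictly decreases in $j$. The difference equals $\sum_{j=1}^{k-1}(\lambda_j-\tfrac14)-\sum_{j=k+2}^{2k+1}(\lambda_j-\tfrac14)-\tfrac14$. For $n=4$ all deviations vanish and it is exactly $-\tfrac14$; for $n=3$ the deviations are positive with $\sum_{j\ge1}(\lambda_j-\tfrac14)=\tfrac18$, forcing the value below $\tfrac18-\tfrac14=-\tfrac18$; and for $n\ge5$ the deviations $a_j:=\tfrac14-\lambda_j>0$ decrease, whence $\sum_{j=k+2}^{2k+1}a_j-\sum_{j=1}^{k-1}a_j\le a_{2k+1}<\tfrac14$ (the last inequality reducing to $|n-3|<4k+n-1$), so the value is again strictly negative. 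In every case the quantity is $<0$, hence nonzero, giving $\sum_i\rho_i P_{2k+2}^{(n)}(\alpha_i)>0$ and the desired contradiction. Everything outside this sign estimate is standard Gauss machinery; organizing the three regimes of $n$ into one uniform argument is the only real work.
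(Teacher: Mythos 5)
Your proof is correct, and its core is genuinely different from the paper's. Both arguments agree on the easy reductions: ruling out $\ell\le k-1$ with the squared node polynomial, and identifying a $k$-node rule exact on $\mathcal{P}_{2k-1}\subset\mathcal{P}_{T^k}$ as the Gauss--Jacobi rule with nodes at the zeros of $P_k^{(n)}$. Where you diverge is the obstruction at $\ell=k$. The paper works with the product $f=P_k^{(n)}P_{k+2}^{(n)}$: by the annihilating property $QR(f)=0$, while exactness on $\mathcal{P}_{T^k}$ collapses the Gegenbauer expansion of $f$ to $QR(f)=f_{2k}\,QR(P_{2k}^{(n)})$; the contradiction then rests on two facts, namely $QR(P_{2k}^{(n)})\neq 0$ (else the $k$-node rule would be exact beyond degree $2k-1$) and $f_{2k}>0$, the latter proved via Gasper's nonnegative-linearization theorem plus a leading-coefficient computation ruling out $f_{2k}=0$. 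You instead evaluate the quadrature error on $P_{2k+2}^{(n)}$ head-on: dividing by $\pi_k^2$ gives $\sum_i\rho_iP_{2k+2}^{(n)}(\alpha_i)=-\int\pi_k^2g\,d\mu_n=-c_{2k+2}\|\pi_k\|^2\bigl(\sum_{j=1}^{k-1}\lambda_j-\sum_{j=k+2}^{2k+1}\lambda_j\bigr)$, and you show this recurrence-coefficient combination is strictly negative for all $k\ge1$, $n\ge3$ via the explicit deviation $\lambda_j-\tfrac14=\frac{1-(n-3)^2}{4(2j+n-2)(2j+n-4)}$, handled in the three regimes $n=3$ (telescoping bound $\tfrac18$), $n=4$ (all deviations vanish), and $n\ge5$ (monotone pairing plus $a_{2k+1}<\tfrac14$); I checked these identities and estimates, including the formula for $\widehat\delta$ and the norm relation $\|\pi_j\|^2=\lambda_j\|\pi_{j-1}\|^2$, and they are all sound. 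The trade-off: the paper's route leans on an external structural result (Gasper) and is shorter once that is granted, while yours is self-contained (nothing beyond the ultraspherical three-term recurrence) and delivers strictly more, namely the sign $QR(P_{2k+2}^{(n)})>0$, quantifying how a putative $k$-node rule fails rather than merely that it fails. One stylistic note: your remark that exactness on $P_{2k+1}^{(n)}$ is automatic by symmetry is not logically needed (exhibiting the failure on $P_{2k+2}^{(n)}$ alone suffices), but it correctly explains why the even-degree polynomial must carry the obstruction.
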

\begin{proof} Let $\{ \beta_i,\theta_i\}_{i=1}^\ell$ be such a quadrature rule. If $\ell < k$, 
then $f(t):=(t-\beta_1 )^2 \dots (t-\beta_\ell )^2 \in \mathcal{P}_{T^k}$ and hence
\[
\int_{-1}^1 (t-\beta_1 )^2 \dots (t-\beta_\ell )^2\, d\mu_n(t) =\sum_{i=1}^\ell \theta_i f(\beta_i) = 0,
\]
which is impossible. 

If $\ell=k$, then since $\mathcal{P}_\tau\subset \mathcal{P}_{T^k}$, the quadrature rule is the Gauss-Jacobi (or Gegenbauer) quadrature with nodes the zeros $\alpha_i$, $i=1,\dots,k$, of $P_k^{(n)}(t)$, and respective weights $\rho_i$, $i=1,\dots,k$. Let us introduce the two linear functionals acting on polynomials
\[ I(f):=\int_{-1}^1 f(t)\, d\mu_n(t),\quad \quad \quad \quad QR(f):=\sum_{i=1}^k \rho_i f(\alpha_i). \]
Clearly, $I(P_i^{(n)})=QR(P_i^{(n)})=0$, $i\in T$. We also have that $I(P_{2k}^{(n)})=0$, which implies that $QR(P_{2k}^{(n)})\not=0$, 
for otherwise the quadrature rule will be exact on $\mathcal{P}_{2k+1}$ and the uniqueness of the Gauss-Jacobi quadrature will lead to a contradiction. Consider the polynomial 
\begin{equation}\label{ProductExpansion} 
f(t):=P_k^{(n)}(t)P_{k+2}^{(n)}(t)=f_{2k+2}P_{2k+2}^{(n)}(t)+f_{2k}P_{2k}^{(n)}(t)+\dots +f_2P_{2}^{(n)}(t).
\end{equation}
Observe that $f_0=I(f)=0$. From \cite{G} all the coefficients in the Gegenbauer expansion are nonnegative and $f_{2k+2}$ and $ f_2$ are strictly positive. 

We claim that $f_{2k}> 0$ as well. Suppose there is $k,n$ such that $f_{2k}=0$ in \eqref{ProductExpansion}. For brevity, let $\alpha:=n/2-1$. We shall utilize the first two terms of the expansion (see \cite[Equation (2.3)]{H})
\[ P_k^{(n)}(t)=\frac{(\alpha)_k}{(2\alpha)_k}(2t)^k-\frac{k(k-1)(\alpha)_{k-1}}{(2\alpha)_k}(2t)^{k-2}+\cdots \ ,\]
where $(\alpha)_k:=\alpha(\alpha+1)\cdot \dots \cdot (\alpha+k-1)$ is the Pochhammer symbol, to conclude that
\begin{equation} \label{GegExpansion}
f(t)=\frac{(\alpha)_k(\alpha)_{k+2}}{(2\alpha)_k(2\alpha)_{k+2}}\left[ (2t)^{2k+2}-  \left( \frac{(k+2)(k+1)}{\alpha+k+1} +\frac{ k(k-1)}{\alpha+k-1} \right) (2t)^{2k}\right] +\cdots \ .
\end{equation}
Similarly, \eqref{ProductExpansion} with $f_{2k}=0$ yields
\[ f(t)=f_{2k+2}\frac{(\alpha)_{2k+2}}{(2\alpha)_{2k+2}}\left[ (2t)^{2k+2}-  \frac{(2k+2)(2k+1)}{\alpha+2k+1}(2t)^{2k}\right] +\cdots \ .\]
Thus, we obtain the equation 
\[\frac{(2k+2)(2k+1)}{\alpha+2k+1}=\frac{(k+2)(k+1)}{\alpha+k+1} +\frac{ k(k-1)}{\alpha+k-1}.\]
It is easy to verify that the latter equation has two solutions, $\alpha=0$ and $\alpha=-k$. This contradiction proves the claim that $f_{2k}>0$.

To complete the proof, observe that $QR(f)=0$ because $P_k^{(n)}$ is annihilating polynomial for the quadrature rule. 
From \eqref{ProductExpansion} however, $QR(f)=f_{2k}QR(P_{2k}^{(n)})\not= 0$. The derived contradiction affirms the lemma.
\end{proof}

\subsection{Skip 1 -- Add 2 quadrature framework} We next exhibit a framework on how to obtain the quadrature nodes should we have a skip one-add two subspace depicted in \eqref{Skip1Add2}. We shall also establish that this subspace is a tight PULB-space. 

We continue with the necessary and sufficient conditions for existence of a quadrature in Definition \ref{def-PULB-space} (i). Suppose we have exactly $k+1$ quadrature nodes $\beta_1<\dots< \beta_{k+1} $ in $(-1,1)$
and weights $\gamma_i$, $i=1,2,\dots,k+1$ (the existence is established in Lemma \ref{PositiveWeights}), such that 
\begin{equation}\label{S1A2Q} \int_{-1}^1 f(t)d\mu_n (t)=\sum_{i=1}^{k+1}\gamma_i f(\beta_i) \end{equation}
for any $f\in \mathcal{P}_{T^k}$. Define the annihilating polynomial associated with the quadrature as
\[ q_{k+1}(t):=C_k (t-\beta_1)\dots(t-\beta_{k+1})=P_{k+1}^{(n)}(t)+aP_k^{(n)}(t)+bP_{k-1}^{(n)}(t)+u_{k-2}(t),\]
where $C_k$ is a normalizing constant and $u_{k-2}(t)$ is a polynomial of degree at most $k-2$. We shall show that $u_{k-2}(t)\equiv 0$. Indeed, for any $i=0,1,\dots,k-2$ the polynomial $q_{k+1}P_i^{(n)}\in \mathcal{P}_{T^k}$, so the quadrature \eqref{S1A2Q} holds true, implying that 
\[ 0=\int_{-1}^1 q_{k+1}(t) P_i^{(n)}(t)\, d\mu_n(t)=\int_{-1}^1 u_{k-2}(t) P_i^{(n)}(t)\, d\mu_n(t),\quad i=0,1,\dots,k-2.\]
Therefore, $u_{k-2}(t)\equiv 0$ and 
\begin{equation}\label{q_k+1} q_{k+1}(t)=P_{k+1}^{(n)}(t)+aP_k^{(n)}(t)+bP_{k-1}^{(n)}(t)\end{equation}
for some $a,b\in \mathbb{R}$.

To determine $a$ and $b$, choose any $f\in \mathcal{P}_{T^k}$. Then 
\[ f(t)=q_{k+1}(t)\left(d_{k+1}P_{k+1}^{(n)}(t)+d_k P_k^{(n)}(t)+d_{k-1}P_{k-1}^{(n)}(t)\right)+v_{2k-1}(t),\]
where $v_{2k-1}(t)$ is a polynomial of degree at most $2k-1$. Clearly, $f-v_{2k-1}\in \mathcal{P}_{T^k}$, so 
\begin{equation}\label{Qe} \int_{-1}^1 q_{k+1}(t)\left(d_{k+1}P_{k+1}^{(n)}(t)+d_k P_k^{(n)}(t)+d_{k-1}P_{k-1}^{(n)}(t)\right)\, d\mu_n (t)=0\end{equation}
from the quadrature \eqref{S1A2Q}. As $f$ is orthogonal to $P_{2k}^{(n)}$ we also have
\begin{equation}\label{Oe} \int_{-1}^1 q_{k+1}(t)\left(d_{k+1}P_{k+1}^{(n)}(t)+d_k P_k^{(n)}(t)+d_{k-1}P_{k-1}^{(n)}(t)\right)P_{2k}^{(n)}(t)\, d\mu_n (t)=0.\end{equation}

Next, since $f$ was arbitrary in $\mathcal{P}_{T^k}$ we can select two (basic) triples $(d_{k+1},d_k,d_{k-1})=(1,0,c)$ and $(d_{k+1},d_k,d_{k-1})=(0,1,d)$ 
so that equations \eqref{Qe} and \eqref{Oe} hold true. This yields a system of equations
\[
\begin{split}
\int_{-1}^1 \left(P_{k+1}^{(n)}(t)+aP_k^{(n)}(t)+bP_{k-1}^{(n)}(t)\right) \left(P_{k+1}^{(n)}(t)+cP_{k-1}^{(n)}(t)\right)\, d\mu_n (t)&=0\\
\int_{-1}^1 \left(P_{k+1}^{(n)}(t)+aP_k^{(n)}(t)+bP_{k-1}^{(n)}(t)\right) \left(P_{k}^{(n)}(t)+dP_{k-1}^{(n)}(t)\right)\, d\mu_n (t)&=0\\
\int_{-1}^1 \left(P_{k+1}^{(n)}(t)+aP_k^{(n)}(t)+bP_{k-1}^{(n)}(t)\right) \left(P_{k+1}^{(n)}(t)+cP_{k-1}^{(n)}(t)\right)P_{2k}^{(n)}(t)\, d\mu_n (t)&=0\\
\int_{-1}^1 \left(P_{k+1}^{(n)}(t)+aP_k^{(n)}(t)+bP_{k-1}^{(n)}(t)\right) \left(P_{k}^{(n)}(t)+dP_{k-1}^{(n)}(t)\right)P_{2k}^{(n)}(t)\, d\mu_n (t)&=0,
\end{split}
\]
which reduce to
\begin{equation}\label{System}
\begin{split}
\|P_{k+1}^{(n)}\|^2+bc\|P_{k-1}^{(n)}\|^2 &=0\\
a\|P_{k}^{(n)}\|^2+bd\|P_{k-1}^{(n)}\|^2 &=0\\
\langle (P_{k+1}^{(n)})^2,P_{2k}^{(n)}\rangle+(b+c)\langle P_{k+1}^{(n)}P_{k-1}^{(n)},P_{2k}^{(n)}\rangle&=0\\
a\langle (P_{k}^{(n)})^2,P_{2k}^{(n)}\rangle+d\langle P_{k+1}^{(n)}P_{k-1}^{(n)},P_{2k}^{(n)}\rangle&=0.\\
\end{split}
\end{equation}
Let $a_k$ denote the leading coefficient of the Gegenbauer polynomial of degree $k$, i.e. $P_{k}^{(n)}(t)=a_kt^k+\cdots$. 
The last equation of \eqref{System} then becomes 
\[ (aa_k^2+da_{k+1}a_{k-1})\langle t^{2k},P_{2k}^{(n)}\rangle=0.\]
Since $\langle t^{2k},P_{2k}^{(n)}\rangle \not=0$, we have that $d=-aa_k^2/(a_{k+1}a_{k-1})$. Utilizing the following formulas (see \cite{Lev} and \cite[Formula (5.2.3)]{BHS})
\[ \frac{a_{k+1}}{a_k}=\frac{n+2k-2}{n+k-2},\quad \|P_{k}^{(n)}\|^2=\frac{n+k-2}{n+2k-2}{k+n-2 \choose k}^{-1},\]
in the second and fourth equations of \eqref{System} we obtain two cases: (i) $a=d=0$; or (ii) $b=k/(k+n-2)$. The second case leads to a contradiction with the first and the third equations of \eqref{System} (see also \eqref{k/(k+n-2)} below). Therefore, we conclude that $a=d=0$, which reduces the system to 

\[ \begin{split}
\|P_{k+1}^{(n)}\|^2+bc\|P_{k-1}^{(n)}\|^2 &=0\\
\langle (P_{k+1}^{(n)})^2,P_{2k}^{(n)}\rangle+(b+c)\langle P_{k+1}^{(n)}P_{k-1}^{(n)},P_{2k}^{(n)}\rangle&=0,
\end{split} \]
which can be simplified utilizing \eqref{GegExpansion} to (recall that $ \alpha =n/2-1$)
\begin{equation} \label{bcSystem}
\begin{split}
bc&=-\frac{(k+1)k(\alpha+k-1)}{(2\alpha+k)(2\alpha+k-1)(\alpha+k+1)}\\
b+c&=-\frac{2\alpha(k+1)^2(\alpha+k-1)}{(2\alpha+k)(2\alpha+k-1)(\alpha+2k+1)}.
\end{split} 
\end{equation}

The lemma below shows that when we select $b$ as the positive of the two solutions of \eqref{bcSystem}, the polynomial $q_{k+1}(t)$ has $k+1$ distinct roots in $(-1,1)$. 

\begin{lemma}\label{bcRoots} 
Let $b>0>c$ be the two solutions of \eqref{bcSystem}. 
 Then 
\begin{equation}\label{NextLevelPoly}
q_{k+1}(t):=P_{k+1}^{(n)}(t)+bP_{k-1}^{(n)}(t) =\frac{(\alpha)_{k+1}2^{k+1}}{(2\alpha)_{k+1}}(t-\beta_1)\dots(t-\beta_{k+1}),
\end{equation}
where $\beta_i $ are distinct, symmetric about the origin, and in the interval $(-1,1)$. 
\end{lemma}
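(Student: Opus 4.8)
The plan is to separate the three assertions of the lemma---the factorization constant, the symmetry of the roots, and their being real, simple, and inside $(-1,1)$---and to dispatch the first two at once. Since $k+1$ and $k-1$ have the same parity, both $P_{k+1}^{(n)}$ and $P_{k-1}^{(n)}$ satisfy $P(-t)=(-1)^{k+1}P(t)$, so $q_{k+1}$ is an even or odd function and its zero set is automatically symmetric about the origin. The leading coefficient of $q_{k+1}$ is that of $P_{k+1}^{(n)}$, because the term $bP_{k-1}^{(n)}$ has degree $k-1$; by the expansion of $P_k^{(n)}$ displayed just before \eqref{GegExpansion} this coefficient equals $(\alpha)_{k+1}2^{k+1}/(2\alpha)_{k+1}$, which is exactly the constant in the factorization \eqref{NextLevelPoly}. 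It then remains only to prove that $q_{k+1}$ has $k+1$ distinct real roots in $(-1,1)$.

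First I would locate $k-1$ of the roots using the Gauss--Gegenbauer nodes. Let $\alpha_1<\cdots<\alpha_k$ be the zeros of $P_k^{(n)}$; they are symmetric about $0$, so $\alpha_1=-\alpha_k$. Because $\mu_n$ is symmetric, the three-term recurrence has no middle term, $tP_k^{(n)}(t)=A_kP_{k+1}^{(n)}(t)+C_kP_{k-1}^{(n)}(t)$, and evaluating at $t=1$ gives $A_k+C_k=1$, which with the leading-coefficient ratio forces $A_k=(n+k-2)/(n+2k-2)$, $C_k=k/(n+2k-2)$, hence $C_k/A_k=k/(n+k-2)$. Evaluating the recurrence at a node yields $P_{k+1}^{(n)}(\alpha_i)=-(C_k/A_k)P_{k-1}^{(n)}(\alpha_i)$, so that $q_{k+1}(\alpha_i)=\bigl(b-C_k/A_k\bigr)P_{k-1}^{(n)}(\alpha_i)$. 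Since the zeros of $P_{k-1}^{(n)}$ and $P_k^{(n)}$ strictly interlace, the values $P_{k-1}^{(n)}(\alpha_i)$ are nonzero and alternate in sign with $i$; consequently, once we know $b\ne C_k/A_k$, the numbers $q_{k+1}(\alpha_i)$ alternate in sign and $q_{k+1}$ has at least $k-1$ zeros in $(\alpha_1,\alpha_k)\subset(-1,1)$.

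Next I would produce the two outermost roots. As $\alpha_k$ lies to the right of every zero of $P_{k-1}^{(n)}$ and $P_{k-1}^{(n)}(1)=1$, we have $P_{k-1}^{(n)}(\alpha_k)>0$, while $q_{k+1}(1)=1+b>0$. Thus a sign change, and hence a root, occurs in $(\alpha_k,1)$ precisely when $q_{k+1}(\alpha_k)=\bigl(b-C_k/A_k\bigr)P_{k-1}^{(n)}(\alpha_k)<0$, that is, when $b<C_k/A_k=k/(n+k-2)$. By the symmetry already established, this simultaneously furnishes a root in $(-1,\alpha_1)$. Together with the $k-1$ interior roots this gives $k+1$ distinct roots of a degree-$(k+1)$ polynomial, so all of them are simple and all lie in $(-1,1)$, completing the proof.

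The main obstacle is therefore the single inequality $b<k/(n+k-2)$ for the positive root $b$ of \eqref{bcSystem}, and I would establish it without solving the quadratic explicitly. From the first line of \eqref{bcSystem} the product $bc<0$, so $c<0<b$, and $b,c$ are the roots of $g(X):=X^2-(b+c)X+bc$. Since $g$ is positive outside $[c,b]$ and $k/(n+k-2)>0>c$, it suffices to verify $g\bigl(k/(n+k-2)\bigr)>0$, which then forces $k/(n+k-2)>b$. Substituting $X=k/(n+k-2)$ and inserting the explicit right-hand sides of \eqref{bcSystem} reduces the claim to a rational inequality in $n$ and $k$ that I expect to collapse to a manifestly positive expression for $n\ge 3$, $k\ge 1$; this is consistent with the fact that $b=k/(n+k-2)$ is exactly the value ruled out as case (ii) in the derivation of \eqref{System}. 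This reduction is the only genuinely computational step.
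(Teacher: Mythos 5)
Your proposal is correct and follows essentially the same route as the paper's proof: the paper likewise rewrites $q_{k+1}$ via the three-term recurrence \eqref{3term}, proves the key inequality $b<k/(2\alpha+k)=k/(n+k-2)$ by evaluating the quadratic $(t-b)(t-c)$ at that point using Vieta's relations from \eqref{bcSystem} (its displayed identity \eqref{k/(k+n-2)} is precisely the "manifestly positive expression" you anticipated), and then locates the $k+1$ roots by sign changes at the Gauss--Gegenbauer nodes via interlacing and symmetry. The single computational step you deferred does indeed collapse to a positive rational function of $n$ and $k$, so there is no gap.
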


\begin{proof}
The symmetry of the roots is an immediate consequence of the explicit form of $q_{k+1}$. It is easily shown that the system \eqref{bcSystem} has two real solutions. Without loss of generality assume $b>0>c$. Using the three-term recurrence formula for the Gegenbauer polynomials (with the normalization $P_i^{(n)}(1)=1$)
\begin{equation} \label{3term}
(2\alpha+k)P_{k+1}^{(n)}(t)=2(\alpha+k)tP_k^{(n)}(t)-kP_{k-1}^{(n)}(t),
\end{equation}
we shall write
\[q_{k+1}(t)= P_{k+1}^{(n)}(t)+bP_{k-1}^{(n)}(t)= \frac{2(\alpha+k)}{2\alpha+k}tP_k^{(n)}(t)-\left( \frac{k}{2\alpha+k} -b\right) P_{k-1}^{(n)}(t).\]

We claim that the constant $c_1:=k/(2\alpha+k)-b$ is positive. Indeed, consider the quadratic polynomial $g_2(t):=(t-b)(t-c)$. Using  \eqref{bcSystem}, we compute directly that
\begin{equation} \label{k/(k+n-2)}g_2\left( \frac{k}{2\alpha+k} \right)=\frac{2k^2 \alpha(k + 2)(\alpha + k)^2}{(2\alpha + k)^2(2\alpha + k - 1)(\alpha + 2k + 1)(\alpha + k + 1)}>0.\end{equation}
As $g_2(t)\leq 0 $ on $[c,b]$, $c<0<b$, and $k/(2\alpha+k)>0$, we derive the claim.

We will obtain now the existence of the $k+1$ roots of $q_{k+1}$ by careful accounting of the interlacing property of the zeros $\{\alpha_i\}$ of $P_k^{(n)}$ and $\{\eta_j\}$ of $P_{k-1}^{(n)}$. We know that $-1<\alpha_1<\eta_1<\alpha_2<\dots<\eta_{k-1}<\alpha_k$. Clearly, as $q_{k+1}(1)=1+b>0$ and $q_{k+1}(\alpha_k)=-c_1P_{k-1}^{(n)}(\alpha_k)<0$, we will have one zero in the interval $(\alpha_k,1)$. From $q_{k+1}(\alpha_{k-1})=-c_1P_{k-1}^{(n)}(\alpha_{k-1})>0$ we account for another root of $q_{k+1}$ in the interval $(\alpha_{k-1},\alpha_k)$. We complete the proof of the lemma analogously utilizing the symmetry of the Gegenbauer polynomials.
\end{proof}

\begin{lemma}\label{PositiveWeights} The zeros of $q_{k+1}(t)$ defined in \eqref{NextLevelPoly} induce a quadrature rule on $\mathcal{P}_{T^k}$ with positive weights.
\end{lemma}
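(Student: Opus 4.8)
The plan is to use as nodes the $k+1$ zeros $\beta_1<\dots<\beta_{k+1}$ of $q_{k+1}$ supplied by Lemma~\ref{bcRoots}, to define the weights through the degree-$k$ Lagrange basis polynomials $\ell_i$ at these nodes by $\gamma_i:=\int_{-1}^1\ell_i\,d\mu_n$, and to verify separately the exactness on $\mathcal{P}_{T^k}$ and the positivity of the $\gamma_i$. Write $L(f):=\sum_{i=1}^{k+1}\gamma_i f(\beta_i)$ for the resulting functional; by construction $L$ agrees with integration against $\mu_n$ on $\mathcal{P}_k$, and $L(1)=1$. For exactness on $\mathcal{P}_{T^k}$ I would divide an arbitrary $f\in\mathcal{P}_{T^k}$ as $f=q_{k+1}s+r$ with $\deg r\le k$ and $\deg s\le k+1$; since $q_{k+1}(\beta_i)=0$ and $L$ is exact on $r$, the quadrature error equals $\int_{-1}^1 q_{k+1}s\,d\mu_n=\langle q_{k+1},s\rangle$, which through $q_{k+1}=P_{k+1}^{(n)}+bP_{k-1}^{(n)}$ depends only on the coefficients of $P_{k+1}^{(n)}$ and $P_{k-1}^{(n)}$ in $s$. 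That this vanishes for the quotients arising from $P_{2k+1}^{(n)}$ and $P_{2k+2}^{(n)}$ is precisely what the relations \eqref{System}--\eqref{bcSystem} (equivalently \eqref{Qe}--\eqref{Oe}) were set up to force; combined with $q_{k+1}\perp\mathcal{P}_{k-2}$ this gives exactness on a spanning set, and a dimension count ($\dim\mathcal{P}_{T^k}=2k+2$) closes the argument.

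The substance of the lemma is positivity, where the classical Gauss argument fails: $\ell_i^2$ has degree $2k$, but $\mathcal{P}_{T^k}$ omits $P_{2k}^{(n)}$, so $L(\ell_i^2)\ne\int\ell_i^2\,d\mu_n$ in general. I would instead record a defect formula. With $w_i(t):=\prod_{j\ne i}(t-\beta_j)$, so that $\ell_i=w_i/w_i(\beta_i)$, expand $\ell_i^2=p_i+c^{(i)}P_{2k}^{(n)}$ where $p_i\in\mathcal{P}_{2k-1}\subset\mathcal{P}_{T^k}$ and the top coefficient $c^{(i)}=1/\bigl(w_i(\beta_i)^2a_{2k}\bigr)>0$. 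Since $L(\ell_i^2)=\gamma_i$, $L$ is exact on $p_i$, and $\int P_{2k}^{(n)}\,d\mu_n=0$, this yields $\gamma_i=\int_{-1}^1\ell_i^2\,d\mu_n+c^{(i)}L\bigl(P_{2k}^{(n)}\bigr)$; because $c^{(i)}>0$, positivity hinges on controlling $L\bigl(P_{2k}^{(n)}\bigr)$.

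I would compute $L(P_{2k}^{(n)})$ with the test polynomial $q_{k+1}P_{k-1}^{(n)}\in\mathcal{P}_{2k}$. It is annihilated by $L$ (each $q_{k+1}(\beta_i)=0$), its $P_{2k}^{(n)}$-component equals the top linearization coefficient $\kappa$ of $P_{k+1}^{(n)}P_{k-1}^{(n)}$ (positive by the nonnegative linearization of \cite{G}, with $\kappa a_{2k}=a_{k+1}a_{k-1}$), and its remaining part lies in $\mathcal{P}_{2k-2}$; comparing $L$-values gives $0=\kappa\,L(P_{2k}^{(n)})+\langle q_{k+1},P_{k-1}^{(n)}\rangle$, whence $L(P_{2k}^{(n)})=-b\|P_{k-1}^{(n)}\|^2/\kappa<0$. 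Retaining only the leading Gegenbauer term of the monic polynomial $w_i$ gives $\int w_i^2\,d\mu_n\ge\|P_k^{(n)}\|^2/a_k^2$, so after multiplying the defect formula by $w_i(\beta_i)^2$ the positivity of every $\gamma_i$ reduces to the single, $i$-independent inequality $\|P_k^{(n)}\|^2/a_k^2>b\|P_{k-1}^{(n)}\|^2/(a_{k+1}a_{k-1})$. The explicit norm and leading-coefficient ratios collapse its right-hand side and turn it into $b<k/(2\alpha+k)$, which is exactly the positivity of the constant $c_1=k/(2\alpha+k)-b$ already established in the proof of Lemma~\ref{bcRoots} (see \eqref{k/(k+n-2)}); hence $\gamma_i>0$.

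The main obstacle is this last step: the rule is forced to be inexact on the one missing polynomial $P_{2k}^{(n)}$, so one must quantify exactly how much, evaluate $L(P_{2k}^{(n)})$ in closed form, and recognize that the threshold separating positive from non-positive weights coincides with the quantity $c_1>0$ coming from the root-location analysis. All the supporting identities --- the coefficient $\kappa$, the norm ratios, and the reduction to $k/(2\alpha+k)$ --- are routine Gegenbauer computations once this structure is in place.
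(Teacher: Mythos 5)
Your proof is correct, and its core --- the positivity argument --- takes a genuinely different route from the paper's. The paper splits on the parity of $k$: for odd $k$ it pairs the symmetric nodes $\beta_i=-\beta_{k+2-i}$ and applies the rule to the degree-$(2k-2)$ polynomials $\prod_{j\ne i,\,k+2-i}(t-\beta_j)^2\in\mathcal{P}_{2k-1}\subset\mathcal{P}_{T^k}$, which immediately gives $\gamma_i>0$; for even $k=2m$ this misses the middle weight $\gamma_{m+1}$ (node $\beta_{m+1}=0$), which the paper recovers by evaluating a specially constructed degree-$(2k-2)$ polynomial under both the new rule and the Gauss--Gegenbauer rule and invoking the interlacing $\alpha_{2m}<\beta_{2m+1}$ from Lemma~\ref{bcRoots}. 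You instead establish the defect identity $\gamma_i=\int_{-1}^1\ell_i^2\,d\mu_n+c^{(i)}L\bigl(P_{2k}^{(n)}\bigr)$ with $c^{(i)}>0$, compute the defect in closed form, $L\bigl(P_{2k}^{(n)}\bigr)=-b\|P_{k-1}^{(n)}\|^2/\kappa<0$, via the test polynomial $q_{k+1}P_{k-1}^{(n)}$, and reduce positivity of every weight to the single inequality $b<k/(2\alpha+k)$, which is exactly the positivity of the constant $c_1$ proved inside Lemma~\ref{bcRoots} (see \eqref{k/(k+n-2)}). I checked the supporting identities: $\kappa a_{2k}=a_{k+1}a_{k-1}$, the bound $\int_{-1}^1 w_i^2\,d\mu_n\ge\|P_k^{(n)}\|^2/a_k^2$, and the collapse $a_{k+1}a_{k-1}\|P_k^{(n)}\|^2/\bigl(a_k^2\|P_{k-1}^{(n)}\|^2\bigr)=k/(n+k-2)$ all follow from the norm and leading-coefficient formulas quoted in the paper, yielding $\gamma_i\ge \|P_{k-1}^{(n)}\|^2\,c_1/\bigl(w_i(\beta_i)^2a_{k+1}a_{k-1}\bigr)>0$. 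Comparing the two: the paper's pairing trick is shorter where it applies but requires the node symmetry, a parity case split, and the interlacing comparison for the exceptional middle node; your argument is uniform in $i$ and in $k$, uses no symmetry at all, quantifies precisely how inexact the rule is on the one omitted polynomial $P_{2k}^{(n)}$, and produces an explicit positive lower bound on each weight. On exactness, both you and the paper lean on the relations \eqref{Qe}--\eqref{Oe} and \eqref{System} from the preceding subsection; your division-plus-dimension-count sketch is at the same (admittedly implicit) level of detail as the paper's own treatment, so there is no gap there.
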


\begin{proof} In Lemma \ref{bcRoots} we established that the polynomial $q_{k+1}$ defined in \eqref{q_k+1} has $k+1$ distinct roots $\{\beta_i\}\subset (-1,1)$, which we use as nodes of the quadrature \eqref{S1A2Q}. Since the Lagrange basic polynomials $L_i (t)$, $i=1,\dots,k+1$, are of degree $k$, they are contained in $\mathcal{P}_{T^k}$, so the weights $\gamma_i:=\int_{-1}^1 L_i(t)\, d \mu_n (t)$, $i=1,\dots,k+1$ are uniquely determined. Moreover, they are symmetric because of the symmetry of the nodes. 
What is left is to prove for the existence of the quadrature in Definition \ref{def-PULB-space} (i) is the positivity of the weights $\gamma_i$. 

Let us consider first the case when $k$ is odd. Then the nodes and the weights are evenly paired as $\beta_i=-\beta_{k+2-i}$ and $\gamma_i=\gamma_{k+2-i}$, $i=1,\dots,(k+1)/2$. The polynomial 
\[u_i (t):=\frac{(t-\beta_1)^2\dots(t-\beta_{k+1})^2}{(t-\beta_i)^2(t-\beta_{k+2-i})^2} = \frac{(t-\beta_1)^2\dots(t-\beta_{k+1})^2}{(t^2-\beta_i^2)^2}\] is of degree $2k-2$ and belongs to $\mathcal{P}_{T^k}$. Applying the quadrature rule we obtain
\[ 0<\int_{-1}^1 u_i(t)\, d\mu_n(t)=\gamma_i u_i(\beta_i)+\gamma_{k+2-i}u_i(\beta_{k+2-i})=2\gamma_i u_i(\beta_i),\]
which implies $\gamma_i>0$.

When $k:=2m$ is even, the same argument yields the positivity of all weights but $\gamma_{m+1}$ which is
corresponding to the node $\beta_{m+1}=0$. Recall that the zeros $\{\alpha_i\}_{i=1}^k$ of $P_k^{(n)}$ define the 
Gauss-Gegenbauer quadrature with positive weights $\{\rho_i \}_{i=1}^k$
\[ \int_{-1}^1 f(t)\, d\mu_n(t)=\sum_{i=1}^k \rho_if(\alpha_i)\]
that is exact for all polynomials $f$ of degree at most $2k-1$. This is the quadrature utilized to derive the first level PULB. The proof of Lemma
 \ref{bcRoots}  established that $\{ \alpha_i \}$ and $\{ \beta_i\}$ interlace, i.e. 
\[\beta_1<\alpha_1<\beta_2<\dots<\alpha_m<0=\beta_{m+1}<\alpha_{m+1}<\dots<\beta_{2m}<\alpha_{2m}<\beta_{2m+1}.\] 
We also have that the nodes $\{\alpha_i\}$ and the weights $\{ \rho_i \}$ are symmetric with respect to the origin, namely $\alpha_i=-\alpha_{k+1-i}$ and $\rho_i=\rho_{k+1-i}$, $i=1,\dots,m$.  We now select the degree $2k-2$ polynomial
\[ f(t):=\frac{(t-\beta_1)\dots(t-\beta_m)(t-\beta_{m+2})\dots (t-\beta_{2m+1})(t-\alpha_2)\dots(t-\alpha_{2m-1})}{(-\beta_1)\dots (-\beta_m)(-\beta_{m+2})\dots(-\beta_{2m+1})(-\alpha_2)\dots (-\alpha_{2m-1})} \in \mathcal{P}_{T^k}.\]
Utilizing the symmetry we simplify 
\[  f(t):=-\frac{(t^2-\beta_{m+2}^2)\dots (t^2-\beta_{2m+1}^2)(t^2-\alpha_{m+1}^2)\dots(t^2-\alpha_{2m-1}^2)}{\beta_{m+2}^2\dots\beta_{2m+1}^2\alpha_{m+1}^2\dots \alpha_{2m-1}^2}.\]
Applying the two quadrature rules with respect to the measure $\mu_n$ to $f$ we obtain
\begin{align*}
\gamma_{m+1}&=\int_{-1}^1 f(t)\, d\mu_n(t)=2\rho_{2m}f(\alpha_{2m})\\
&=-2\rho_{2m}\frac{(\alpha_{2m}^2-\beta_{m+2}^2)\dots (\alpha_{2m}^2-\beta_{2m+1}^2)(\alpha_{2m}^2-\alpha_{m+1}^2)\dots(\alpha_{2m}^2-\alpha_{2m-1}^2)}{\beta_{m+2}^2\dots\beta_{2m+1}^2\alpha_{m+1}^2\dots \alpha_{2m-1}^2}>0,
\end{align*}
because all terms in the numerator are positive except for $\alpha_{2m}^2-\beta_{2m+1}^2<0$. This completes the proof of the lemma.
\end{proof}

The last step in the preparation of the main result in this subsection is a positive definiteness result needed for the interpolation 
property from  Definition \ref{def-PULB-space} (ii).

\begin{lemma}\label{PosDef} The polynomial $g_k(t):=(t-\beta_1)(t-\beta_2)\dots(t-\beta_k)$ has positive Gegenbauer coefficients.
\end{lemma}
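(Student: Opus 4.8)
The plan is to expand $g_k$ in the Gegenbauer basis, $g_k=\sum_{j=0}^k (g_k)_j P_j^{(n)}$, and to read off the sign of each coefficient $(g_k)_j=\langle g_k,P_j^{(n)}\rangle/\|P_j^{(n)}\|^2$ by exploiting the quadrature rule \eqref{S1A2Q} on $\mathcal{P}_{T^k}$ together with the fact that $\beta_1,\dots,\beta_k$ are roots of $g_k$ while the largest node $\beta_{k+1}$ is not. The top coefficient is immediate: since $g_k$ is monic of degree $k$ and $P_k^{(n)}(t)=a_kt^k+\cdots$ with $a_k>0$, we have $(g_k)_k=1/a_k>0$.

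For $0\le j\le k-1$ the product $g_kP_j^{(n)}$ has degree at most $2k-1$, so it lies in $\mathcal{P}_{2k-1}\subset \mathcal{P}_{T^k}$ and the quadrature \eqref{S1A2Q} is exact for it. Since $g_k(\beta_i)=0$ for $i=1,\dots,k$, only the node $\beta_{k+1}$ survives, yielding
\[
\langle g_k,P_j^{(n)}\rangle=\int_{-1}^1 g_k(t)P_j^{(n)}(t)\,d\mu_n(t)=\gamma_{k+1}\,g_k(\beta_{k+1})\,P_j^{(n)}(\beta_{k+1}),\qquad 0\le j\le k-1.
\]
It then suffices to verify that each of the three factors is positive. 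The weight $\gamma_{k+1}>0$ by Lemma \ref{PositiveWeights}, and $g_k(\beta_{k+1})=\prod_{i=1}^{k}(\beta_{k+1}-\beta_i)>0$ because $\beta_{k+1}$ is the largest of the roots.

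The only substantive point, and the step I expect to require the most care, is the sign of $P_j^{(n)}(\beta_{k+1})$. Here I would reuse the interlacing information established in the proof of Lemma \ref{bcRoots}, which locates the largest root of $q_{k+1}$ in $(\alpha_k,1)$, where $\alpha_k$ is the largest zero of $P_k^{(n)}$; thus $\beta_{k+1}>\alpha_k$. Combining this with the classical monotonicity of the largest zero of $P_j^{(n)}$ in the degree $j$, the largest zero of $P_j^{(n)}$ for $j\le k-1$ is at most $\eta_{k-1}<\alpha_k<\beta_{k+1}$, with $\eta_{k-1}$ the largest zero of $P_{k-1}^{(n)}$. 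Since $P_j^{(n)}$ has all its zeros in $(-1,1)$ and $P_j^{(n)}(1)=1>0$, it is strictly positive to the right of its largest zero; as $\beta_{k+1}\in(\alpha_k,1)$ lies there, we get $P_j^{(n)}(\beta_{k+1})>0$. Hence $(g_k)_j>0$ for every $j=0,\dots,k$, which establishes the lemma.
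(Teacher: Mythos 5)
Your proof is correct, and it takes a genuinely different route from the paper's. The paper follows the Cohn--Kumar trick (their Proposition 3.2): it integrates $q_{k+1}(t)\frac{P_i^{(n)}(t)-P_i^{(n)}(\beta_{k+1})}{t-\beta_{k+1}}$ against $d\mu_n$, using only orthogonality of $q_{k+1}=P_{k+1}^{(n)}+bP_{k-1}^{(n)}$ to low-degree polynomials, to obtain the identity $c_i\|P_i^{(n)}\|^2=P_i^{(n)}(\beta_{k+1})\,c_0$ for $i\le k-1$; this shows all lower coefficients share the sign of $c_0$, and the sign is then anchored by an explicit computation of $c_{k-1}=\beta_{k+1}(2\alpha)_{k-1}/2^{k-1}(\alpha)_{k-1}>0$ (using that the $\beta_i$ sum to zero, so the $t^{k-1}$ coefficient of $g_k$ equals $\beta_{k+1}>0$). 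You instead apply the quadrature rule \eqref{S1A2Q} to $g_kP_j^{(n)}\in\mathcal{P}_{2k-1}\subset\mathcal{P}_{T^k}$, where all nodes except $\beta_{k+1}$ are killed by $g_k$, and anchor positivity at the weight $\gamma_{k+1}>0$ from Lemma \ref{PositiveWeights}; since $c_0=\gamma_{k+1}g_k(\beta_{k+1})$, your factorization is equivalent to the paper's identity but is obtained by quadrature exactness rather than by the division trick. Both arguments hinge on the same geometric facts ($\beta_{k+1}>\alpha_k$ from Lemma \ref{bcRoots}, and positivity of $P_j^{(n)}$ to the right of its largest zero). What each buys: your version is shorter and cleaner given that Lemma \ref{PositiveWeights} precedes this lemma in the paper (so there is no circularity---its proof does not use the present lemma, and you only need the single weight $\gamma_{k+1}$); the paper's version is logically lighter in that it never invokes weight positivity or exactness of the quadrature beyond plain orthogonality, and in fact it independently re-derives $c_0>0$, hence $\gamma_{k+1}>0$, as a by-product.
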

\begin{proof} 
Denote the Gegenbauer expansion
\begin{equation} \label{GegPosDef}
g_k(t)=\frac{(2\alpha)_{k+1}}{2^{k+1}(\alpha)_{k+1}}\frac{q_{k+1}(t)}{t-\beta_{k+1}} = \sum_{i=0}^k c_i P_i^{(n)}(t).
\end{equation}
We first note that from \eqref{GegExpansion}
\[ q_{k+1}(t)=P_{k+1}^{(n)}(t)+bP_{k-1}^{(n)}(t) =\frac{(\alpha)_{k+1}2^{k+1}}{(2\alpha)_{k+1}}t^{k+1}+\left( \frac{(b\alpha)_{k-1}2^{k-1}}{(2\alpha)_{k-1}} -\frac{((k+1)k\alpha)_{k}2^{k-1}}{(2\alpha)_{k+1}}\right) t^{k-1}+\dots,\]
which implies that $\beta_1+\beta_2+\dots+\beta_{k+1}=0$. As
\begin{align*} g_k(t)&= t^k-(\beta_1+\dots+\beta_k)t^{k-1}+\dots=t^k+\beta_{k+1}t^{k-1}+\dots\\
&=\frac{(2\alpha)_k}{2^k(\alpha)_k}P_k^{(n)}(t)+\beta_{k+1}\frac{(2\alpha)_{k-1}}{2^{k-1}(\alpha)_{k-1}}P_{k-1}^{(n)}(t)+\dots ,\end{align*}
we have that $c_k=(2\alpha)_k/2^k(\alpha)_k>0$ and $c_{k-1}=\beta_{k+1}(2\alpha)_{k-1}/2^{k-1}(\alpha)_{k-1}>0$ (recall $\beta_{k+1}\in (\alpha_k,1)$, where $\alpha_k$ is the largest root of the Gegenbauer polynomials $P_k^{(n)}$).

The remaining of the proof follows the approach from \cite[Proposition 3.2]{CK}. Let $i\leq k-1$. The orthogonality implies that
\[ \int_{-1}^1 \left( P_{k+1}^{(n)}(t)+bP_{k-1}^{(n)}(t) \right)  \frac{P_i^{(n)}(t)-P_i^{(n)}(\beta_{k+1})}{t-\beta_{k+1}}\, d\mu_n(t)=0.\]
Thus,
\[ \int_{-1}^1 \frac{P_{k+1}^{(n)}(t)+bP_{k-1}^{(n)}(t)}{t-\beta_{k+1}}   P_i^{(n)}(t) \, d\mu_n(t)=  P_i^{(n)}(\beta_{k+1}) \int_{-1}^1 \frac{P_{k+1}^{(n)}(t)+bP_{k-1}^{(n)}(t)}{t-\beta_{k+1}} \, d\mu_n(t),\]
or, equivalently
\[  \int_{-1}^1 g_k(t)   P_i^{(n)}(t) \, d\mu_n(t)=  P_i^{(n)}(\beta_{k+1}) \int_{-1}^1 g_k(t) \, d\mu_n(t).\]
Utilizing the expansion \eqref{GegPosDef} we derive that 
\[ c_i\|P_i^{(n)}\|^2=P_i^{(n)}(\beta_{k+1})c_0, \quad i=0,1,\dots,k-1.\]
Since all of the roots of $P_i^{(n)}(t)$, $i=0,1,\dots,k-1$, are smaller than $\alpha_k<\beta_{k+1}$, we derive that $P_i^{(n)}(\beta_{k+1})>0$, and therefore all of the coefficients $c_0,c_1,\dots,c_{k-1}$ have the same sign. As $c_{k-1}>0$, the Lemma follows.
\end{proof}

\begin{theorem}\label{P_T PULB-space} The subspace $\mathcal{P}_{T^k}$ defined in \eqref{Skip1Add2} is a PULB-space. 
\end{theorem}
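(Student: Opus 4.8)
The plan is to verify the two defining properties of a PULB-space in Definition~\ref{def-PULB-space} for $\mathcal{P}_{T^k}$. Property (i) is already in hand: Lemma~\ref{bcRoots} gives that the node polynomial $q_{k+1}$ from \eqref{NextLevelPoly} has $k+1$ simple zeros $\beta_1<\dots<\beta_{k+1}$ in $(-1,1)\subset[-1,1]$, symmetric about the origin, and Lemma~\ref{PositiveWeights} shows these nodes carry a quadrature rule exact on $\mathcal{P}_{T^k}$ with positive weights. Thus the entire content of the theorem is property (ii): the existence, for every absolutely monotone $h$, of a lower admissible $f\in\mathcal{L}(n,T^k,h)$ that agrees with $h$ at the $\beta_i$.

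For (ii) I would start from the Hermite interpolant $p$ of degree at most $2k+1$ defined by $p(\beta_i)=h(\beta_i)$ and $p'(\beta_i)=h'(\beta_i)$, $i=1,\dots,k+1$. The standard remainder formula
\[ h(t)-p(t)=\frac{h^{(2k+2)}(\xi_t)}{(2k+2)!}\prod_{i=1}^{k+1}(t-\beta_i)^2 \]
together with $h^{(2k+2)}\ge 0$ gives $p\le h$ on $[-1,1]$, with equality at the nodes. The difficulty is that $p\in\mathcal{P}_{2k+1}$ generally carries a nonzero $P_{2k}^{(n)}$-component and hence need not lie in $\mathcal{P}_{T^k}=\mathcal{P}_{2k-1}\oplus\mathrm{span}(P_{2k+1}^{(n)},P_{2k+2}^{(n)})$.

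To repair membership I would subtract a multiple of the square of the node polynomial, setting $f:=p-\lambda\,q_{k+1}^2$ with $\lambda:=p_{2k}/(q_{k+1}^2)_{2k}$, the quotient of the two $P_{2k}^{(n)}$-coefficients. Since $q_{k+1}^2$ is an even polynomial of degree $2k+2$, this subtraction touches only the coefficients of degrees $0,2,\dots,2k+2$ and, by the choice of $\lambda$, annihilates the $P_{2k}^{(n)}$-coefficient, so $f\in\mathcal{P}_{T^k}$; moreover $f(\beta_i)=h(\beta_i)$ because $q_{k+1}(\beta_i)=0$. The denominator is positive: the $P_{2k}^{(n)}$-component of $q_{k+1}^2=(P_{k+1}^{(n)}+bP_{k-1}^{(n)})^2$ arises only from the product $P_{k+1}^{(n)}P_{k-1}^{(n)}$, whose linearization coefficient of $P_{2k}^{(n)}$ is strictly positive by \cite{G}; this is the same nonnegative-linearization input that powers Lemma~\ref{PosDef}.

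The hard part, and the place where I expect the real work, is showing $f\le h$, i.e.\ $\lambda\ge 0$: once $\lambda\ge 0$ one has $f=p-\lambda q_{k+1}^2\le p\le h$ from $q_{k+1}^2\ge 0$. As $(q_{k+1}^2)_{2k}>0$, this is equivalent to $p_{2k}\ge 0$. By parity only $P_{2k}^{(n)}$ among $P_0^{(n)},\dots,P_{2k+1}^{(n)}$ contributes to the monomial $t^{2k}$, so $p_{2k}=d_{2k}/a_{2k}$ with $a_{2k}>0$ the leading coefficient of $P_{2k}^{(n)}$ and $d_{2k}$ the $t^{2k}$-coefficient of $p$. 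Expanding $p$ in Newton form on the doubled node list and invoking the symmetry $\sum_{i=1}^{k+1}\beta_i=0$ from Lemma~\ref{bcRoots}, which collapses the relevant elementary symmetric sum to $+\beta_{k+1}$, I would obtain
\[ d_{2k}=h[\beta_1,\beta_1,\dots,\beta_k,\beta_k,\beta_{k+1}]+\beta_{k+1}\,h[\beta_1,\beta_1,\dots,\beta_{k+1},\beta_{k+1}]. \]
Each divided difference equals a value $h^{(\cdot)}(\xi)/(\cdot)!\ge 0$ by absolute monotonicity, and $\beta_{k+1}>0$, so $d_{2k}\ge 0$ and $\lambda\ge 0$. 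This produces $f\in\mathcal{L}(n,T^k,h)$ matching $h$ at the quadrature nodes, which is property (ii) and finishes the proof. I expect the bookkeeping in this last step—cleanly isolating the $P_{2k}^{(n)}$-coefficient and pinning its sign through the node symmetry and the positive-definiteness of Lemma~\ref{PosDef}—to be the main obstacle.
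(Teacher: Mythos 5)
Your proof is correct, and its skeleton coincides with the paper's: both start from the Hermite interpolant at the doubled nodes $\{\beta_i\}$ and subtract a multiple of the squared node polynomial --- the paper's $G_{2k+2}(t;h,\mathcal{P}_{T^k})=H_{2k+1}(t;h)-(h_{2k}/e_{2k})\,g_{k+1}^2(t)$ is exactly your $f=p-\lambda q_{k+1}^2$ (the different normalization of the node polynomial is absorbed in the multiplier) --- so that membership in $\mathcal{P}_{T^k}$, interpolation at the nodes, and $f\le h$ all hinge on the single sign $\lambda\ge 0$. Where you genuinely depart from the paper is in how that sign is obtained. The paper expands the interpolant in Newton form, identifies the top two partial products as $g_k^2$ and $g_kg_{k+1}$, and invokes Lemma \ref{PosDef} (positivity of the Gegenbauer coefficients of $g_k$) together with the nonnegative linearization theorem of \cite{G} to conclude that the $2k$-th Gegenbauer coefficients of those products are positive, whence $h_{2k}\ge 0$. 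You instead observe that, by parity, $P_{2k}^{(n)}$ is the only Gegenbauer polynomial of degree at most $2k+1$ containing the monomial $t^{2k}$, so $p_{2k}$ is a positive multiple of the $t^{2k}$-coefficient of the interpolant; reading that coefficient off Newton's form and using the node symmetry $\sum_i\beta_i=0$ collapses it to $h[\beta_1,\beta_1,\dots,\beta_k,\beta_k,\beta_{k+1}]+\beta_{k+1}\,h[\beta_1,\beta_1,\dots,\beta_{k+1},\beta_{k+1}]\ge 0$, since divided differences of an absolutely monotone $h$ are nonnegative and $\beta_{k+1}>0$. This is more elementary and buys a real simplification: Lemma \ref{PosDef} is bypassed entirely for this step, while the paper's route keeps the positive-definiteness machinery that is reused in the Cohn--Kumar style arguments; Gasper's theorem is still needed on your side for the denominator $(q_{k+1}^2)_{2k}>0$, just as the paper needs $e_{2k}>0$. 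One small inaccuracy: the $P_{2k}^{(n)}$-component of $q_{k+1}^2$ does not arise only from the cross term $2bP_{k+1}^{(n)}P_{k-1}^{(n)}$; the square $\bigl(P_{k+1}^{(n)}\bigr)^2$ also contributes a nonnegative (in fact positive) $P_{2k}^{(n)}$-component by the same linearization theorem. Since all contributions are nonnegative and the cross term's is strictly positive (using $b>0$), your conclusion $(q_{k+1}^2)_{2k}>0$ stands, but the phrase ``arises only from'' should be corrected.
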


\begin{proof} 
The existence of a quadrature with positive weights required in Definition \ref{def-PULB-space} (i) has already been established in Lemma \ref{PositiveWeights}. We shall derive the needed for Definition \ref{def-PULB-space} (ii) interpolating polynomial next.

Let $h$ be any absolutely monotone potential. Denote with $H_{2k+1}(t;h)$ the Hermite interpolant in the space $\mathcal{P}_{2k+1}$ at the multi-set $\{t_1,t_2,\dots,t_{2k+2}\}=\{ \beta_1,\beta_1,\beta_2,\dots,\beta_{k+1},\beta_{k+1}\}$. Let $u_j(t):=(t-t_1)\dots(t-t_j)$, $j=1,\dots,2k+2$, denote the partial products associated with the multi-set. Using the Newton's formula we write
\begin{align*} 
H_{2k+1}(t;h)&=\sum_{j=0}^{2k+1}h[t_1,\dots,t_{j+1}]u_j(t)\\
&=\sum_{j=0}^{2k-1}h[t_1,\dots,t_{j+1}]u_j(t)+h[t_1,\dots,t_{2k+1}]g_k^2(t)+h[t_1,\dots,t_{2k+2}]g_k(t)g_{k+1}(t),\end{align*}
where $g_{k+1}(t)=(t-\beta_1)\dots(t-\beta_{k+1})=(2\alpha)_{k+1}/2^{k+1}(\alpha)_{k+1}\left( P_{k+1}^{(n)}(t)+bP_{k-1}^{(n)}(t)\right)$ clearly has positive Gegenbauer coefficients. From Lemma \ref{PosDef}, $g_k$ also has positive Gegenbauer coefficients, so from \cite{G} we have that $g_k^2$, $g_kg_{k+1}$, and $g_{k+1}^2$ all have positive Gegenbauer coefficients and in particular their $2k$-th coefficients will be positive. But this implies the $2k$-th Gegenbauer coefficient $h_{2k}$ of the interpolant $H_{2k+1}(t;h)$ is also positive. Let $e_{2k}$ denote the $2k$-th Gegenbauer coefficient of $g_{k+1}^2$ and consider the polynomial
\[ G_{2k+2}(t;h,\mathcal{P}_{T^{k}}):=H_{2k+1}(t;h)- \frac{h_{2k}}{e_{2k}}g_{k+1}^2(t).\]
It clearly interpolates $H_{2k+1}(t;h)$ and hence $h(t)$ and its derivative at the nodes $\{\beta_i\}$. It also is orthogonal to $P_{2k}^{(n)}$ and hence belongs to $\mathcal{P}_{T^k}$. Finally, from the absolute monotonicity of $h$ the regular interpolant $H_{2k+1}(t;h)\leq h(t)$ on $[-1,1]$ (follows easily by an application of Rolle's theorem), so we have
\begin{equation} \label{Interpolant_P_T}
G_{2k+2}(t;h,\mathcal{P}_{T^{k}})\leq H_{2k+1}(t;h) \leq h(t),
\end{equation}
verifying Definition \ref{def-PULB-space} (ii). The theorem is proved.
\end{proof}

\subsection{Second level PULB for $T^k$-designs}

Next, equipped with Theorem \ref{P_T PULB-space}, we proceed with the PULB for $T^k$-designs which we call {\it second level PULB}. 
\begin{theorem} \label{level-2-T}
Suppose $C$ is a spherical $T^k$-design of cardinality $N$ on $\mathbb{S}^{n-1}$, 
the potential $h$ is absolutely monotone on $[-1,1]$. Then 
\begin{equation} \label{level-2-bound}
\mathcal{Q}_h(C) \geq N \sum_{i=1}^{k+1} \gamma_i h(\beta_i).
\end{equation}
Equality in \eqref{level-2-bound} holds if and only if there exists a point $\widetilde{x} \in \mathbb{S}^{n-1}$ such that 
$I(\widetilde{x}, C) =\{\beta_i\}_{i=1}^{k+1}$ with integer frequencies $\{N\gamma_i\}_{i=1}^{k+1}$. \textcolor{black}{The location of such a point is independent of the potential $h$ and hence will be referred to as a universal minimum.}
\end{theorem}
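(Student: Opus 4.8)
The plan is to obtain both the inequality and the equality characterization as direct consequences of the PULB-space framework already developed. By Theorem \ref{P_T PULB-space}, $\mathcal{P}_{T^k}$ is a PULB-space, and its quadrature rule --- unique by the corollary following Theorem \ref{facetThm} --- has nodes $\{\beta_i\}_{i=1}^{k+1}$ and positive weights $\{\gamma_i\}_{i=1}^{k+1}$ furnished by Lemma \ref{PositiveWeights}. For the bound \eqref{level-2-bound}, I would first invoke Proposition \ref{prop32} (equivalently Corollary \ref{cor4-5}): since $C$ is a spherical $T^k$-design, every lower admissible $f \in \mathcal{L}(n,T^k,h)$ satisfies $U_h(x,C) \geq U_f(x,C) = f_0 N$ for all $x \in \mathbb{S}^{n-1}$, so that $\mathcal{Q}_h(C) \geq N\max_{f \in \mathcal{L}(n,T^k,h)} f_0$. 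Because $h$ is absolutely monotone and $\mathcal{P}_{T^k}$ is a PULB-space, Theorem \ref{PULBQR} evaluates this maximum exactly as $\sum_{i=1}^{k+1}\gamma_i h(\beta_i)$, and chaining the two yields \eqref{level-2-bound}.

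For the equivalence I would argue both implications. The reverse direction is a computation: if some $\widetilde{x}$ has $I(\widetilde{x},C) = \{\beta_i\}_{i=1}^{k+1}$ with frequencies $\{N\gamma_i\}_{i=1}^{k+1}$, then grouping the terms of $U_h(\widetilde{x},C)$ by inner-product value (as in \eqref{QRcode}) gives $U_h(\widetilde{x},C) = N\sum_{i=1}^{k+1}\gamma_i h(\beta_i)$; since $\mathcal{Q}_h(C) \leq U_h(\widetilde{x},C)$, this forces equality in \eqref{level-2-bound} in view of the lower bound just established. For the forward direction, equality in \eqref{level-2-bound} means precisely that $C$ attains the PULB, so Theorem \ref{facetThm} applies (its standing hypothesis of a unique quadrature rule being exactly the uniqueness guaranteed above). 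As $U_h(\cdot,C)$ is lower semicontinuous on the compact sphere, the infimum $\mathcal{Q}_h(C)$ is attained at some minimizer $y$, and Theorem \ref{facetThm} characterizes every minimizer by $I(y,C) = \{\beta_i\}$ with frequencies $\{N\gamma_i\}$; taking $\widetilde{x} = y$ completes this direction.

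Finally, the universality claim follows from the structural observation that the characterizing condition on $\widetilde{x}$, namely $I(\widetilde{x},C) = \{\beta_i\}$ with frequencies $\{N\gamma_i\}$, makes no reference to $h$ whatsoever: the nodes $\beta_i$ and weights $\gamma_i$ depend only on $n$ and $k$. Consequently, by the reverse implication above, any $\widetilde{x}$ satisfying this condition is a minimum point of $U_h(\cdot,C)$ simultaneously for every absolutely monotone $h$, so the location of the minimum is independent of $h$; Theorem \ref{facetThm}(i) records the equivalent purely geometric description as a furthest point from $C$. I expect the only delicate step to be the forward direction's appeal to Theorem \ref{facetThm}, which hinges on uniqueness of the quadrature rule; since that uniqueness is supplied by the corollary following Theorem \ref{facetThm} for PULB-spaces, no genuine obstacle remains, and the theorem follows as a clean corollary of the preceding framework.
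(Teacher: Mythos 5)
Your proposal is correct and takes essentially the same route as the paper: the paper likewise derives \eqref{level-2-bound} from Corollary \ref{cor4-5} together with the interpolant $G_{2k+2}(t;h,\mathcal{P}_{T^k})$ from Theorem \ref{P_T PULB-space} (which is precisely what your appeal to Theorem \ref{PULBQR} packages), and it settles universality exactly as you do, by noting that the nodes $\beta_i$ and weights $\gamma_i$ do not depend on $h$. The only difference is that you spell out the equality characterization in full --- attainment of the infimum plus the minimizer characterization of Theorem \ref{facetThm}, whose uniqueness hypothesis you correctly discharge --- whereas the paper's proof leaves these details implicit; this is a faithful, slightly more complete rendering of the same argument.
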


\begin{remark} In the examples of Section 5, the absolute monotonicity requirement for $h$ may be relaxed to $h^{(2k)}\geq 0$, $h^{(2k+1)}\geq 0$, and $h^{(2k+2)}\geq 0$.
\end{remark}

\begin{proof} We already established that the roots of $q_{k+1}$ define a quadrature rule that is exact on  $\mathcal{P}_{T^k}$. The interpolant $G_{2k+2}(t;h,\mathcal{P}_{T^k})$ defined in \eqref{Interpolant_P_T} belongs to $\mathcal{L}(n,T^k,h)$ and agrees with $h$ at the quadrature nodes $ \{\beta_i\}_{i=1}^{k+1}$. Now the
second level PULB \eqref{level-2-bound} follows from Corollary \ref{cor4-5}. Should a point $\widetilde{x}$ exist, for which equality in \eqref{level-2-bound} holds, the independence of its location from the potential $h$ follows from that of the nodes $\beta_i$ and weights $\gamma_i$. \end{proof}

In the next section we will verify directly that for the icosahedron, dodecahedron, and the roots of $E_8$ and the 
Leech lattices, the corresponding polynomial subspaces $\mathcal{P}_{T^k}$ are PULB-spaces, thus providing 
concrete applications of the general result of Theorem \ref{P_T PULB-space}. We will also determine the universal 
minima $\widetilde{x}$ for which equality holds in \eqref{level-2-bound}. The concept of a PULB space and its 
application to a third-level PULB is found in Theorem \ref{600}, which considers the universal minima for the 
$600$-cell for potentials satisfying the conditions in case (ii) of Theorem \ref{PULB}. The considerably more 
difficult case (i) requires even higher level PULB's and will be the focus of a future work, along with the infinite 
family of sharp codes in the last row of Table 2.

\section{Next level PULB -- icosahedron, dodecahedron, $E_8$, and Leech lattice}  \label{sec-level2}

In this section we illustrate Theorem \ref{level-2-T} and the next level PULB \eqref{level-2-bound}. Three of the Platonic solids -- tetrahedron, cube, and  octahedron -- attain the first level PULB \eqref{PolarizationULB} (see \cite{BDHSS_P} for details), while the icosahedron and the dodecahedron attain the second level PULB \eqref{level-2-bound}. Similarly, with the exception of the infinite family (to be considered in a future work), all sharp codes in Table 2 attain \eqref{PolarizationULB}, with the exception of the icosahedron, and the kissing configurations of $E_8$ and the Leech lattice, which attain the second level PULB \eqref{level-2-bound}. In a recent independent work \cite{Bor-new} Borodachov analyzed via different methods the icosahedron, dodecahedron and the kissing configuration of  $E_8$.

\begin{theorem} \label{sharp-pos-level-2}
The sharp codes from Table 3 attain the bound \eqref{level-2-bound} with inner products and distance distributions  as given in that table 
and points $\widetilde{x}$ as described below. 
\end{theorem}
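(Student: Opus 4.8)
The plan is to verify, for each configuration listed in Table~3, the two hypotheses of Theorem~\ref{level-2-T}: that the configuration is a spherical $T^k$-design for the relevant $k$, and that there exists a point $\widetilde{x}$ whose inner-product profile $I(\widetilde{x},C)$ reproduces the quadrature data. Since $\mathcal{P}_{T^k}$ is a PULB-space by Theorem~\ref{P_T PULB-space}, the inequality \eqref{level-2-bound} is automatic; the real content is the existence of an attaining $\widetilde{x}$, and by the equality clause of Theorem~\ref{level-2-T} this simultaneously yields sharpness and the universality of the location of the minimum. First I would record the parameters: for the icosahedron and the dodecahedron $n=3$ and $\tau=5$, hence $k=3$ and $T^3=\{1,2,3,4,5,7,8\}$; for the $E_8$ kissing configuration $(8,240,7)$, $k=4$ and $T^4=\{1,\dots,7,9,10\}$; and for the Leech kissing configuration $(24,196560,11)$, $k=6$ and $T^6=\{1,\dots,11,13,14\}$.

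The first task is the $T^k$-design property. Each configuration is centrally symmetric, so by Lemma~\ref{BHSLemma} every odd moment vanishes; in particular $M_{2k+1}^n(C)=0$. Each is a classical spherical $(2k-1)$-design, giving $M_i^n(C)=0$ for $1\le i\le 2k-1$, while $M_{2k}^n(C)\ne0$ because the strength is exactly $2k-1$. Thus the single nontrivial requirement is $M_{2k+2}^n(C)=0$, which I would establish from the invariant theory of the symmetry group through Lemma~\ref{BHSLemma}(b): for a single group orbit, $\sum_{x\in C}Y(x)$ equals a multiple of the invariant part of the harmonic $Y$, so $M_{2k+2}^n(C)=0$ as soon as the group admits no invariant harmonic of degree $2k+2$. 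For the icosahedral group the invariant harmonics occur in degrees $0,6,10,\dots$, so $M_6\ne0$ but $M_7=M_8=0$; for $W(E_8)$ they occur in degrees $0,8,12,14,\dots$, giving $M_8\ne0$ and $M_9=M_{10}=0$; and for $\mathrm{Co}_0$ they occur in degrees $0,12,16,\dots$, giving $M_{12}\ne0$ and $M_{13}=M_{14}=0$. Each of these moment identities may instead be confirmed by a direct computation from the explicit coordinates.

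Next I would assemble the quadrature data. Solving \eqref{bcSystem} for the positive root $b$ and forming $q_{k+1}$ as in \eqref{q_k+1} and \eqref{NextLevelPoly} produces the $k+1$ symmetric nodes $\beta_1<\dots<\beta_{k+1}$, with weights $\gamma_i=\int_{-1}^1 L_i\,d\mu_n$ computed as in Lemma~\ref{PositiveWeights}; a short computation shows these coincide with the inner products and frequencies $N\gamma_i$ tabulated in Table~3 (for $n=3$, $k=3$ one finds $\beta^2=(5\pm2\sqrt5)/15$ with equal weights $\gamma_i=1/4$). It then remains to exhibit the attaining point. For the icosahedron I would take $\widetilde{x}$ to be a normalized face-centroid, i.e.\ a vertex direction of the dual dodecahedron, and check directly that the $12$ vertices split into four triples realizing $\{\beta_i\}$; the dodecahedron is handled dually with $\widetilde{x}$ an icosahedron vertex direction. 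For the $E_8$ and Leech configurations I would proceed as in Section~\ref{pos-derivative}, choosing $\widetilde{x}$ adapted to the lattice coordinates and partitioning the code into the shells $C_{\widetilde{x},\beta_i}$, reading off the frequencies either by direct enumeration or, more efficiently, from the uniqueness of the quadrature rule.

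I expect the construction and verification of $\widetilde{x}$ for the Leech kissing configuration to be the main obstacle: the shell-size bookkeeping for the $196560$ minimal vectors is delicate, and one must exploit the $24+77+176$ McLaughlin-type substructure already used in Subsection~\ref{4600} to keep the enumeration tractable. Once the profile $I(\widetilde{x},C)=\{\beta_i\}_{i=1}^{k+1}$ with integer frequencies $\{N\gamma_i\}$ is matched in each case, the equality statement of Theorem~\ref{level-2-T} finishes the proof.
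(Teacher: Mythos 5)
You take essentially the same route as the paper: identify the index set $T^k$, compute the Skip 1-Add 2 quadrature, invoke Theorems \ref{P_T PULB-space} and \ref{level-2-T}, and exhibit a point $\widetilde{x}$ whose inner-product profile matches the table, concluding attainment from the equality clause (equivalently, from the uniqueness of the quadrature weights). The paper's attaining points are exactly your candidates: normalized face centroids for the icosahedron and dodecahedron, and for $E_8$ and Leech the explicit choices $\widetilde{x}=[1,0,\dots,0]\in\mathbb{S}^7$ and $\widetilde{x}=[5/\sqrt{48},1/\sqrt{48},\dots,1/\sqrt{48}]\in\mathbb{S}^{23}$, whose profiles the paper checks by direct type-by-type enumeration of the minimal vectors followed by the quadrature-uniqueness argument; no McLaughlin-type substructure is needed for this step, contrary to your expectation (that structure is used only for the $(23,4600,7)$ code in Section \ref{pos-derivative}). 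Two genuine differences are worth recording. First, you justify the moment identities $M_{2k+1}^n(C)=M_{2k+2}^n(C)=0$ by orbit-averaging and the absence of invariant harmonics in those degrees; this is valid, since each of the four codes is a single orbit of its symmetry group, and it is a clean supplement to the paper, which merely asserts these identities. Second, you bypass the explicit Newton-form interpolants $G_{2k+2}(t;h,\mathcal{P}_{T^k})$ by citing Theorem \ref{level-2-T} directly; this proves attainment of \eqref{level-2-bound} for absolutely monotone $h$, which is all the stated theorem formally requires, but it does not deliver the relaxed hypotheses $h^{(2k)}\ge 0$, $h^{(2k+1)}\ge 0$, $h^{(2k+2)}\ge 0$ recorded in the header of Table 3 and in the remark following Theorem \ref{level-2-T}; the paper's case-by-case construction of $G_{2k+2}$, with the explicit ratios $h_{2k}/e_{2k}$, is precisely what justifies that relaxation.
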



\subsection{Icosahedron} We remind the reader that the icosahedron, denoted here as $I_{12}$, is a sharp code with inner products $-1, \pm 1/\sqrt{5}$, has $12$ vertices, $20$ equilateral triangles as faces, and $30$ equal edges. It is also true that the icosahedron solves the best covering problem for $12$ points on $\mathbb{S}^2$.  We shall show that the minima of the discrete potential are attained at the centers of the spherical caps determined by the equilateral triangle faces. 

Let $\{x_1,x_2,x_3\}$ form one such side. Orient it horizontally, so that the center of the circumscribed spherical cap $\widetilde{x}$ is located at the North pole. We have that $\widetilde{x}=(x_1+x_2+x_3)/\|x_1+x_2+x_3\|$, from which we can find that
\[\widetilde{x}\cdot x_i =\frac{\|x_1+x_2+x_3\|}{3}=\frac{\sqrt{3+2x_1\cdot x_2+2x_1 \cdot x_3+2x_2 \cdot x_3}}{3}=\frac{\sqrt{1+2/\sqrt{5}}}{\sqrt{3}}, \quad i=1,2,3.\]
If we order the rest of the points of the icosahedron into three horizontal planes we find that $I(\widetilde{x},I_{12})=\{b_1,b_2,b_3,b_4\}$, where
\begin{equation}\label{Icos_b}
b_1=-\frac{\sqrt{1+2/\sqrt{5}}}{\sqrt{3}},\quad b_2= -\frac{\sqrt{1-2/\sqrt{5}}}{\sqrt{3}}, \quad b_3=\frac{\sqrt{1-2/\sqrt{5}}}{\sqrt{3}}, \quad b_4=\frac{\sqrt{1+2/\sqrt{5}}}{\sqrt{3}}.
\end{equation}

Let us compare this result to what our skip 1-add 2 method produces. In this case ($k=3, n=3$) the system \eqref{System} gives
$a=d=0$ and $b=5/9$, $c=-1$. Recalling \eqref{q_k+1}, we see that the polynomial $q_4(t)$ is 
\[ q_4(t)=P_{4}^{(3)}(t)+bP_{2}^{(3)}(t)=\frac{35}{8}t^4-\frac{35}{12}t^2+\frac{7}{72} \]
and its zeros are found to be exactly $b_1,b_2, b_3, b_4$.

Since $M_i(I_{12})=0$ for all odd $i$ and for $i=2,4$, and $8$, we set $T^3:=\{1,2,3,4,5,7,8\}$ and consider (see \eqref{Skip1Add2})
the subspace
\[ \mathcal{P}_{T^3}:=\mathcal{P}_{5} \oplus  \mbox{span} \left(P_{7}^{(3)}, P_{8}^{(3)}\right)=\mathcal{P}_8\cap \{P_6^{(3)}\}^\perp. \]
Theorem \ref{P_T PULB-space} gives that $\mathcal{P}_{T^3}$ is a PULB-space and hence the second level bound \eqref{level-2-bound}
from Theorem \ref{level-2-T} holds. The optimal polynomial $G_{8}(t;h,\mathcal{P}_{T^{3}})$, is found as follows. If $u_j(t)=(t-t_1)\dots(t-t_j)$, $j=0,\dots,8$ are the partial products associated with the multi-set $\{t_1,t_2,\dots,t_8\}=\{b_1,b_1,\dots,b_4\}$, then
\[G_{8}(t;h,\mathcal{P}_{T^{3}}) =\sum_{i=0}^7 h[t_1,\dots,t_{j+1}]u_j(t)- \frac{h_{6}}{e_{6}}u_8(t),\]
where 
\[ h_{6}=\frac{16}{231}h[t_1,\dots,t_7] +\frac{16\sqrt{75 + 30\sqrt{5}}}{3465} h[t_1,\dots,t_8] ,\quad e_6 = \frac{128}{3465}.\] 
If $h^{(6)}(t)\geq 0$, $h^{(7)}(t)\geq 0$, and $h^{(8)}(t)\geq 0$ on $[-1,1]$, then $G_{8}(t;h,\mathcal{P}_{T^{3}}) \leq h(t)$ on $[-1,1]$ and $G_{8}(t;h,\mathcal{P}_{T^{3}})\in \mathcal{L}(3,T^3,h)$. The quadrature rule 
\begin{equation} \label{quad-icosa-2}
\frac{1}{2}\int_{-1}^1 f(t)dt=\frac{1}{4}f(b_1)+\frac{1}{4}f(b_2)+\frac{1}{4}f(b_3)+\frac{1}{4}f(b_4)
\end{equation}
holds for all polynomials in $\mathcal{P}_{T^3}$, in particular for $G_{8}(t;h,\mathcal{P}_{T^{3}})$. Thus,
\[ 12G_0=3\sum_{i=1}^4 G_{8}(b_i;h,\mathcal{P}_{T^{3}})= 3\sum_{i=1}^4 h(b_i)=U_h(\widetilde{x},I_{12}),\]
because $G_{8}(b_i;h,\mathcal{P}_{T^{3}})=h(b_i)$. This implies that $U_h(x,I_{12})$ attains its global minimum at $\widetilde{x}$ (as well as at all other centers of circumscribed spherical caps associated with the equilateral triangles). In other words, the icosahedron attains the second level bound 
\eqref{level-2-bound} from Theorem \ref{level-2-T}.

We have the lower bound
\begin{equation} \label{IcosLower}
\mathcal{Q}_{h}(3,12)\geq \mathcal{Q}_h(I_{12})= 3\sum_{i=1}^4 h(b_i),
\end{equation}
where $\{ b_i \}$ are given in \eqref{Icos_b}. Moreover, as a by-product we obtain that $b_4$, the largest node in the quadrature 
\eqref{quad-icosa-2}, is a lower bound for the covering radius of all spherical 5-designs on $\mathbb{S}^2$, an improvement of the Fazekas-Levenshtein bound, 
that is attained by the icosahedron.

\subsection{Dodecahedron} We can utilize the above framework to find a second level PULB as well as 
a similar generalization of the Fazekas-Levenshtein bound, for the case of spherical $5$-designs of $20$ points on $\mathbb{S}^2$. 
While not a sharp code, the dodecahedron $D_{20}$ is one of the Platonic solids and of independent interest. We include it also because it highlights an important phenomenon, namely that the annihilating polynomial $q_{k+1}(t)$ does not depend on the cardinality $N$, but only on $k$ and $n$. 
As such, it is the same as for the icosahedron. These properties follow from the duality of the icosahedron and dodecahedron. 

Recall that the dodecahedron is an antipodal spherical 5-design, has $20$ vertices, $12$ regular pentagons as its faces and $30$ equal edges, and,
moreover, its eighth moment is zero. Let $\widetilde{x}$ be the center of the spherical cap circumscribing one of the regular pentagon faces. Orienting $\widetilde{x}$ as the North pole and the associated pentagon to be horizontal, we observe that the vertices are partitioned into four parallels. The inner products in $I(\widetilde{x},D_{20})$ are the same as in the case of the icosahedron, namely the numbers $b_1,b_2,b_3,b_4$ from \eqref{Icos_b},
but now with multiplicities $5$ each. Observe, that the polynomial $q_{4}(t)$ in \eqref{q_k+1}  depends on the design strength $\tau$ and dimension $n$, but not on cardinality of the code, so it is the same in this case. Noticing that $I(\widetilde{x},D_{20})$ gives rise to the same partial products as in the icosahedron case, we deduce the second level PULB
\[\mathcal{Q}_{h}(3,20)\geq \mathcal{Q}_h(D_{20})= 5\sum_{i=1}^4 h(b_i). \]


\begin{center}
\begin{table}
\scalebox{0.7}{
\begin{tabular}{|c|c|c|c|c|c|c|}
\hline
dim & Cardinality & Strength & Second level Polarization bound \\ 
$n$ & $N$ & $\tau$ & $h^{(\tau+1)} \geq 0,h^{(\tau+2)} \geq 0,h^{(\tau+3)} \geq 0$ \\
\noalign{\smallskip}\hline\noalign{\smallskip}
3 & 12 & 5 & $3h\left(-\frac{\sqrt{1+2/\sqrt{5}}}{\sqrt{3}}\right) + 3h\left(-\frac{\sqrt{1-2/\sqrt{5}}}{\sqrt{3}}\right) + 
3h\left(\frac{\sqrt{1-2/\sqrt{5}}}{\sqrt{3}}\right) + 3h\left(\frac{\sqrt{1+2/\sqrt{5}}}{\sqrt{3}}\right)$ \\
\noalign{\smallskip}\hline\noalign{\smallskip}
3 & 20 & 5 & $5h\left(-\frac{\sqrt{1+2/\sqrt{5}}}{\sqrt{3}}\right) + 5h\left(-\frac{\sqrt{1-2/\sqrt{5}}}{\sqrt{3}}\right) + 
5h\left(\frac{\sqrt{1-2/\sqrt{5}}}{\sqrt{3}}\right) + 5h\left(\frac{\sqrt{1+2/\sqrt{5}}}{\sqrt{3}}\right)$ \\ 
\noalign{\smallskip}\hline\noalign{\smallskip}
8 & 240 & 7 & $14h\left(-\frac{\sqrt{2}}{2}\right) + 64h\left(-\frac{\sqrt{2}}{4}\right) + 84h(0) + 64h\left(\frac{\sqrt{2}}{4}\right) + 
14h\left(\frac{\sqrt{2}}{2}\right)$ \\
\noalign{\smallskip}\hline\noalign{\smallskip}
24 & 196560 & 11 & $552h\left(-\frac{\sqrt{6}}{4}\right) +11178h\left(-\frac{\sqrt{6}}{6}\right)+48600 h\left(-\frac{\sqrt{6}}{12}\right)+75900h(0)+48600 h\left(\frac{\sqrt{6}}{12}\right)+11178h\left(\frac{\sqrt{6}}{6}\right)+552h\left(\frac{\sqrt{6}}{4}\right)$ \\
   &      &      &  \\
\hline
\end{tabular}
}
\bigskip
\caption{Second-level PULB quadrature for the icosahedron, dodecahedron, $E_8$, and Leech lattice.}
\end{table}
\end{center}

%

\subsection{The sharp code $(8,240,7)$ and the $E_8$ lattice} The last two codes we will consider, the kissing configurations of 
the $E_8$ lattice and the Leech lattice, have received significant attention lately with the discovery that these lattices solve the hard spheres best packing problems in the relevant dimensions (see \cite{V}, \cite{CKMRV}). The points of contact of a fixed sphere with the other neighboring spheres are $240$ 
(see Figure 3) in $\mathbb{R}^8$ and $196560$ in $\mathbb{R}^{24}$ and 
define sharp codes that do not attain the first level PULB \eqref{PolarizationULB}. We shall see that like the icosahedron, 
both attain the second level PULB  \eqref{level-2-bound} as described in Section \ref{T-designs} and shown in Table 3.

\begin{figure}[htbp]
\centering
\includegraphics[width=3 in]{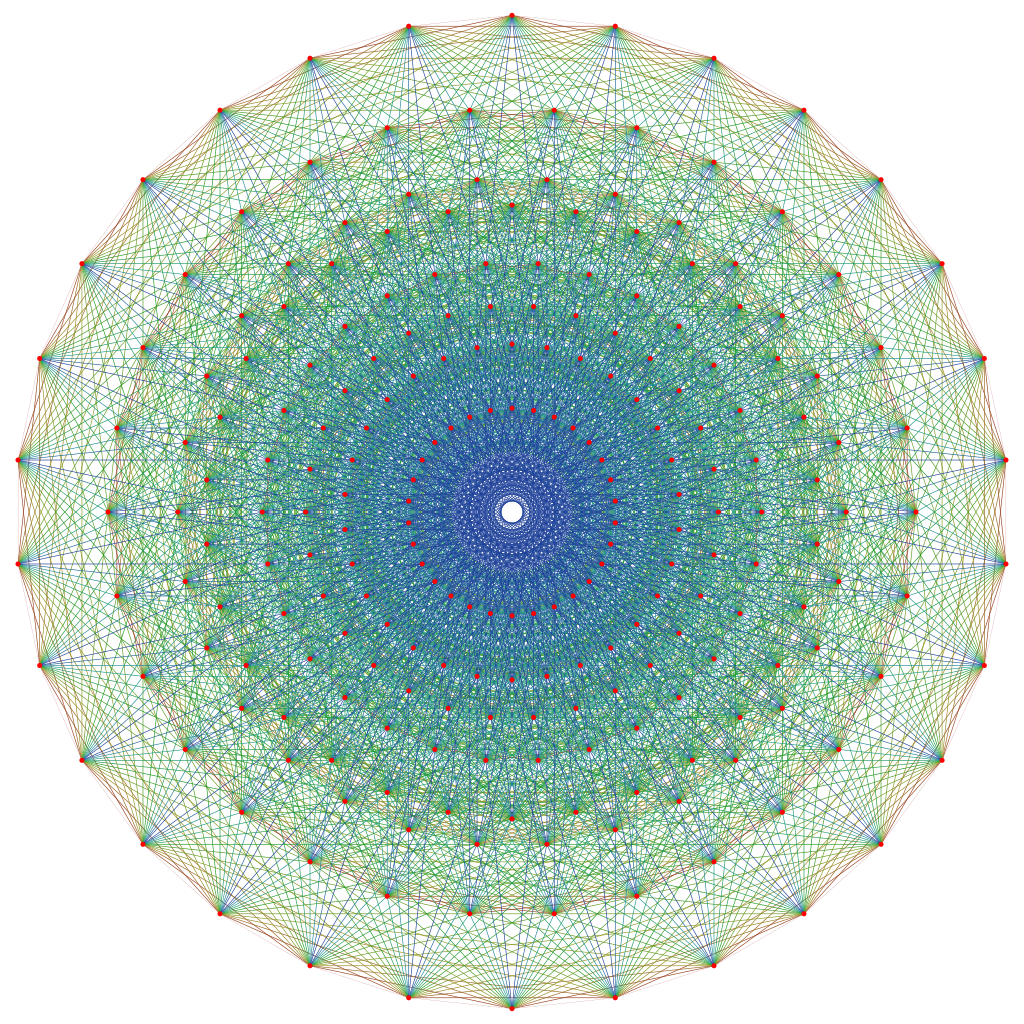}
\caption{The roots of $E_8$ lattice (By Claudio Rocchini - Own work, CC BY 3.0, https://commons.wikimedia.org/w/index.php?curid=4932406)}
\label{fig:3}
\end{figure}

We first exhibit the coordinates of the sharp code $C_{240}:=(8,240,7)$. There are $2^7=128$ vectors $[(\pm 1/\sqrt{8})^8]$ with 
even number of negative signs and $4 {8\choose 2}=112$ points with two coordinates $[(\pm 1/\sqrt{2})^2,0^6]$. 
As seen in Table 2, the quadrature weights for the $(8,240,7)$ sharp code are not integers and the code $C_{240}$ 
does not attain the PULB \eqref{PolarizationULB}. Thus, we proceed with the Skip 1-Add 2 framework as in Section \ref{T-designs}. 
 
In this case $k=4$ and $n=8$, so the system \eqref{System} gives $a=d=0$, $b=1/6$ and $c=-1$. The quadrature annihilating polynomial $q_5 (t)$ is
\[q_5(t)=\frac{t(8t^2-1)(2t^2-1)}{6}\]
with zeros 
\[ b_1= -\frac{\sqrt{2}}{2}, \quad b_2= -\frac{\sqrt{2}}{4}, \quad b_3=0, \quad b_4=\frac{\sqrt{2}}{4}, \quad b_5=\frac{\sqrt{2}}{2}.\]

Let us select $\widetilde{x}=[1,0,\dots,0]\in \mathbb{S}^7$. There are $14$ points with $\widetilde{x}\cdot [1/\sqrt{2},(\pm 1/\sqrt{2})^1,0^6]=1/\sqrt{2}$, $14$ with $\widetilde{x}\cdot [-1/\sqrt{2},(\pm 1/\sqrt{2})^1,0^6]=-1/\sqrt{2}$ and $84$ with $\widetilde{x}\cdot [0,(\pm 1/\sqrt{2})^2,0^5]=1/\sqrt{2}$. Similarly, there are $64$ points with $\widetilde{x}\cdot [1/\sqrt{8},(\pm 1/\sqrt{8})^7]=1/\sqrt{8}$, and $64$ points with $\widetilde{x}\cdot [-1/\sqrt{8},(\pm 1/\sqrt{8})^7]=-1/\sqrt{8}$, both with even number of negative signs. This data is shown in Table 3. 

Since $M_i(C_{240})=0$ for all odd $i$ and for $i=2,4,6$, and $10$, we set $T^4:=\{1,2,3,4,5,6,7,9,10\}$ and consider 
the subspace
\[ \mathcal{P}_{T^4}:=\mathcal{P}_{7} \oplus  \mbox{span} \left(P_{9}^{(8)}, P_{10}^{(8)}\right)=\mathcal{P}_{10} \cap \{P_8^{(8)}\}^\perp . \]
It is a PULB-space by Theorem \ref{P_T PULB-space} and the second level bound \eqref{level-2-bound} from Theorem \ref{level-2-T} holds. The optimal polynomial $G_{10}(t;h,\mathcal{P}_{T^4})$, is found as follows. If $u_j(t)=(t-t_1)\dots(t-t_j)$, $j=0,\dots,10$ are the partial products associated with the multi-set $\{t_1,t_2,\dots,t_{10}\}=\{b_1,b_1,\dots,b_5\}$, then
\[G_{10}(t;h,\mathcal{P}_{T^{4}}) =\sum_{i=0}^9 h[t_1,\dots,t_{j+1}]u_j(t)- \frac{h_{8}}{e_{8}}u_{10}(t),\]
where 
\[ h_{8}=\frac{143}{1280}h[t_1,\dots,t_9] +\frac{143\sqrt{2}}{2560} h[t_1,\dots,t_{10}] ,\quad e_8 = \frac{143}{2048}.\] 
If $h^{(8)}(t)\geq 0$, $h^{(9)}(t)\geq 0$, and $h^{(10)}(t)\geq 0$ on $[-1,1]$, then $G_{10}(t;h,\mathcal{P}_{T^{4}}) \leq h(t)$ on $[-1,1]$ and $G_{10}(t;h,\mathcal{P}_{T^{4}})\in \mathcal{L}(8,T^4,h)$. 
The quadrature rule 
\[240 \int_{-1}^1 f(t)d\mu_8 (t)=14f(b_1)+64f(b_2)+84f(b_3)+64f(b_4)+14f(b_5)\]
holds for all polynomials in $\mathcal{P}_{T^4}$, and hence for $G_{10}(t;h,\mathcal{P}_{T^{4}}) \in \mathcal{L}(8,T^4,h)$. Thus, as in the icosahedron case
\[ 240G_0=U_h(\widetilde{x},C_{240}),\]
because $G_{10}(b_i;h,\mathcal{P}_{T^{4}})=h(b_i)$. This implies that $U_h(x,C_{240})$ 
attains its global minimum at $\widetilde{x}$ and the sharp code $C_{240}$
with the so chosen $\widetilde{x}$ attains the second level bound \eqref{level-2-bound}. The classification of the points of minima will be given 
in Section 6.

\subsection{The sharp code $(24,196560,11)$ and the Leech lattice} The most complex known sharp code is the one arising from the shortest vectors of the Leech lattice. Recall that $C_L=(24,196560,11)$ was introduced in Subsection \ref{4600} and the three types of shortest vectors 
in the Leech lattice to be normalized in order to belong to $\mathbb{S}^{23}$ are $1104$ points of type A 
$[(\pm 1/\sqrt{2})^2,0^{22}]$; $98304$ points of type B $[(\mp 3/\sqrt{32})^1,(\pm 1/\sqrt{32})^{23}]$, where the upper signs follow the $1$'s in the codewords of the extended binary Golay code; and $97152$ of type C  $[(\pm 1/(2\sqrt{2}))^8,0^{16}]$. The PULB quadrature from Table 2 does not have integer weights, so we proceed with the Skip 1-Add 2 process. With $k=6$ and $n=24$, the system \eqref{System} implies 
$a=d=0$, $b=4/81$ and $c=-1$. The quadrature annihilating polynomial $q_7 (t)$ is
\[q_7(t)=\frac{17t(6t^2-1)(24t^2-1)(8t^2-3)}{9315}\]
with zeros 
\[ b_1= -\frac{\sqrt{6}}{4}, \quad b_2= -\frac{\sqrt{6}}{6}, \quad b_3=-\frac{\sqrt{6}}{12}, \quad b_4=0, \quad b_5=\frac{\sqrt{6}}{12}, \quad b_6=\frac{\sqrt{6}}{6} \quad b_7=\frac{\sqrt{6}}{4}.\]
The corresponding quadrature weights can be found from the Lagrange basis corresponding to the node set $\{ b_1,\dots, b_7\}$ (see the last row of 
Table 3; also below). 

We set $T^6:=\{1,2,\ldots,14\} \setminus \{12\}$ and consider the corresponding polynomial subspace  
\[ \mathcal{P}_{T^6}= \mathcal{P}_{14}\cap \{P_{12}^{(24)}\}^\perp. \]
Then $\mathcal{P}_{T^6}$ is a PULB-space by Theorem \ref{P_T PULB-space} and the second level bound \eqref{level-2-bound} from Theorem \ref{level-2-T} holds. The optimal polynomial $G_{14}(t;h,\mathcal{P}_{T^6})$, is found as follows. If $u_j(t)=(t-t_1)\dots(t-t_j)$, $j=0,\dots,14$ are the partial products associated with the multi-set $\{t_1,t_2,\dots,t_{14}\}=\{b_1,b_1,\dots,b_7\}$, then
\[G_{14}(t;h,\mathcal{P}_{T^{6}}) =\sum_{i=0}^{13} h[t_1,\dots,t_{j+1}]u_j(t)- \frac{h_{12}}{e_{12}}u_{14}(t),\]
where 
\[ h_{12}=\frac{310155}{2315264} h[t_1,\dots,t_{13}] +\frac{310155\sqrt{6}}{9261056} h[t_1,\dots,t_{14}] ,\quad e_{12} = \frac{516925}{5292032}.\] 
If $h^{(12)}(t)\geq 0$, $h^{(13)}(t)\geq 0$, and $h^{(14)}(t)\geq 0$ on $[-1,1]$, then $G_{14}(t;h,\mathcal{P}_{T^{6}}) \leq h(t)$ on $[-1,1]$ and $G_{14}(t;h,\mathcal{P}_{T^{6}})\in \mathcal{L}(24,T^6,h)$. For any polynomial $f\in \mathcal{P}_{T^6}$ we have
\begin{equation}\label{LeechQR}
\begin{split}
196560\int_{-1}^1 f(t)\, d\mu_{24}(t)=&552f\left(-\frac{\sqrt{6}}{4}\right) +11178f\left(-\frac{\sqrt{6}}{6}\right)+ 
48600 f\left(-\frac{\sqrt{6}}{12}\right)+75900f(0)\\
&+48600 f\left(\frac{\sqrt{6}}{12}\right)+11178f\left(\frac{\sqrt{6}}{6}\right)+552f\left(\frac{\sqrt{6}}{4}\right)
\end{split}
\end{equation}
(see the last row of Table 3). 

Let us select the point $\widetilde{x}=[5/\sqrt{48},1/\sqrt{48},\dots,1/\sqrt{48}]\in \mathbb{S}^{23}$. We shall prove that 
\[I(\widetilde{x},C_L)=\left\{\pm \frac{\sqrt{6}}{4}, \pm \frac{\sqrt{6}}{6},\pm \frac{\sqrt{6}}{12},0\right\}.\]
It is easy to see that the inner products of $\widetilde{x}$ with points of type A yield values
\[ \left\{\pm \frac{5+1}{4\sqrt{6}},\pm \frac{5-1}{4\sqrt{6}}, \pm \frac{1+1}{4\sqrt{6}},\pm \frac{1-1}{4\sqrt{6}}\right\} = \left\{ \pm \frac{\sqrt{6}}{4}, \pm \frac{\sqrt{6}}{6}, \pm \frac{\sqrt{6}}{12},0 \right\}.\]

Next, determine the set of inner products of $\widetilde{x}$ with points of type B. Points starting with $3/\sqrt{32}$ will have $8$, $12$ or $16$ positive coordinates follow, which yields inner products with $\widetilde{x}$ of $\sqrt{6}/12$, $\sqrt{6}/6$ and $\sqrt{6}/4$, respectively. For points that start with $-3/\sqrt{32}$ the number of positive coordinates changes to $7$, $11$, and $15$ and the inner products are $-\sqrt{6}/4$, $-\sqrt{6}/6$, and $-\sqrt{6}/12$, respectively. If a point of type B starts with $1/\sqrt{32}$ and another coordinate is $3/\sqrt{32}$, then the number of other positive coordinates is $7$, $11$, and $15$ with inner products $0$, $\sqrt{6}/12$, and $\sqrt{6}/6$, respectively. If that other coordinate is $-3/\sqrt{32}$, then the number of positive coordinates changes to $6$, $10$, and $14$ and the inner products are respectively $-\sqrt{6}/12$, $0$, and $\sqrt{6}/12$. The case of a point of type $B$ starting with $-1/\sqrt{32}$ is handled analogously.

Points of type C may start with $1/(2\sqrt{2})$, $-1/(2\sqrt{2})$, or $0$. In the first case, the number of negative coordinates is $0$, $2$, $4$, or $6$ with respective inner products $\sqrt{6}/4$, $\sqrt{6}/6$, $\sqrt{6}/12$, and $0$. In the second case, the number of subsequent negative coordinates may be $1$, $3$, $5$, and $7$ yielding corresponding inner products of $0$, $-\sqrt{6}/12$, $-\sqrt{6}/6$, and $-\sqrt{6}/4$. Finally, if a point of type C has a first coordinate $0$, then there may be $0$, $2$, $4$, $6$, and $8$ subsequent negative coordinates, in which case the inner products are $\sqrt{6}/6$, $\sqrt{6}/12$, $0$, $-\sqrt{6}/12$, and $-\sqrt{6}/6$, respectively.

The code $C_L$ and the point $\widetilde{x}$ define a quadrature rule exact on $\mathcal{P}_{T^6}=\mathcal{P}_{14}\cap \{P_{12}^{(24)}\}^\perp$ because the corresponding moments are zero, see Lemma \ref{BHSLemma}. As the quadrature nodes are the same as the quadrature nodes of the rule listed on Table 3, the weights are determined uniquely from the Lagrange basis. Therefore, the frequencies of the inner products will match the corresponding ones in the quadrature rule on Table 3.

The polynomial $G_{14}(b_i;h,\mathcal{P}_{T^{6}}) \in \mathcal{L}(24,T^6,h)$ provides the second level PULB \eqref{level-2-bound}, 
attained by $C_L$ with the point $\widetilde{x}$ as described above; i.e.,
\[ \begin{split}
{\mathcal{Q}}_{h}(C_L)=&552h\left(-\frac{\sqrt{6}}{4}\right) +11178h\left(-\frac{\sqrt{6}}{6}\right)+ 
48600 h\left(-\frac{\sqrt{6}}{12}\right)+75900h(0)\\
&+48600 h\left(\frac{\sqrt{6}}{12}\right)+11178h\left(\frac{\sqrt{6}}{6}\right)+552h\left(\frac{\sqrt{6}}{4}\right).
\end{split} \]

\section{Classification of the universal minima of sharp codes when $h^{(\tau+1)}(t)\geq 0$}\label{classification}
In this section we shall characterize the universal minima of the sharp codes. The following theorem describes the minima of the discrete potential for codes that attain first and second level PULB. 

\begin{theorem} \label{6_1} Let $C\subset \mathbb{S}^{n-1}$, $|C|=N$, be a sharp code that attains the first or second level PULB, \eqref{PolarizationULB} or \eqref{level-2-bound}, respectively, as shown in Table 2 or 3. Then the global minimum of $U_h( x,C)$ is the corresponding 
polarization bound in Table 2 or Table 3. This minimum is attained at a point $y\in \mathbb{S}^{n-1}$ if and only if the collection of inner products $I(y, C)$, coincides with the set of quadrature nodes $\{\alpha_i \}_{i\in I}$ or $\{\beta_{i}\}_{i=1}^{k+1}$ 
and the frequencies of these inner products are $\{N\rho_i\}_{i\in I}$ or  $\{N\gamma_{i}\}_{i=1}^{k+1}$, respectively. 
Furthermore, $y$ is a center of the spherical cap determined by a hyperplane that is $\alpha_k$ or $\beta_{k+1}$, respectively, 
distance from the origin and is determined by the $N\rho_k$- or $N\gamma_{k+1}$-point facet of $C$ that is closest to $y$.  For all sharp codes and Platonic solids in Table 2 and 3 these facets are uniquely determined up to isometry and hence all minima are obtained from $\widetilde{x}$ by an isometric transformation.  
\end{theorem}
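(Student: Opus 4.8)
The plan is to obtain the value of the minimum, the characterization of minimizers, and the geometric description as direct consequences of Theorem~\ref{facetThm}, and then to reduce the final uniqueness-up-to-isometry assertion to a case-by-case inspection built on the explicit models of Sections~\ref{pos-derivative} and~\ref{sec-level2}. First I would check that the hypotheses of Theorem~\ref{facetThm} hold. For a first-level code the relevant space is $\mathcal{P}_\tau$, which is a PULB-space by the remark following Definition~\ref{def-PULB-space}; for a second-level code it is $\mathcal{P}_{T^k}$, a PULB-space by Theorem~\ref{P_T PULB-space}. In each case the quadrature rule is unique (by the corollary following Theorem~\ref{facetThm}), its nodes are $\{\alpha_i\}_{i\in I}$ (resp.\ $\{\beta_i\}_{i=1}^{k+1}$), and by hypothesis $C$ attains \eqref{PolarizationULB} (resp.\ \eqref{level-2-bound}). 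Theorem~\ref{facetThm} then gives at once that the global minimum of $U_h(\cdot,C)$ equals the tabulated bound, and that $y$ is a minimizer if and only if $I(y,C)$ coincides with the node set with frequencies $\{N\rho_i\}$ (resp.\ $\{N\gamma_i\}$). Because the nodes and weights depend only on $n$ and $T$ and not on $h$, this characterization is the same for all admissible $h$, which is the stated universality.

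Next I would read off the geometric description from parts (i)--(iii) of Theorem~\ref{facetThm}. Writing $\alpha_k$ (resp.\ $\beta_{k+1}$) for the largest node, part (iii) shows that the set $C_{y,\alpha_k}$ of code points realizing the maximal inner product with $y$ spans a facet of $\mathrm{conv}(C)$---necessarily the facet closest to $y$---with $N\rho_k$ (resp.\ $N\gamma_{k+1}$) vertices, all lying in the hyperplane $\{z:z\cdot y=\alpha_k\}$ at distance $\alpha_k$ from the origin. Part (ii) places the centroid of this facet at $\alpha_k y$, so $y$ is the outward unit normal through that centroid, i.e.\ the center of the spherical cap cut off by the facet. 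This is precisely the description of $y$ claimed in the theorem.

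The main obstacle is the last clause: that for each listed code the minimizing facets are all congruent, so that every minimizer is an isometric image of the single explicit point $\widetilde{x}$ produced in Sections~\ref{pos-derivative} and~\ref{sec-level2}. I would argue this code by code. By parts (ii)--(iii) each minimizing facet is a rigid spherical configuration of $N\rho_k$ points on the sphere of radius $\sqrt{1-\alpha_k^2}$ centered at $\alpha_k y$, and in every example it is a recognizable sub-configuration (a simplex, a cross-polytope, or a kissing/sharp subcode) on which the automorphism group of $C$ acts; the content of the claim is that this group is transitive on the facets of this type. For the Platonic solids this is classical face-transitivity, while for the sharp codes it follows from the strongly-regular-graph and Golay-code descriptions used in their construction, which exhibit the minimizing facets (equivalently, the relevant graph splits) as a single orbit. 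Transitivity, combined with the explicit minimizer $\widetilde{x}$ already exhibited in each case, shows that the minima form one orbit under the symmetry group of $C$, completing the proof.
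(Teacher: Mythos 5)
Your first two paragraphs follow essentially the same route as the paper. The paper derives the value of the minimum and the ``inner products plus frequencies'' characterization from the attainment conditions in Theorems \ref{PULB} and \ref{level-2-T}, and the hyperplane/facet/spherical-cap description of a minimizer from Theorem \ref{facetThm} (whose parts (ii)--(iii) you quote); routing everything through Theorem \ref{facetThm}, after checking the PULB-space hypotheses via Theorem \ref{P_T PULB-space} and the uniqueness corollary, is exactly the machinery the paper uses, just invoked in one step instead of two. That portion of your proposal is correct and faithful.

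The divergence, and the genuine gap, is in the final uniqueness-up-to-isometry clause. The paper never argues transitivity of the automorphism group of $C$ on the minimizing facets. Instead it proves the literal statement that ``the facets are uniquely determined up to isometry'' configuration by configuration: each $N\rho_k$- or $N\gamma_{k+1}$-point facet is identified as a lower-dimensional simplex, a cross-polytope, a unique strongly regular graph (Gewirtz, Brouwer--Haemers, Hoffman--Singleton), or a sharp sub-code, where \cite[Theorem 8.2]{DGS} supplies the design/inner-product structure forcing these identifications and the uniqueness results for sharp codes and for strongly regular graphs in \cite{BrSRG} finish the argument. Your substitute claim --- that the srg and Golay-code constructions ``exhibit the minimizing facets as a single orbit'' of $\mathrm{Aut}(C)$ --- is not something those constructions actually provide: Sections \ref{pos-derivative} and \ref{sec-level2} produce particular minimizers $\widetilde{x}$, not orbit information, and establishing transitivity (e.g., of the Higman--Sims graph's automorphism group on its Hoffman--Singleton splits, or of the symmetry group of the Leech kissing configuration on its $552$-point facets) requires group-theoretic facts that appear nowhere in the paper and are not trivial. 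The paper's route also confronts a subtlety your sketch glosses over: for $(21,162,3)$ the $81$-point facet had to be distinguished from two other strongly regular graphs on $81$ vertices by a direct distance computation, since matching parameters alone do not pin down the configuration. So the transitivity approach could in principle be completed with known facts about these groups, but as written the key assertion is unsupported, whereas the paper's configuration-uniqueness argument closes the case list without any appeal to group actions.
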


\begin{proof} This theorem is a direct consequence of the conditions in Theorems \ref{PULB} and \ref{level-2-T} for attaining the bounds \eqref{PolarizationULB} and \eqref{level-2-bound}, respectively. Indeed, if $C$ attains the  corresponding PULB, 
then the existence of a point $\widetilde{x}$ follows. Moreover, any other point $y$ where the global minimum is obtained will have to 
satisfy the same properties of that $\widetilde{x}$ does. In particular, $N\rho_k$ {or $N\gamma_{k+1}$ points, respectively} 
from the code will have to lie in a hyperplane $\{ z\in \mathbb{R}^n : y\cdot z=\alpha_k\}$  {or
$\{ z\in \mathbb{R}^n : y\cdot z=\beta_{k+1}\}$, respectively}. We enumerate below all sharp codes discussed above and the unique facets 
determining the minima, which completes the proof.
\end{proof}

We now describe the $N\rho_k$ or  $N\gamma_{k+1}$-point facets for the sharp codes in Table 2 and Table 3 explicitly.
\begin{itemize}
\item[1.] The both cases of the regular $N$-gon on $\mathbb{S}^1$, the quadrature determines two points of the $N$-gon circle that are closest to $y$ and hence $y$ is a midpoint of the arc between two consecutive  points from the $N$-gon.
\item[2.] Since the codes are degenerate in this case, we simply embed them in a Equatorial hyperplane and the North and South poles are the minima (for all possible embeddings).
\item[3.] For the regular simplex case $N=n+1$, the $n$-point facet is a simplex of one dimension lower and hence unique (all sharp codes are unique). 
\item[4.] For the cross-polytope code, the $n$-point facet in questions is also a simplex of one dimension lower and hence unique.
\item[5.] For the icosahedron we have to consider the second level PULB, where the facet is an equilateral triangle (see Table 3).
\item[6.] The $(5,16,3)$ sharp code has an $8$-point facet in a $4$-dimensional hyperplane at a distance   $1/\sqrt{5}$ from the origin. By \cite[Theorem 8.2]{DGS} we conclude the $8$-point facet forms a cross-polytope.
\item[7.] The $(6,27,4)$ sharp code has minima at points that are antipodal to points from the code (the facet is a $10$-point cross-polytope on a $5$-dimensional hyperplane at a distance $1/2$ from the origin).
\item[8.] The $(7,56,5)$ sharp code has a $12$-point facet on a $6$-dimensional hyperplane at a distance $1/\sqrt{3}$ from the origin. That it is a cross-polytope follows from \cite[Theorem 8.2]{DGS} again.
\item[9.] For the $E_8$ lattice code $(8,240,7)$ we need to consider the second-level PULB, which reveals that the corresponding $14$-point facet is a cross-polytope in a $7$-dimensional hyperplane at a distance $1/\sqrt{8}$ from the origin. 
\textcolor{black}{The description of these points of minima is given in \cite{Bor-new}; they are the vertices of the Gosset's regular polytope $4_{21}$ \cite{Gos}.}
\item[10.] The $(21,112,3)$ code has a $56$-point facet in a $20$-dimensional hyperplane at a distance $1/\sqrt{21}$ from the origin. While this is not a sharp code, it forms the Gewirtz graph, a unique strongly regular graph srg$(56,10,0,2)$ (see \cite[Subsection 10.20]{BrSRG}). 
\item[11.] The $(21,162,3)$ code has a $81$-point facet in a $20$-dimensional hyperplane, also at a distance $1/\sqrt{21}$ from the origin. This too is not a sharp code, but it forms the Brouwer-Haemers  graph \cite[Subsection 10.28]{BrSRG}, a unique strongly regular graph srg$(81,20,1,6)$. As there are two more strongly regular graphs with $81$ points,  the $VNO^-_4 (3)$ graph, a unique srg$(81,30,9,12)$ graph \cite[Subsection 10.29]{BrSRG}, and two rank $3$ conference graphs srg$(81,40,19,20)$ \cite[Subsection 10.30]{BrSRG}, we verified the distances within the $81$-point facet to conclude that the configuration is indeed Brouwer-Haemers graph.
\item[12.]  The Higman-Sims sharp code $(22,100,3)$ has a $50$-point facet that forms a Hoffman-Singleton graph, a unique strongly regular graph srg$(50,7,0,1)$ (see  \cite[Subsection 10.19]{BrSRG}). 
\item[13.] Similarly to the $(6,27,4)$ sharp code, the McLaughlin sharp code $(22,275,4)$ has minima at points that are antipodal to points from the code. The facet is a $112$-point sharp code (follows from \cite[Theorem 8.2]{DGS}).
\item[14.] The $(22,891,5)$ code has a $162$-point facet in a $21$-dimensional hyperplane at a distance $1/4$ from the origin. Utilizing \cite[Theorem 8.2]{DGS} we have that this facet forms a sharp code and hence is uniquely determined.
\item[15.]  The $(23,552,5)$ code has a $100$-point facet, forming a Higman-Sims sharp code embedded in a hyperplane that is $\sqrt{3}/5$ distance away from the origin.
\item[16.]  The $(23,4600,7)$ code has a $275$-point facet, forming a McLaughlin sharp code embedded in a hyperplane that is $\sqrt{5}/5$ distance away from the origin.
\item[17.] Finally, we conclude with the Leech lattice. The second level reveals that the facet closest to a minima is comprised of $552$-point code, which according to \cite[Theorem 8.2]{DGS} is a sharp code and hence is uniquely determined.
\end{itemize}

\section{Max-min polarization of sharp codes: $h^{(\tau+1)}(t)\leq 0$ case}\label{neg-derivative}

In this section we consider the max-min polarization of the sharp codes for the case of $h^{(\tau+1)}(t)\leq 0$. As Theorem \ref{PULB}, Case (ii) describes the quadrature nodes in this case are the zeros of the polynomials $(t-1)(t+1)^{1-\epsilon}P_{k-1+\epsilon}^{1,1-\epsilon}(t)$. Table 4 depicts the PULB quadratures for the sharp codes in this case. There are five codes, namely $(5,16,3)$, $(21,112,3)$, $(21,162,3)$, $(22,100,3)$ and $(22,891,5)$, along with the infinite family in the last row,
that do not attain the bound (their $N\rho_i$ are non-integer). Unlike the case of non-negative derivative case $h^{(\tau+1)}\geq 0$, 
where specialized analysis is needed for each code, here we are able to obtain that all other sharp codes attain the bound with a single theorem. 
Note that the presence of the inner product $1$ in the quadrature means that the optimality is attained with $x$ being any point of the code. 

We recall the definition of {\em strongly sharp codes} as sharp configurations that are even strength designs (see \cite[Definition 2.1]{B}), namely spherical $2k$-designs with $k$ distinct inner products, as only $k=1$ and $2$ are possible. We also shall refer to sharp configurations that 
are antipodal as {\em antipodal sharp codes}. The following theorem establishes the optimality of such codes in the case under consideration.

\begin{center}
\begin{table}
\scalebox{0.57}{
\begin{tabular}{|c|c|c|c|c|c|c|}
\hline
dim & Cardinality & Strength & Polarization bound \\ 
\noalign{\smallskip}\hline\noalign{\smallskip}
$n$ & $N$ & $\tau$ & $h^{(\tau+1)} \leq 0$ \\
\noalign{\smallskip}\hline\noalign{\smallskip}
$2$ & $N=2k$ & $2k-1$ & $h(-1) + 2\sum\limits_{j=1}^{k-1} h(\cos(2j\pi/N)) + h(1)$ \\ 
\noalign{\smallskip}\hline\noalign{\smallskip}
$2$ & $N=2k+1$ & $2k$ & $2\sum\limits_{j=1}^k h(\cos((2j-1)\pi/N))$ \\ 
\noalign{\smallskip}\hline\noalign{\smallskip}
$n$ & $N=2 \leq n$ & 1 & $(N/2)[h(-1) + h(1)]$ \\ 
\noalign{\smallskip}\hline\noalign{\smallskip}
$n$ & $n+1$ & 2 & $nh(-1/n) + h(1)$ \\ 
\noalign{\smallskip}\hline\noalign{\smallskip}
$n$ & $2n$ & 3 & $h(-1) + (2n-2)h(0) + h(1)$ \\ 
\noalign{\smallskip}\hline\noalign{\smallskip}
3 & 12 & 5 & $h(-1) + 5h(-1/\sqrt{5}) + 5h(1/\sqrt{5}) + h(1)$ \\ 
\noalign{\smallskip}\hline\noalign{\smallskip}
$5^{*}$ & 16 & 3 & $8[(1/5)h(-1) + (8/5)h(0) + (1/5)h(1)]$ \\ 
\noalign{\smallskip}\hline\noalign{\smallskip}
6 & 27 & 4 & $10h(-1/2) + 16h(1/4) + h(1)$ \\ 
\noalign{\smallskip}\hline\noalign{\smallskip}
7 & 56 & 5 & $h(-1) + 27h(-1/3) + 27h(1/3) + h(1)$ \\ 
\noalign{\smallskip}\hline\noalign{\smallskip}
8 & 240 & 7 & $h(-1) + 56h(-1/2) + 126h(0) + 56h(1/2) + h(1)$ \\ 
\noalign{\smallskip}\hline\noalign{\smallskip}
$21^{*}$ & 112 & 3 & $56[(1/21)h(-1) + (40/21)h(0) + (1/21)h(1)]$ \\ 
\noalign{\smallskip}\hline\noalign{\smallskip}
$21^{*}$ & 162 & 3 & $27[(1/7)h(-1) + (40/7)h(0) + (1/7)h(1)]$ \\ 
\noalign{\smallskip}\hline\noalign{\smallskip}
$22^{*}$ & 100 & 3 & $25[(1/11)h(-1) + (42/11)h(0) + (1/11)h(1)]$ \\ 
\noalign{\smallskip}\hline\noalign{\smallskip}
22 & 275 & 4 & $112h(-1/4) + 162h(1/6) + h(1)$ \\ 
\noalign{\smallskip}\hline\noalign{\smallskip}
$22^{*}$ & 891 & 5 & $81[(1/46)h(-1) + (126/23)h(-\sqrt{6}/12) + (126/23)h(-\sqrt{6}/12) + (1/46)h(1)]$ \\ 
\noalign{\smallskip}\hline\noalign{\smallskip}
23 & 552 & 5 & $h(-1) + 275h(-1/5) + 275h(1/5) + h(1)$ \\ 
\noalign{\smallskip}\hline\noalign{\smallskip}
23 & 4600 & 7 & $h(-1) + 891h(-1/3) + 2816h(0) + 891h(1/3) + h(1)$ \\ 
\noalign{\smallskip}\hline\noalign{\smallskip}
24 & 196560 & 11 & $h(-1) + 4600h(-1/2) + 47104h(-1/4) + 93150h(0) + 47104h(1/4) + 4600h(1/2) + h(1)$ \\
\noalign{\smallskip}\hline\noalign{\smallskip}
$q(q^3+1)/(q+1)^{*}$ & $(q^3+1)(q+1)$ & 3 & $(N/2)[(1/n)h(-1) + ((2n-2)/n)h(0) + (1/n)h(1)] $ \\
\noalign{\smallskip}\hline\noalign{\smallskip}
\end{tabular}
}
\bigskip
\caption{PULB Quadrature for sharp codes, non-positive case -- right-hand side of \eqref{PolarizationULB}. 
Codes that do not attain the PULB  \eqref{PolarizationULB} for Theorem \ref{PULB}, case (ii),  are indicated with *.}
\end{table}
\end{center}

\begin{theorem}\label{PULB_negative}
Let $h$ be a continuous potential on $[-1,1]$ with $h^{(\tau+1)}\leq 0$ on $[-1,1]$ and let $C$ be either a strongly sharp code or an antipodal sharp code. Then the minimum of $U_h(x,C)$ occurs at a point of the code $C$ and the code $C$ attains the PULB \eqref{PolarizationULB}.
\end{theorem}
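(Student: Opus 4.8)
The plan is to exploit that in case~(ii) of Theorem~\ref{PULB} the quadrature node set always contains the endpoint $t=1$, arising from the factor $(t-1)$ in $(t-1)(t+1)^{1-\epsilon}P_{k-1+\epsilon}^{1,1-\epsilon}(t)$; evaluating the potential at a point of the code itself then produces an inner product equal to one of the nodes. Since a strongly sharp code is a spherical $2k$-design (so $\tau=2k$, $\epsilon=1$) and an antipodal sharp code is a spherical $(2k-1)$-design (so $\tau=2k-1$, $\epsilon=0$), in either case $C$ is a $\tau$-design and Theorem~\ref{PULB}, case~(ii), applies, giving the lower bound $\mathcal{Q}_h(C)\ge N\sum_{i\in I}\rho_i h(\alpha_i)$. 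It remains to exhibit a point of $\mathbb{S}^{n-1}$ attaining it.

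The key tool is the remark following Definition~\ref{QuadRule}: for a $\tau$-design $C$ and \emph{any} $x\in\mathbb{S}^{n-1}$, the inner products $I(x,C)$ with their relative frequencies form a quadrature rule exact on $\mathcal{P}_\tau$. I would specialize this to $x\in C$. Because $C$ is sharp it has exactly $[(\tau+1)/2]=k$ distinct inner products among distinct points; adjoining $x\cdot x=1$ therefore yields exactly $k+1$ nodes. For a strongly sharp code this is a $(k+1)$-node rule, exact on $\mathcal{P}_{2k}$, with a prescribed node at the endpoint $1$. For an antipodal code, antipodality forces $-1$ to be one of the $k$ distinct inner products (as $-x\in C$ whenever $x\in C$), so the node set is $\{1,-1,s_1,\dots,s_{k-1}\}$, a $(k+1)$-node rule exact on $\mathcal{P}_{2k-1}$ with both endpoints $\pm1$ prescribed.

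The decisive step is uniqueness. A $(k+1)$-node quadrature with one prescribed endpoint attains the maximal degree of exactness $2(k+1)-2=2k$ only for the Gauss--Radau rule, and with both endpoints prescribed attains the maximal degree $2(k+1)-3=2k-1$ only for the Gauss--Lobatto rule. Since the rule induced by $x\in C$ realizes exactly these extremal degrees with exactly $k+1$ nodes and the required endpoint node(s), it must be the Gauss--Radau (resp.\ Gauss--Lobatto) rule whose nodes are the zeros of $(t-1)P_k^{(1,0)}(t)$ (resp.\ $(t-1)(t+1)P_{k-1}^{(1,1)}(t)$), which is precisely the quadrature of Theorem~\ref{PULB}, case~(ii). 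Hence the inner products of $C$ seen from any $x\in C$, together with their multiplicities, coincide with the nodes $\{\alpha_i\}_{i\in I}$ and weights $\{N\rho_i\}_{i\in I}$, so $U_h(x,C)=N\sum_{i\in I}\rho_i h(\alpha_i)$ for every $x\in C$. As this equals the lower bound above, the minimum of $U_h(\cdot,C)$ is attained at the points of $C$ and $C$ attains the PULB~\eqref{PolarizationULB}.

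The step I expect to be the main obstacle is this uniqueness/extremality argument: one must be sure the rule coming from a code point is genuinely the maximal-degree Gauss-type rule, and not some lower-degree rule exact on $\mathcal{P}_\tau$ that happens to use more nodes. This is where the defining property of sharpness enters crucially, pinning the node count at exactly $k+1$ so that the extremal characterization of Gauss--Radau and Gauss--Lobatto quadratures forces the desired coincidence.
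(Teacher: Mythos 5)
Your proof is correct, and while it shares the paper's overall strategy---establish the case (ii) lower bound, then show that every point of the code realizes the quadrature data, so that the code points are minima---the decisive identification step is carried out by a genuinely different argument. The paper re-derives admissibility via the Hermite interpolant at the case (ii) nodes (using the interpolation error formula and $h^{(\tau+1)}\le 0$) and then cites \cite[Theorem 3.1]{BDHSS}, i.e.\ the energy/Levenshtein machinery, for the fact that the distinct inner products of a strongly sharp or antipodal sharp code coincide with the quadrature nodes. You instead invoke Theorem \ref{PULB} directly for the lower bound and prove the node identification from scratch: by the remark following Definition \ref{QuadRule}, the inner products seen from a point $x\in C$, with their relative frequencies, form a quadrature rule exact on $\mathcal{P}_\tau$; sharpness caps the number of nodes at $k+1$, with $1$ (and $-1$ in the antipodal case) prescribed; and the extremal-degree characterization and uniqueness of Gauss--Radau (resp.\ Gauss--Lobatto) rules then force the nodes and weights to be exactly those of case (ii) of Theorem \ref{PULB}. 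What your route buys is self-containedness---no appeal to the energy ULB results of \cite{BDHSS}---plus, as a by-product, the distance-regularity statement that every code point sees all $k$ inner products with the correct multiplicities, which the paper imports wholesale. One small correction to your closing remark: since sharpness already caps the node count at $k+1$, the danger is not a rule with ``more nodes'' but one with fewer (some inner product might a priori fail to occur from the particular point $x$); this is excluded by your own counting, since an $m$-node rule with a node at an endpoint (resp.\ at both endpoints) can be exact at most to degree $2m-2$ (resp.\ $2m-3$), which is less than $\tau$ when $m\le k$. With that reading, your argument is complete.
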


\begin{proof}
Let $H_\tau (t;h)$ be the polynomial that interpolates the potential $h$ at the nodes $\alpha_i$, $i\in I$, and the derivative $h^\prime$ whenever $\alpha_i\in (-1,1)$. For antipodal sharp codes we have $\epsilon=0$. The Hermite interpolation error formula yields that
\[ f(t)-H_\tau (t;h)=\frac{h^{(\tau+1)}(\xi)}{(\tau+1)!}(t+1)(t-\alpha_1)^2\dots(t-\alpha_{k-1})^2(t-1) \geq 0, \quad t\in [-1,1]. \]
Similarly, for strongly sharp codes $\epsilon=1$ and the Hermite error formula is modified as follows
\[ f(t)-H_\tau (t;h)=\frac{h^{(\tau+1)}(\xi)}{(\tau+1)!}(t-\alpha_1)^2\dots(t-\alpha_{k})^2(t-1) \geq 0, \quad t\in [-1,1]. \]
Observe, that the distinct inner products between points of the code coincide with the quadrature nodes, which follows from \cite[Theorem 3.1]{BDHSS}. 
Clearly, any point of the code then can play the role of $\widetilde{x}$ in Theorem \ref{PULB}, which concludes the proof.
\end{proof}

In Sections \ref{T-designs} and \ref{sec-level2} we derived a generalization of the PULB \eqref{PolarizationULB} to $T$-designs and in particular 
a procedure Skip 1- Add 2 and what we refer to as second-level polarization. We proceed with a result for the 600-cell (Figure 4), 
that can be viewed (although the 600-cell is not a sharp code) through the same 
lens as a Skip 1-Add 4 and a third-level polarization. While we are able to determine the inner products between points
of the 600-cell and their frequency directly from the code, a third level ULB quadrature as in Table 1 may
be derived as in \cite[Section 5.2]{BDHSSMathComp}. In Theorem \ref{600} below we will see that the universal minimum is attained
at a point of the code, which leads to the universal polarization bound in the form shown in \eqref{600cell}.

For potentials continuous on $[-1,1]$ that have $h^{(i)}\geq 0$, $i=1,\dots,15$ and $h^{(16)}\leq 0$ we are able 
to adapt the proof of \cite[Theorem 5.1]{BDHSS_P} and derive the following result about the $600$-cell $C_{600}$. 
Recall that the inner product among points of the $600$-cell (no necessarily distinct) are given by
\[ B:=\left\{ -1, -\frac{1+\sqrt{5}}{4},-\frac{1}{2},\frac{1-\sqrt{5}}{4},0,\frac{\sqrt{5}-1}{4},\frac{1}{2}, \frac{1+\sqrt{5}}{4},1 \right\} =\{b_i\}_{i=1}^9.\]

\begin{figure}[htbp]
\centering
\includegraphics[width=3 in]{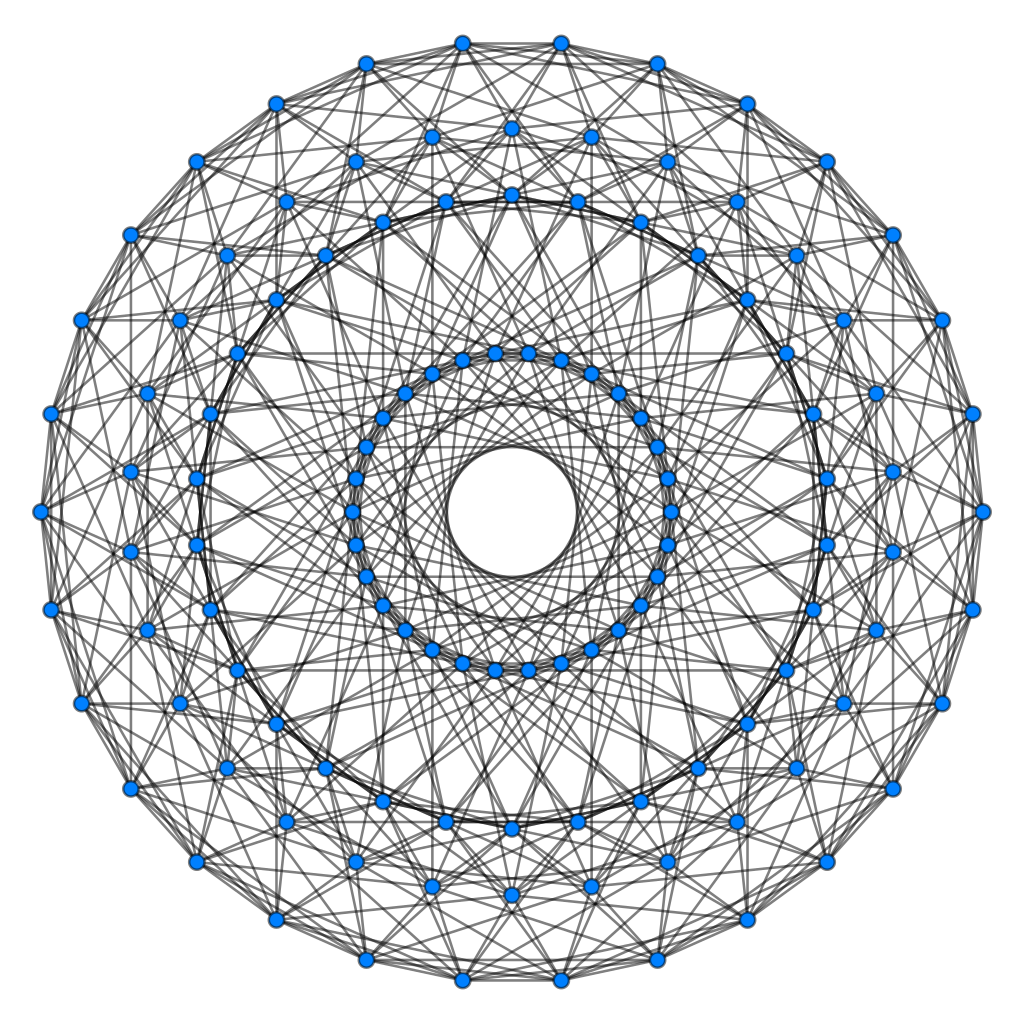}
\caption{ The $600$ cell (By Claudio Rocchini - Own work, CC BY 3.0, https://commons.wikimedia.org/w/index.php?curid=4481192)}
\label{fig:4}
\end{figure}

\begin{theorem}\label{600} 
Let $h$ be a continuous potential on $[-1,1]$ that has $h^{(i)}\geq 0$, $i=1,\dots,15$ and $h^{(16)}\leq 0$. Then the minimum of the discrete potential $U_h (x,C_{600})$ is attained at the points $y\in C_{600}$, i.e.
\begin{equation}\label{600cell} \mathcal{Q}_h (C_{600})=h(b_1)+12\sum_{j=1}^4h(b_{2j})+
20\sum_{i=0}^1 h(b_{4i+3})+30h(b_5) + h(b_9).\end{equation}
\end{theorem}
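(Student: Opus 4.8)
The plan is to realize the right-hand side of \eqref{600cell} as the Delsarte--Yudin lower bound of Proposition \ref{prop32} for a suitable index set $T$, and then to observe that this bound is attained precisely at the points of the code. First I would record the distance distribution: since $C_{600}$ is vertex-transitive and antipodal, fixing any $y\in C_{600}$ produces the nine inner products $\{b_i\}_{i=1}^9$ with frequencies $(1,12,20,12,30,12,20,12,1)$ (note $1+12\cdot 4+20\cdot 2+30+1=120=|C_{600}|$), so that $U_h(y,C_{600})$ equals the right-hand side of \eqref{600cell} for \emph{every} code point $y$. It therefore suffices to prove that this common value is a global lower bound for $U_h(\cdot,C_{600})$ on $\mathbb{S}^3$.

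To this end I would take the ``Skip $1$ -- Add $4$'' index set $T:=\{1,2,\dots,16\}\setminus\{12\}$. Being an antipodal spherical $11$-design, $C_{600}$ has $M_i^4(C_{600})=0$ for all odd $i$ and for even $i\le 10$; a direct evaluation on the inner-product distribution above shows $M_{14}^4(C_{600})=M_{16}^4(C_{600})=0$ while $M_{12}^4(C_{600})\neq 0$. Hence $C_{600}$ is a $T$-design and, by Proposition \ref{prop23}, $U_f(x,C_{600})=120\,f_0$ is constant for every $f\in\mathcal{P}_T=\mathcal{P}_{16}\cap\{P_{12}^{(4)}\}^\perp$; equivalently, the nodes $\{b_i\}_{i=1}^9$ with weights $\gamma_i:=n_i/120$ form a quadrature rule exact on $\mathcal{P}_T$, where $n_i$ denotes the $i$-th frequency.

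Next I would construct the required lower admissible polynomial $f\in\mathcal{L}(4,T,h)$ following the PULB-space recipe of Theorem \ref{P_T PULB-space}. Let $H_{15}(t;h)$ be the Hermite interpolant of $h$ matching $h$ at all nine nodes $b_i$ and $h'$ at the seven interior nodes (sixteen conditions, degree $15$). The Hermite error formula gives
\[
h(t)-H_{15}(t;h)=\frac{h^{(16)}(\xi)}{16!}\,(t^2-1)\prod_{j=2}^{8}(t-b_j)^2,\qquad \xi\in(-1,1),
\]
and since $(t^2-1)\le 0$ on $[-1,1]$ while $h^{(16)}\le 0$, this yields $H_{15}(\cdot\,;h)\le h$ with equality at the nodes. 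To land inside $\mathcal{P}_T$, I would then subtract the appropriate multiple of the annihilating polynomial $\omega(t):=(t^2-1)\prod_{j=2}^{8}(t-b_j)^2$ (of degree $16$, vanishing at all nine nodes with the required multiplicities), setting
\[
f:=H_{15}(\cdot\,;h)-\frac{h_{12}}{e_{12}}\,\omega,
\]
where $h_{12}$ and $e_{12}$ are the $P_{12}^{(4)}$-Gegenbauer coefficients of $H_{15}(\cdot\,;h)$ and of $\omega$, respectively. By construction $f$ has vanishing $P_{12}^{(4)}$-component, so $f\in\mathcal{P}_T$, and $f(b_i)=h(b_i)$ for every $i$ because $\omega$ vanishes at the nodes.

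The main obstacle is establishing $f\le h$ on $[-1,1]$ after this correction. As $\omega\le 0$ on $[-1,1]$, one has $f\le H_{15}(\cdot\,;h)\le h$ provided $h_{12}/e_{12}\le 0$; thus everything reduces to showing that the $P_{12}^{(4)}$-coefficient $h_{12}$ of the Hermite interpolant has sign opposite to the fixed, computable constant $e_{12}$. I would obtain this by expressing $h_{12}$ through the divided differences of $h$ weighted by the Gegenbauer coefficients of the partial products (as in Lemma \ref{PosDef} and the proof of Theorem \ref{P_T PULB-space}) and invoking the hypotheses $h^{(i)}\ge 0$ for $i=1,\dots,15$ together with $h^{(16)}\le 0$; this mixed-sign bookkeeping, which parallels the relaxation recorded after Theorem \ref{level-2-T}, is the delicate point. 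Once $f\le h$ is secured, $f\in\mathcal{L}(4,T,h)$ and Proposition \ref{prop32} gives $U_h(x,C_{600})\ge U_f(x,C_{600})=120\,f_0=\sum_{i=1}^{9}n_i\,h(b_i)$ for all $x\in\mathbb{S}^3$, which coincides with the value of $U_h$ at the code points computed above. Hence the minimum of $U_h(\cdot,C_{600})$ equals the right-hand side of \eqref{600cell} and is attained at every $y\in C_{600}$.
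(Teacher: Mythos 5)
Your construction is, step for step, the one in the paper's own proof of this theorem: your $H_{15}(\cdot\,;h)$, $\omega$, and $f$ are the paper's $g(t;h,I)$, $g_{16}$, and $H(t;h,I)$, and the quadrature/design facts you invoke (vanishing of all moments of order $\le 16$ except the twelfth) are the same. The genuine gap is exactly at the step you flag as delicate, and it cannot be closed the way you propose. Writing $H_{15}$ in Newton form with the partial products $u_j(t)=(t-t_1)\cdots(t-t_j)$ of the multiset $\{b_1,b_2,b_2,\dots,b_8,b_8,b_9\}$, one has $h_{12}=\sum_{j=12}^{15}h[t_1,\dots,t_{j+1}]\,(u_j)_{12}$: only divided differences of orders $12,\dots,15$ enter, and these are nonnegative because $h^{(i)}\ge 0$ for $i\le 15$; the hypothesis $h^{(16)}\le 0$ is invisible to $h_{12}$. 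The numbers $(u_j)_{12}$ are nonnegative (this is in effect the positivity of $(g(t;h,I))_{12}$ that the paper quotes from \cite{BDHSS_P}), and $e_{12}>0$ as well: since $\omega$ vanishes at every $b_i$ and is even, evaluating at a code point $w$ gives
\[
0=\sum_{y\in C_{600}}\omega(w\cdot y)=120\,\omega_0+e_{12}\,\frac{M_{12}^4(C_{600})}{120},
\]
with $\omega_0=\int_{-1}^1\omega\,d\mu_4<0$ and $M_{12}^4(C_{600})>0$, whence $e_{12}>0$. So the hypotheses force $h_{12}/e_{12}\ge 0$, the \emph{opposite} of the sign condition your argument needs; no mixed-sign bookkeeping can produce $h_{12}\le 0$.

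The failure is concrete: $h(t)=(1+t)^{15}$ satisfies every hypothesis (with $h^{(16)}\equiv 0$), its Hermite interpolant is $h$ itself, and $h_{12}>0$, so your $f=h-(h_{12}/e_{12})\,\omega$ lies \emph{above} $h$ off the nodes and is not lower admissible. Indeed, for this $h$ the design property gives $U_h(x,C_{600})=120h_0+h_{12}\Psi(x)$ with $\Psi(x):=\sum_{y\in C_{600}}P_{12}^{(4)}(x\cdot y)$, a nonzero function of zero mean on $\mathbb{S}^3$ which is positive at code points; hence $\inf_x U_h(x,C_{600})<U_h(w,C_{600})$ for $w\in C_{600}$, so for this admissible $h$ even the asserted equality \eqref{600cell} breaks down, which shows the obstruction is not an artifact of your particular interpolation scheme. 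For comparison, the paper's proof handles this step by citing \cite{BDHSS_P} for the positivity of \emph{both} twelfth Gegenbauer coefficients and then asserting $H(t;h,I)\le h(t)$; your reduction makes explicit that this same-sign situation is precisely the problematic one, since with $c:=h_{12}/e_{12}>0$ the correction $-c\,\omega$ is nonnegative on $[-1,1]$ and the desired inequality $f\le h$ would additionally require the pointwise bound $c\le |h^{(16)}(\xi_t)|/16!$, a quantitative condition relating $h^{(12)},\dots,h^{(15)}$ to $|h^{(16)}|$ that the stated hypotheses do not supply. In short, your proposal faithfully reproduces the paper's strategy, but the step you defer is not merely delicate: under the stated hypotheses it is unattainable.
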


\begin{proof} The proof follows very closely the layout of the proof of Theorem 5.1 in \cite{BDHSS_P}. Introduce the multi-set 
\[I=\{ t_j \}_{j=1}^{16}:=\{b_1,b_2,b_2,b_3,b_3,\dots,b_8,b_8,b_9\}, \]
and consider the partial products 
\[ g_j(t):=(t-t_1)\cdots(t-t_j), \ j=1,\dots, 16,\ g_0(t):=1.\]
Denote with $g(t;h,I)$ the interpolating polynomial of $h$ at the nodes of $I$. The Newton interpolating formula yields that
\begin{equation}\label{NewtonFormula600cell} g(t;h,I)=\sum_{j=0}^{15} h[t_1,\dots,t_{j+1}]g_j(t) ,\end{equation}
where $h[t_1,\dots,t_{j+1}]$ denotes the divided difference of $h$ in the listed nodes.
Note that all of the divided differences of $h(t)$ in \eqref{NewtonFormula600cell} are nonnegative because 
$h[t_1,\dots,t_{j+1}]=h^{(j)}(\xi)/j!  \geq 0$. The Hermite interpolation error formula and $h^{(16)}\leq 0$ show that (recall that $b_9=1$)
\begin{equation}\label{HermiteError600cell}
h(t)-  g(t;h) = \frac{h^{(16)}(\xi)}{16!}(t-b_1)(t-b_2)^2 \dots (t-b_8)^2(t-b_9)\geq 0. \quad t\in [-1,1].
\nonumber
\end{equation}
In \cite[Theorem 5.1]{BDHSS_P} it was shown that the $12$-th Gegenbauer coefficients of $g(t;h,I)$ and $g_{16}$ are positive, so the polynomial
\[H(t;h,I):=g(t;h,I)-\frac{(g(t;h,I))_{12}}{(g_{16})_{12}} g_{16}(t) = \sum_{i=0}^{16} H_{i}P_i^{(4)}(t),\]
interpolates $h$ at the points of $I$, $H_{12}=0$, and $H(t;h,I)\leq h(t)$. This implies that
\begin{eqnarray*}
U_h(z,C_{600}) &=& \sum_{y\in C_0}h( z\cdot y)\geq \sum_{y\in C_0} H(z\cdot y;h) \\
&=& 120H_0=\sum_{y\in C_0} H(w\cdot y;h)= \sum_{y\in C_0}h( w\cdot y) \\
&=& h(b_1)+12\sum_{j=1}^4h(b_{2j})+20\sum_{i=0}^1 h(b_{4i+3})+30h(b_5) + h(b_9).
\end{eqnarray*}
That the only minima are points of the code follows analogously to \cite{BDHSS_P}.
\end{proof}

The minima of the discrete potential induced by the $600$-cell for absolutely monotone potentials (or $h^{(16)}\geq0$) is a much more complex problem that will be addressed in a subsequent work.

\noindent{\bf Acknowledgment.}  This material is based upon work supported by the National Science Foundation under Grant No. DMS-1929284 while the authors were in residence at the Institute for Computational and Experimental Research in Mathematics in Providence, RI, during the {\it Colaborate@ICERM} program.  The research of the first author was supported, in part, by Ministry of Education and Science of Bulgaria under Grant no. DO1-387/18.12.2020 “National Centre for High-Performance and Distributed Computing”. The research of the second author was supported, in part, by NSF grant DMS-1936543.
The research of the fifth author was supported, in part, by Bulgarian NSF grant KP-06-N32/2-2019. 

\end{document}